\newtheorem{theorem}{Theorem}
\newtheorem{lemma}[theorem]{Lemma}
\newtheorem{proposition}[theorem]{Proposition}
\newtheorem{corollary}[theorem]{Corollary}
\newtheorem{definition}[theorem]{Definition}
\newtheorem{example}[theorem]{Example}
\newtheorem{remark}[theorem]{Remark}
\newtheorem*{exercise}{Exercise}
\newenvironment{indented}{%
\par
\begingroup
\small
\it
\leftskip2em
\rightskip\leftskip
}{%
\par
\endgroup
}
\newcommand{\define}[1]{\emph{#1}}
\newcommand{\R}{\mathbb{R}}
\newcommand{\Z}{\mathbb{Z}}
\newcommand{\T}{\mathbb{T}}
\renewcommand{\S}{\mathbb{S}}
\newcommand{\F}{\mathscr{F}}
\DeclareMathOperator{\Div}{div}
\DeclareMathOperator{\SO}{SO}
\DeclareMathOperator{\SU}{SU}
\DeclareMathOperator{\Lie}{\mathcal{L}}
\newcommand{\ocan}{{\omega_{\mathrm{can}}}}
\newcommand{\xiOT}{\xi_\mathrm{\scriptscriptstyle OT}}
\begin{document}


\thispagestyle{empty}
\begin{flushright}
  {\huge 	\textsc{
  \begin{tabular}{r}
		\hbox{Topological methods in}\\
3--dimensional contact geometry
\end{tabular}
}

\vspace{.5cm}
\large
An illustrated introduction to Giroux's convex surfaces theory
}
\end{flushright}

\vspace{1.5cm}

\begin{center}

\hspace*{-.5cm}
\hbox{\includegraphics[width=14cm]{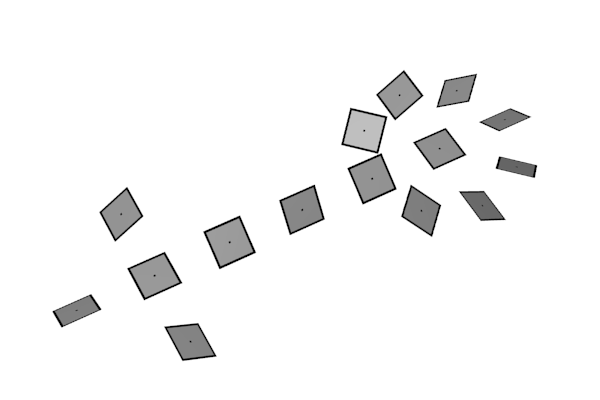}}

\vspace{1.5cm}
\Large
Patrick \textsc{Massot}

\vspace{1.5cm}
\large
Notes for the Nantes summer school in 

Contact and Symplectic Topology

June 2011

Revised in February 2013
\end{center}

\newpage

\chapter*{Introduction}

These lecture notes are an introduction to the study of global properties
of contact structures on 3-manifolds using topological rather than analytical
methods. From that perspective, the main tool to study a contact manifold 
$(V, \xi)$ is the study of its $\xi$-convex surfaces. These surfaces
embedded in $V$ are useful because all the information about $\xi$ near
each of them is encoded into a surprisingly small combinatorial data.  In order
to illustrate the power of $\xi$-convex surfaces without long developments, we
use them to reprove, following Giroux \cite{Giroux_2000}, two important
theorems which were originally proved using different techniques by Bennequin
\cite{Bennequin} and Eliashberg \cite{Eliashberg_20_ans}.

Besides Giroux's original papers \cite{Giroux_91, Giroux_2000}, there are
already two sets of lectures notes by Etnyre \cite{JohnNotes} and Honda
\cite{KoNotes} and a book by Geiges \cite{GeigesBook} which cover almost all
topics we will discuss as well as more advanced topics.
Our goal is not to replace those references but to complement them.
Mostly, we include many pictures that are not easily found in print and
can help to build intuition. We focus on a small set of contact manifolds and
illustrate all phenomena on those examples by showing explicit embedded
surfaces. On the other hand, we almost never give complete proofs.

Chapter 1 explains the local theory of contact structures starting with the
most basic definitions. There are many ways to define contact structures and
contact forms and we use unusual geometric definitions in order to complement
existing sources. We also try to explain the geometric intuition behind
the theorems of Darboux-Pfaff and Gray rather than using Moser's path method
without explanation.

Once enough definitions are given, an interlude states the theorems of Bennequin
and Eliashberg that are proved at the high point of these notes. It serves as
motivation for the rather long developments of Chapter 2.

Chapter 2 begins the study of surfaces in contact manifolds. The starting point
is the singular foliation printed by a contact structure on any surface. We
then work towards $\xi$-convex surfaces theory by simplifying gradually the
contact condition near a surface. Once the amazing realization lemma is proved,
we investigate obstructions to $\xi$-convexity and prove these obstructions are
generically not present. The last section of this chapter then get the first
fruits of this study by proving the Eliashberg-Bennequin inequalities.

Chapter 3 goes beyond the study of a single surface by studying some
one-parameter families of surfaces. In particular we describe what happens
exactly when one of the obstructions to $\xi$-convexity discussed in the
preceding chapter arises. This allows us to prove the theorems of Bennequin and
Eliashberg mentioned above. Until now, the proof of Bennequin's theorem using
$\xi$-convex surfaces was explained only in \cite{Giroux_2000}.

Of course this is only the beginning of a story which continues both by itself
and in combination with holomorphic curves techniques.

\paragraph{Conventions:}

A plane field $\xi$ on a 3--manifold $V$ is a (smooth) map associating to each
point $p$ of $V$ a 2-dimensional subspace $\xi(p)$ of $T_pV$. All plane fields
considered here will be coorientable, it means one can continuously choose one
of the half spaces cut out by $\xi(p)$ in $T_pV$. In this situation, $\xi$ can
be defined as the kernel of some nowhere vanishing 1--form $\alpha$:
$\xi(p) = \ker \alpha(p)$. The coorientation is given by the sign of $\alpha$.
We will always assume that $V$ is oriented. In this situation a coorientation of
$\xi$ combines with the ambient orientation to give an orientation on $\xi$.
All contact structures in these notes will be cooriented.

Occasionally, we will include remarks or comments that are not part of the main
flow of explanations. These remarks are typeset in small italic print.
\chapter{Local theory}

\section{Contact structures as rotating plane fields}

\subsection{The canonical contact structure on the space of contact
elements}

Let $S$ be a surface and $\pi: ST^*S \to S$ the bundle of cooriented lines
tangent to $S$ (also called contact elements for $S$). It can be seen as the
bundle of rays in $T^*S$, hence the notation. The canonical contact structure
on $ST^*S$ at a point $d$ is defined as the inverse image under $\pi_*$ of $d
\subset T_{\pi(d)}S$, see Figure~\ref{fig:xi_S}.
\begin{figure}
  \begin{center}
	\parbox{3.9cm}{\includegraphics[width=3.9cm]{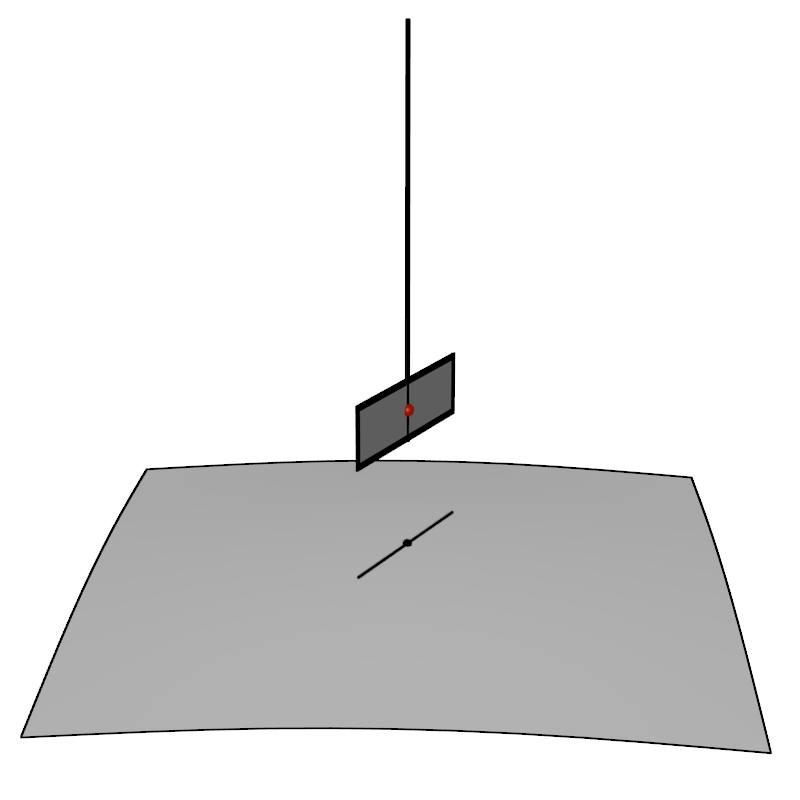}}
	\parbox{3.9cm}{\includegraphics[width=3.9cm]{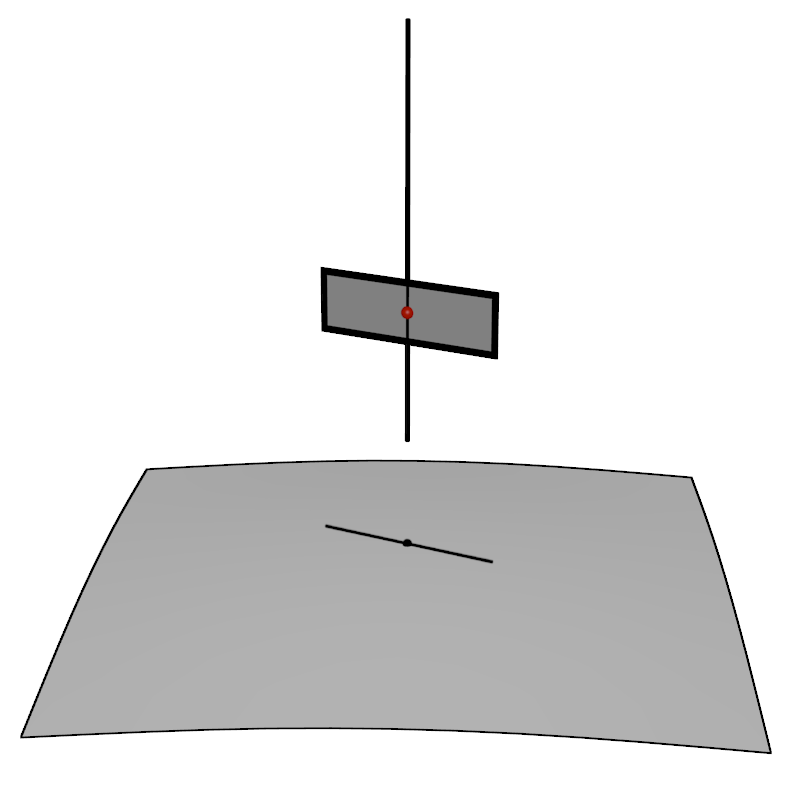}}
	\parbox{3.9cm}{\includegraphics[width=3.9cm]{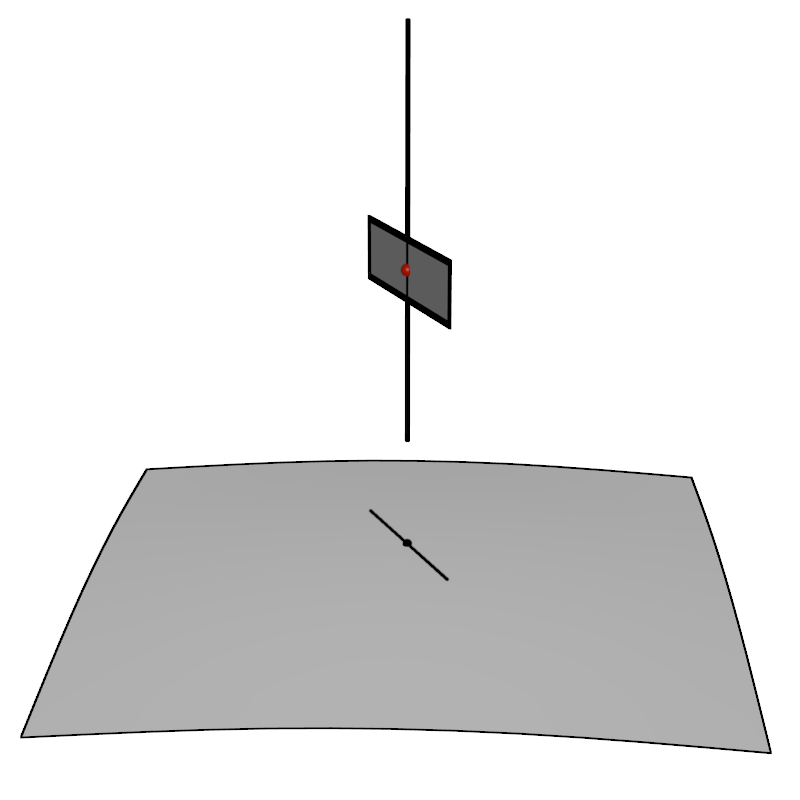}}
  \end{center}
  \caption{Canonical contact structure on the bundle of cooriented lines. At
	bottom is a portion of $S$ with a tangent line at some point. Above that point
	one gets the fiber by gluing top and bottom of the interval. The contact
	structure is shown at the point of the fiber corresponding to the line drawn below.}
  \label{fig:xi_S}
\end{figure}

Suppose first that $S$ is the torus $T^2 = \R^2/2\pi\Z^2$. 
Let $x$ and $y$ be the canonical $\S^1$-valued coordinates 
on $T^2$. A cooriented line tangent to $T^2$ at some point $(x, y)$ can
be seen as the kernel of a 1--form $\lambda$ which has unit norm with
respect to the canonical flat metric. So there is some angle $z$ such that
$\lambda = \cos(z)dx - \sin(z) dy$. Hence we have a natural identification of
$ST^*T^2$ with $T^3$. In addition the canonical contact structure can be
defined by $\cos(z)dx - \sin(z) dy$ now seen as a 1--form on $T^3$ called the
canonical contact form on $T^3$, see Figure~\ref{fig:xi_t3}.
\begin{figure}
  \begin{center}
		\includegraphics[width=8cm]{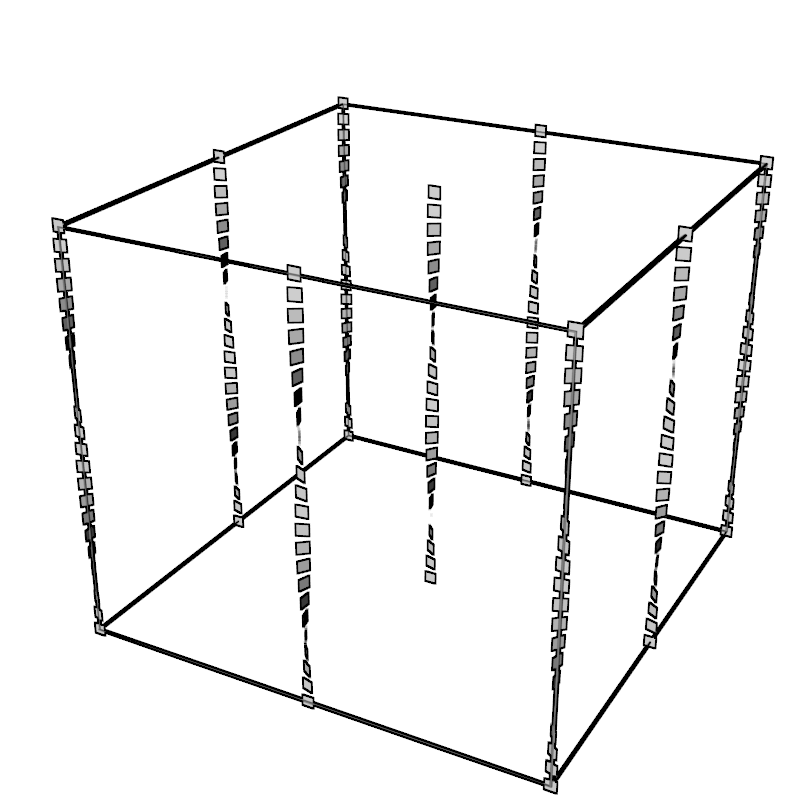}
  \end{center}
  \caption{Canonical contact structure on $T^3$. Opposite faces of the cube
	are glued to get $T^3$}
  \label{fig:xi_t3}
\end{figure}

When $S$ is the sphere $\S^2$, $ST^*S$ is endowed with a free transitive action
of $\SO_3(\R)$ so it is diffeomorphic to $\SO_3(\R)$. So there is a two-fold
covering map from $\S^3 \simeq \SU(2)$ to $ST^*\S^2$. The lifted plane field is
called the canonical contact structure on $\S^3$. We will see different ways of
describing this example later on.

\subsection{Contact structures and contact forms}

\begin{definition}
A \define{contact structure} on a 3--manifold is a plane field which is locally
diffeomorphic to the canonical contact structure on $ST^*T^2$.
A \define{contact form} is a 1--form whose kernel is a contact structure.
A curve or a vector field is \define{Legendrian} if it is tangent to a given
contact structure.
\end{definition}

As noted above all our manifolds will be oriented and diffeomorphisms in the
above definition shall preserve orientations.

\begin{theorem}[Darboux--Pfaff theorem]
\label{thm:Darboux}
A 1--form $\alpha$ is a contact form if and only if $\alpha \wedge d\alpha$ is a
positive volume form.
\end{theorem}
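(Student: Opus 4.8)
The plan is to prove the two implications separately. The forward direction — that a contact form satisfies $\alpha\wedge d\alpha>0$ — is essentially a naturality computation, whereas the converse — that a form with $\alpha\wedge d\alpha>0$ has contact kernel — is the substantive half and is exactly Darboux's local normal form theorem. I expect all the real difficulty to live in the converse.

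\emph{Forward direction.} First I would compute the model. For the canonical form $\alpha_0=\cos(z)\,dx-\sin(z)\,dy$ on $T^3$ one has $d\alpha_0=-\sin(z)\,dz\wedge dx-\cos(z)\,dz\wedge dy$, and a short expansion gives $\alpha_0\wedge d\alpha_0=dx\wedge dy\wedge dz$, a positive volume form. Next I would record two naturality properties: $(i)$ replacing $\alpha$ by $f\alpha$ with $f$ nowhere zero multiplies $\alpha\wedge d\alpha$ by $f^2>0$, since $(f\alpha)\wedge d(f\alpha)=f^2\,\alpha\wedge d\alpha$; and $(ii)$ for any diffeomorphism $\psi$ one has $(\psi^*\alpha)\wedge d(\psi^*\alpha)=\psi^*(\alpha\wedge d\alpha)$, and an orientation-preserving $\psi$ pulls a positive volume form back to a positive volume form. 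If $\alpha$ is a contact form then near each point there is an orientation-preserving diffeomorphism $\psi$ onto an open set of $T^3$ with $\ker\alpha=\ker(\psi^*\alpha_0)$, hence $\alpha=f\,\psi^*\alpha_0$ for some nowhere-zero $f$; combining the model computation with $(i)$ and $(ii)$ yields $\alpha\wedge d\alpha=f^2\,\psi^*(dx\wedge dy\wedge dz)>0$.

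\emph{Converse direction.} Assume $\alpha\wedge d\alpha$ is a positive volume form and fix a point $p$. I would straighten $\alpha$ using its Reeb field $R$, defined by $\alpha(R)=1$ and $\iota_R\,d\alpha=0$; this $R$ exists because $\alpha\wedge d\alpha\neq 0$ forces $d\alpha$ to have one-dimensional kernel transverse to $\ker\alpha$. Since $\mathcal{L}_R\alpha=\iota_R\,d\alpha+d(\iota_R\alpha)=0$, the form $\alpha$ is invariant under the Reeb flow, so choosing a small disk $D$ transverse to $R$ at $p$ and using the flow to define a coordinate $t$ gives coordinates $(t,u,v)$ in which $\alpha=dt+\lambda$, where $\lambda$ is a $t$-independent $1$-form on $D$. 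Then $\alpha\wedge d\alpha=dt\wedge d\lambda$, so positivity says precisely that $d\lambda$ is a positively oriented area form on $D$. The problem is now two-dimensional: a constructive normalization of the area form (integrating one first-order ODE in the fibre direction) produces coordinates with $d\lambda=du\wedge dv$, and the Poincaré lemma then writes $\lambda=u\,dv+dg$; absorbing $dg$ into $t$ brings $\alpha$ to the standard model $dz+x\,dy$. To match the paper's definition I would finally note that the canonical structure itself is of this shape: the map $(x,y,z)\mapsto(-\tan z,\,y,\,x)$ is orientation-preserving near $z=0$, with Jacobian $\sec^2 z$, and carries $\ker\alpha_0$ to $\ker(dz+x\,dy)$, since $\cos(z)\,dx-\sin(z)\,dy=\cos(z)\,(dx-\tan(z)\,dy)$. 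Composing, $\ker\alpha$ is orientation-preservingly diffeomorphic to $\xi_{\mathrm{can}}$.

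The main obstacle is the converse, and within it the delicate point is orientation bookkeeping: each coordinate change above — the Reeb flow box, the area-form normalization, and the final identification — must be verified to be orientation-preserving, so that the resulting local model is genuinely $\xi_{\mathrm{can}}$ and not its image under an orientation-reversing map. The positivity hypothesis $\alpha\wedge d\alpha>0$ is exactly what pins down the correct sign at each stage (for instance it forces $d\lambda$ to be a \emph{positive} area form on $D$). I would stress that this route is entirely constructive, its only analytic inputs being the integration of the Reeb field and the solution of one ODE to normalize the area form, which realizes the paper's stated preference for transparent geometric reasoning over an unexplained appeal to Moser's path method.
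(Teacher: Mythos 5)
Your argument is correct, and the substantive half --- the converse --- follows a genuinely different route from the paper's. The paper never leaves the tangential picture: it builds coordinates by flowing first along a Legendrian field $X$ inside a transversal surface $S$, then along a Legendrian field $V$ transverse to $S$, writes the (rescaled) form as $\cos z(x,y,t)\,dx - \sin z(x,y,t)\,dy$, and observes that the contact condition reads $\frac{\partial z}{\partial t} > 0$, so the implicit function theorem promotes the angle $z$ to a coordinate and one lands directly on the $ST^*T^2$ model. You instead straighten the \emph{transverse} direction with the Reeb field $R$ (so $\alpha = dt + \lambda$ with $\lambda$ independent of $t$, by $\Lie_R\alpha = 0$), reduce to the two-dimensional fact that the positive area form $d\lambda$ can be normalized by a single integration, and finish with the Poincar\'e lemma; this is the classical flow-box proof, and it yields a strictly stronger conclusion --- a normal form $dz + x\,dy$ for the contact \emph{form} itself, not merely for its kernel, whereas the paper's argument rescales $\alpha$ at the angle-form step and only normalizes the plane field. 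The price is that, with the paper's definition of a contact structure via $ST^*T^2$, you need the extra identification of $\ker(dz + x\,dy)$ with $\ker(\cos z\,dx - \sin z\,dy)$, which you correctly supply via the orientation-preserving map $(x,y,z)\mapsto(-\tan z,\,y,\,x)$ with Jacobian $\sec^2 z$ (consistent with the paper's own later remark that $\ker(dt + p\,dq)$ is a diffeomorphic image of the canonical structure). What the paper's route buys instead is the geometric moral exploited throughout the notes --- the contact structure \emph{rotates} along Legendrian flows, a picture reused in Gray's theorem, the reconstruction lemmas, and the crossing lemma --- which your Reeb-flow proof deliberately bypasses; both routes honor the stated aim of avoiding an unexplained appeal to Moser's method, and your orientation bookkeeping, which you rightly single out as the delicate point, checks out at every stage.
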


Let $\xi$ be the kernel of $\alpha$. The condition $\alpha \wedge d\alpha > 0$
will henceforth be called the contact condition for $\alpha$.  It is equivalent
to the requirement that $d\alpha_{|\xi}$ is non-degenerate and defines the
orientation of $\xi$ coming from the orientation of the ambient manifold and the
coorientation of $\xi$.

\begin{proof}
If $\xi$ is a contact
structure then the image of $\alpha$ in the local model is $f\alpha_0$ where $f$
is some nowhere vanishing function and $\alpha_0 = \cos(z)dx -\sin(z) dy$. 
So 
\begin{align*}
\alpha \wedge d\alpha &= f\alpha_0 \wedge (fd\alpha_0 + df \wedge \alpha_0)
= f^2 \alpha_0 \wedge d\alpha_0 \\
&= f^2\, dx \wedge dy \wedge dz
\end{align*}
which is a positive volume form. More generally the above computation proves
that the contact condition for a nowhere vanishing one-form depends only on its
kernel.

Conversely, suppose $\alpha \wedge d\alpha$ is positive.
Let $p$ be a point in $M$. We want to construct a coordinate chart around $p$
such that $\xi = \ker(\cos(z)dx -\sin(z) dy)$. We first choose a small surface $S$
containing $p$ and transverse to $\xi$. Then we pick a
non-singular vector field $X$ tangent to $S$ and $\xi$ near $p$ and a small
curve $c$ in $S$ containing $p$ and transverse to $X$, see
Figure~\ref{fig:darboux}.
\begin{figure}[htp]
	\begin{center}
	\parbox{5cm}{ \includegraphics[width=5cm]{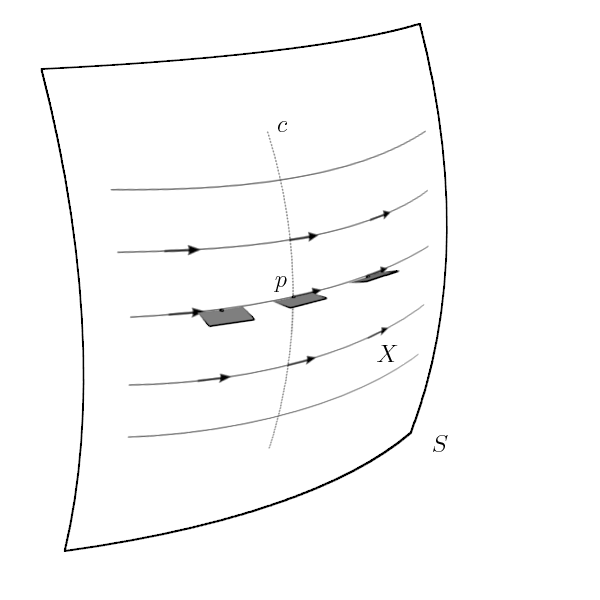}}
	\parbox{5cm}{ \includegraphics[width=5cm]{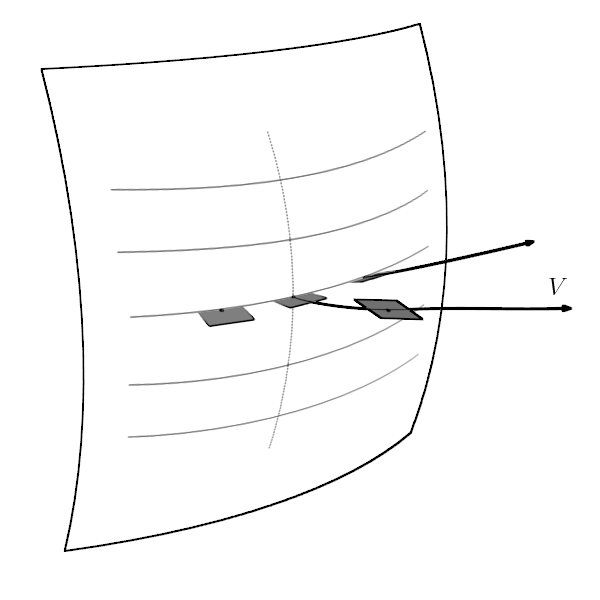}}
	\end{center}
	\caption{Proof of the Darboux--Pfaff theorem}
	\label{fig:darboux}
\end{figure}
Let $y$ be a coordinate on $c$. The flow of $X$ at time $x$ starting from $c$
gives coordinates $(x, y)$ on $S$ near $p$ in which $X = \partial_x$.

We now consider a vector field $V$ transverse to $S$ and tangent to $\xi$. The
flow of $V$ at time $t$ starting from $S$ gives coordinates $(x, y, t)$ near $p$
such that $\alpha = f(x, y, t) dx + g(x, y, t) dy$ because 
$\alpha(\partial_t) = \alpha(V) = 0$. Up to rescaling, one can use instead
$\alpha_1 = \cos z(x, y, t) dx - \sin z(x, y, t) dy$ for some function $z$
such that $z(x,y,0) = 0$.
Now it is time to use the contact condition.  We can compute
\[
\alpha_1 \wedge d\alpha_1 = \frac{\partial z}{\partial t} 
dx \wedge dy \wedge dt.
\]
Remember the contact condition for $\alpha$ is equivalent to the contact
condition for $\alpha_1$. So $\frac{\partial z}{\partial t}$ is positive and
the implicit function theorem then guaranties that we can use $z$ as a coordinate
instead of $t$.
\end{proof}

In the above proof, $z(x, y, t)$ was the angle between $\xi$ and
the horizontal $\partial_x$ is the plane normal to the Legendrian vector field
$\partial_t$.  We saw that the contact condition forces this angle to
increase. This means that the contact structure rotates around $\partial_t$.
The above proof essentially says that this rotation along Legendrian vector
fields characterizes contact structures.

We now focus on the difference between contact structures and contact forms.
The data of a contact form is equivalent to a contact structure and either a
choice of a Reeb vector field or a section of its symplectization.

\begin{definition}
A Reeb vector field for a contact structure $\xi$ is a vector field which is
transverse to $\xi$ and whose flow preserves $\xi$.
\end{definition}

If one has a Riemannian metric on a surface $S$ then the bundle of contact
elements of $S$ can be identified with the unit tangent bundle $STS$ and the
geodesic flow is then the flow of a Reeb vector field for the canonical contact
structure.

One can easily prove that each contact form $\alpha$ comes with a
canonical Reeb vector field $R_\alpha$ which is characterized by 
$d\alpha(R_\alpha, \cdot) = 0$ and $\alpha(R_\alpha) = 1$. All Reeb vector
fields arise this way.

Next, for any co-oriented plane field $\xi$ on a 3-manifold
$V$, one can consider the annihilator of $\xi$ in $T^*V$:
\begin{equation*}
  S_\xi := \bigl\{ \lambda \in T^*V\bigm|\,
  \text{$\ker \lambda = \xi$ and $\lambda (v) > 0$
    if $v$ is positively transverse to $\xi$}\bigr\} \;.
\end{equation*}
It is a good exercise to check that a plane field 
$\xi$ on $V$ is a contact structure if and only if $S_\xi$ is a
symplectic submanifold of $(T^*V, \ocan)$. In this case $S_\xi$ is
called the \emph{symplectization} of $\xi$.
The manifold $S_\xi$ is a principal $\R$--bundle where a real number $t$ acts by
$\lambda \mapsto e^t\lambda$.
Any contact form $\alpha$ is a section of this $\R$-bundle, and thus
determines a trivialization $\R \times V \to S_\xi$ given by
$(t, v) \mapsto e^t\alpha_v$. 
In this trivialization, the restriction of the canonical symplectic form
$\ocan$ becomes $d(e^t\alpha)$.

\section{Examples}

\subsubsection*{The canonical contact structure on $\R^3$}

The universal cover of $ST^*T^2$ is of course $\R^3$ and the lifted contact
structure is $\xi_0 = \ker \big(\cos(z)dx - \sin(z) dy\big)$ 
where $x$, $y$ and $z$ are now honest real-valued coordinates. The plane field 
$\xi_0$ is called the standard contact structure on $\R^3$. 
\begin{figure}[htp]
  \begin{center}
	\includegraphics[width=1cm]{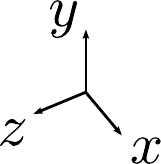}
	\includegraphics[width=10cm]{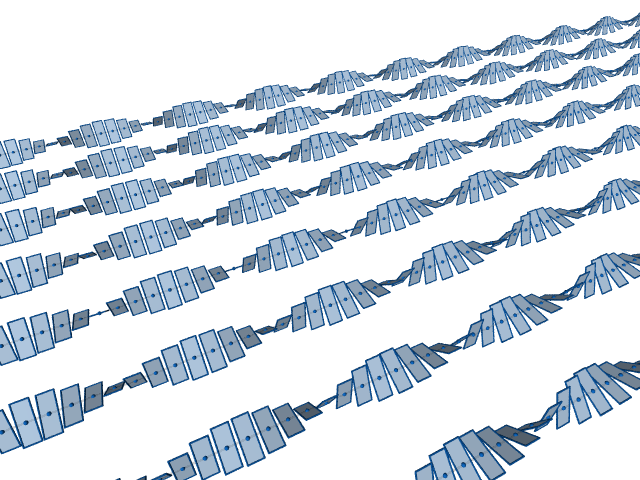}
  \end{center}
  \caption{Universal cover of the standard  contact structure on $\T^3$ seen
	from the side. It is invariant under translation in the vertical direction}
  \label{fig:std3}
\end{figure}

Depending on context, it can be useful to have different ways of looking at
$\xi_0$ using various diffeomorphisms of $\R^3$. The image of
$\xi_0$ under the diffeomorphism 
\[
\begin{pmatrix}
	x\\y\\z
\end{pmatrix} \mapsto
\begin{pmatrix}
t\\ p \\q 
\end{pmatrix} =
\begin{pmatrix}
\cos(z) & -\sin(z) & 0\\
\sin(z) & \cos(z) & 0\\
0 & 0 & 1
\end{pmatrix}
\begin{pmatrix}
x\\ y\\ z
\end{pmatrix}
\]
is drawn in Figure~\ref{fig:std1}. It admits the contact form $dt + pdq$ and
arises naturally on $\R^3$ seen as the space of 1-jets of functions from $\R$
to $\R$ (see e.g. \cite[Example 2.5.11]{GeigesBook} for more information on
this interpretation).
\begin{figure}[!h]
	\begin{center}
		\includegraphics[width=1cm]{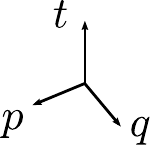}
	\includegraphics[height=6cm]{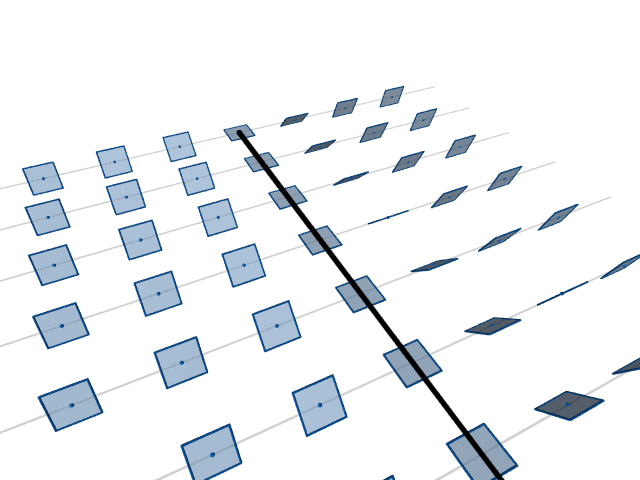}
	\end{center}
	\caption{$\ker(dt + pdq)$ on $\R^3$. It is invariant under
	translation in the vertical direction. It becomes vertical only if one goes
	all the way to $p = \pm\infty$.}
	\label{fig:std1}
\end{figure}

Figures~\ref{fig:std3} and~\ref{fig:std1} together are often confusing for
beginners.
First the thick black line $\{t = p = 0\}$ in Figure~\ref{fig:std1} is
Legendrian yet the contact structure does not seem to rotate along it. Second,
it seems the two pictures exhibit Legendrian foliations by lines with very
different behavior. In the second picture the contact structure turns half a
turn along each leave whereas it turns infinitely many turns in the first
picture.

Both puzzles are solved by the same picture. The diffeomorphism we used above
sends the foliation by Legendrian lines of Figure~\ref{fig:std3} to a foliation
containing the mysterious line $\{t = p = 0\}$ in Figure~\ref{fig:std1}
together with helices around that line, see Figure~\ref{fig:lien13}.
\begin{figure}[ht]
	\begin{center}
	\includegraphics[height=4cm]{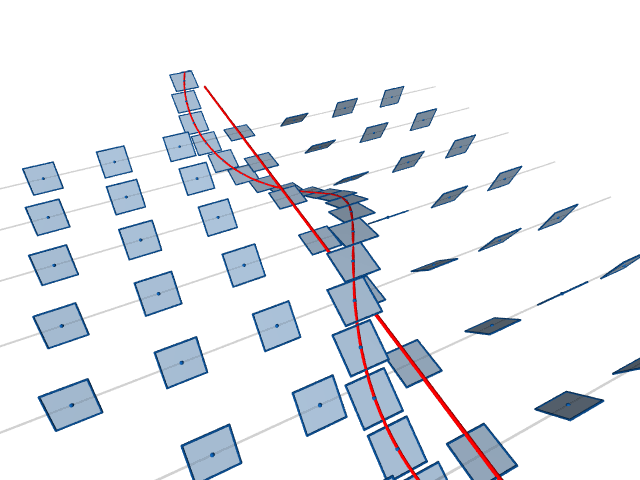}
	\includegraphics[height=4cm]{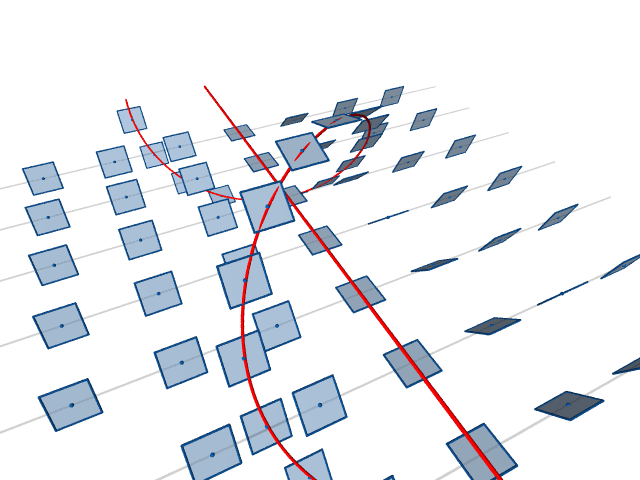}
	\end{center}
	\caption{The mysterious line in Figure \ref{fig:std1} together with two
	helices coming from the lines of Figure \ref{fig:std3}.}
	\label{fig:lien13}
\end{figure}

So we first see where is the foliation of Figure~\ref{fig:std3} inside
Figure~\ref{fig:std1}. And second we remember that it makes sense to say that a
plane field rotates along a curve only compared to something else. Contact
structures rotate along Legendrian curves compared to neighborhood leaves of
some Legendrian foliation. And indeed we see the contact structure turns
infinitely many times along the mysterious line compared to the nearby
Legendrian helices.

It is also sometimes convenient to consider the image of $\ker(dt + pdq)$ under the
diffeomorphism $(t, p, q) \mapsto (q, -p, t + \frac{pq}2)$. This image is the
kernel of $dz + \frac{1}{2} r^2 d\theta$ in cylindrical coordinates, see Figure
\ref{fig:std2}. In this model, one sees clearly that, at each point, there are
Legendrian curves going in every possible direction.
\begin{figure}
  \begin{center}
	\includegraphics[width=10cm]{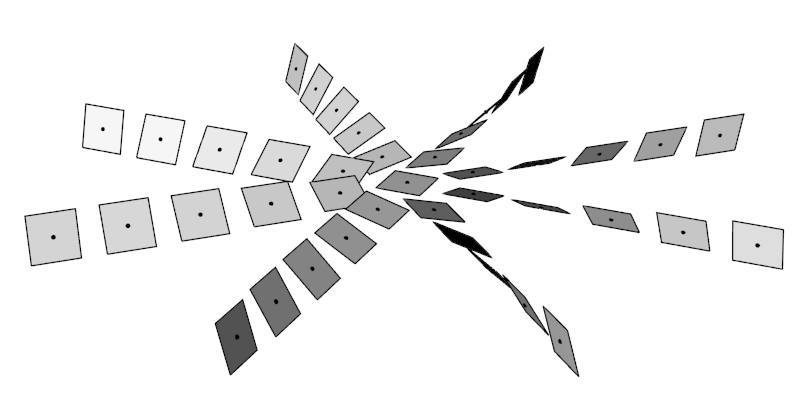}
  \end{center}
  \caption{Another view of the standard contact structure on $\R^3$}
  \label{fig:std2}
\end{figure}

Figure~\ref{fig:lien} shows how to deform Figure~\ref{fig:std2} to embed it
inside Figure~\ref{fig:std1}.
\begin{figure}
  \begin{center}
	\includegraphics[width=10cm]{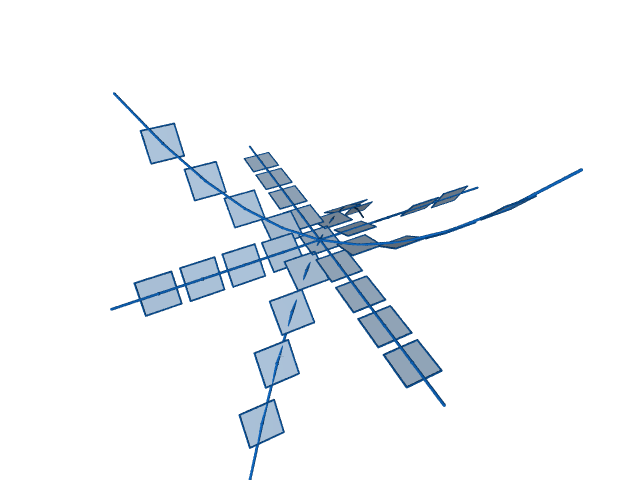}
  \end{center}
	\caption{Embedding of Figure~\ref{fig:std2} into Figure~\ref{fig:std1}}
  \label{fig:lien}
\end{figure}

Either of these contact structures (which are diffeomorphic by construction)
will be called the canonical contact structure on $\R^3$. Of course they can
all be used as the local model in the definition of a contact structure.

\subsubsection*{The canonical contact structure on $\S^3$}

We have already met the canonical contact structure on $\S^3$ coming from the
canonical contact structures on $ST^*\S^2$. One can prove that it is also
\begin{itemize}
\item 
the orthogonal of the Hopf circles for the round metric,
\item
a left-invariant contact structure on the Lie group $SU(2)$,
\item
$T\S^3 \cap JT\S^3$ when $\S^3$ is seen as the boundary of the unit ball in
$\mathbb{C}^2$ and $J$ denotes the action of multiplication by $i$ in 
$T\mathbb{C}^2$.
\end{itemize}

The complement of a point in the standard $\S^3$ is isomorphic to the standard
$\R^3$, see \cite[Proposition 2.1.8]{GeigesBook} for a computational proof
valid in any dimension.


\section{Isotopies}

\subsection{Isotopic contact structures and Gray's theorem}

Up to now we considered two contact structures to be the same if they are
conjugated by some diffeomorphism. One can restrict this by considering only
diffeomorphisms corresponding to deformations of the ambient manifold.
An isotopy is a family of diffeomorphisms $\varphi_t$ parametrized by 
$t \in [0, 1]$ such that 
$(x, t) \mapsto \varphi_t(x)$ is smooth and $\varphi_0 = Id$. The time-dependent
vector field generating $\varphi_t$ is defined as $X_t = \frac d{dt} \varphi_t$.
One says that two contact structures $\xi_0$ and $\xi_1$ are isotopic if there
is an isotopy $\varphi_t$ such that $\xi_1 = (\varphi_1)_*\xi_0$.
In particular such contact structures can be connected by the path of contact
structures $\xi_t := (\varphi_t)_*\xi_0$. It is then natural to consider the
seemingly weaker equivalence relation of homotopy among contact structures. The
next theorem says in particular that, on closed manifolds, this equivalence
relation is actually the same as the isotopy relation.

\begin{theorem}[Gray \cite{Gray}]
\label{thm:gray}
For any path $(\xi_t)_{t \in [0, 1]}$ of contact structures on a closed
manifold, there is an isotopy $\varphi_t$ such that $\varphi_t^*\xi_t = \xi_0$.

The vector field $X_t$ generating $\varphi_t$ can be chosen in
$\lim_{\varepsilon \to 0} \xi_t \cap \xi_{t + \varepsilon}$ at
each time $t$.
\end{theorem}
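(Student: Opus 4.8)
The plan is to run Moser's path method. Since every $\xi_t$ is coorientable and the family is smooth, I would first choose a smooth path of contact forms $\alpha_t$ with $\ker\alpha_t = \xi_t$. The relation $\varphi_t^*\xi_t = \xi_0$ is then equivalent to the existence of positive functions $\mu_t$ with $\varphi_t^*\alpha_t = \mu_t\alpha_0$, so I look for $\varphi_t$ as the flow of a time-dependent vector field $X_t$ and try to arrange this equality automatically, deriving $X_t$ from a pointwise algebraic condition rather than solving a PDE.

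Differentiating $\beta_t := \varphi_t^*\alpha_t$ and using Cartan's formula gives $\frac{d}{dt}\beta_t = \varphi_t^*\big(\dot\alpha_t + \iota_{X_t}d\alpha_t + d(\iota_{X_t}\alpha_t)\big)$. The key idea is to demand $X_t \in \xi_t$, which kills the term $d(\iota_{X_t}\alpha_t)$ since $\iota_{X_t}\alpha_t \equiv 0$, reducing the target identity to $\dot\alpha_t + \iota_{X_t}d\alpha_t = g_t\alpha_t$ for some function $g_t$. Restricting this to $\xi_t$ makes the right-hand side vanish, leaving $\iota_{X_t}d\alpha_t|_{\xi_t} = -\dot\alpha_t|_{\xi_t}$; because the contact condition makes $d\alpha_t|_{\xi_t}$ non-degenerate (as recorded after the Darboux--Pfaff theorem), this determines a unique $X_t \in \xi_t$ pointwise. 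Feeding the Reeb field $R_{\alpha_t}$ into the full identity and using $\iota_{R_{\alpha_t}}d\alpha_t = 0$ fixes $g_t = \dot\alpha_t(R_{\alpha_t})$, and since $\xi_t$ and $R_{\alpha_t}$ span $TV$ the two sides then agree everywhere.

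With $X_t$ so defined, closedness of the manifold lets $X_t$ integrate to an isotopy $\varphi_t$ defined for all $t\in[0,1]$; this completeness is the one place the hypothesis is genuinely used, and is the main obstacle — on open manifolds the flow may run off to infinity and the statement indeed fails. Granting it, $\beta_t$ satisfies the linear ODE $\frac{d}{dt}\beta_t = (g_t\circ\varphi_t)\,\beta_t$, whose solution is $\beta_t = \mu_t\alpha_0$ with $\mu_t = \exp\!\big(\int_0^t g_s\circ\varphi_s\,ds\big) > 0$. Hence $\varphi_t^*\xi_t = \ker\beta_t = \ker\alpha_0 = \xi_0$, which is the desired conclusion.

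For the refinement, I would identify the limiting plane $\lim_{\varepsilon\to0}\xi_t\cap\xi_{t+\varepsilon}$. Writing $\alpha_{t+\varepsilon} = \alpha_t + \varepsilon\dot\alpha_t + O(\varepsilon^2)$, a vector of $\xi_t$ lies in the intersection to first order exactly when $\dot\alpha_t$ annihilates it, so the limit is $\{v\in\xi_t : \dot\alpha_t(v) = 0\} = \ker(\dot\alpha_t|_{\xi_t})$. It then suffices to observe that the $X_t$ built above already sits there: evaluating the defining relation $\iota_{X_t}d\alpha_t|_{\xi_t} = -\dot\alpha_t|_{\xi_t}$ on $X_t\in\xi_t$ gives $d\alpha_t(X_t,X_t) = -\dot\alpha_t(X_t)$, and the left side vanishes by antisymmetry, so $\dot\alpha_t(X_t) = 0$. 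Thus $X_t \in \lim_{\varepsilon\to0}\xi_t\cap\xi_{t+\varepsilon}$ automatically, with no modification of the construction needed.
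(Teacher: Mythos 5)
Your proof is correct and follows essentially the same route as the paper: you pick the unique Legendrian vector field $X_t$ with $(\iota_{X_t}d\alpha_t)_{|\xi_t} = -(\dot\alpha_t)_{|\xi_t}$, apply Cartan's formula to get $\frac{d}{dt}\varphi_t^*\alpha_t = (g_t\circ\varphi_t)\,\varphi_t^*\alpha_t$, and invoke compactness for completeness of the flow, exactly as in the text. The only (welcome) additions are that you pin down the conformal factor explicitly as $g_t = \dot\alpha_t(R_{\alpha_t})$ via the Reeb field, where the paper leaves it as an unspecified $\mu_t$, and that you verify $X_t \in \xi_t\cap\ker\dot\alpha_t$ by the antisymmetry computation $d\alpha_t(X_t,X_t)=0$, where the paper builds this into the choice of $X_t$ from the start.
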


\begin{proof}
The proof of this theorem can be found in many places but without much geometric
explanations. So we now explain the picture behind it.  The key is to be able
to construct an isotopy pulling back $\xi_{t + \varepsilon}$ to $\xi_t$ for
infinitesimally small $\varepsilon$. It means we will construct the generating
vector field $X_t$ rather than $\varphi_t$ directly. The compactness assumption
will guaranty that the flow of $X_t$ exists for all time.

At any point $p$, if the plane $\xi_{t+\varepsilon}$ coincides with $\xi_t$
then we have nothing to do and set $X_t = 0$.
Otherwise, these two planes intersect transversely along a line
$d_{t,\varepsilon}$. 
The natural way to bring $\xi_{t+\varepsilon}$ back to $\xi_t$ is to rotate it
around $d_{t, \varepsilon}$. 
Since we know from the proof of Theorem \ref{thm:Darboux} that the flow of
Legendrian vector fields rotate the contact structure, we will choose $X_t$ in
the line $d_t := \lim_{\varepsilon \to 0} d_{t,\varepsilon}$, see
Figure~\ref{fig:gray}.
\begin{figure}[!htp]
  \begin{center}
	\includegraphics[width=8cm]{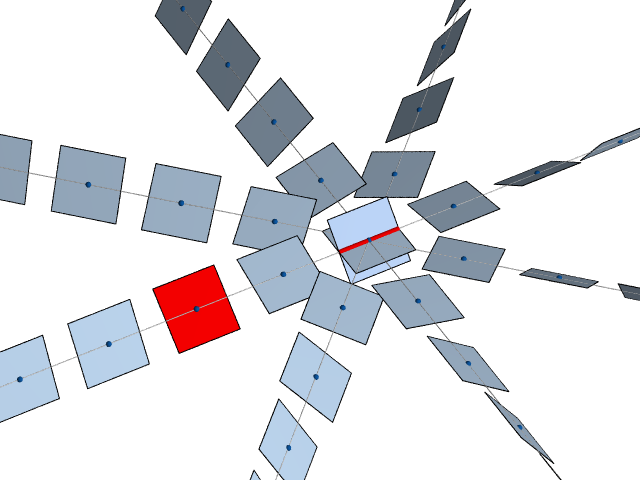}
  \end{center}
  \caption{Proof of Gray's theorem}
  \label{fig:gray}
\end{figure}
Let us compute $d_{t,\varepsilon}$~:
\[
d_{t,\varepsilon} = \{v \;|\; \alpha_{t + \varepsilon}(v) = \alpha_t(v) = 0\} =
\{v \in \xi_t \;|\; \textstyle \frac{1}{\varepsilon}
\displaystyle (\alpha_{t + \varepsilon} - \alpha_t)(v) = 0\}
\]
which gives, as $\varepsilon$ goes to zero:
$d_t = \xi_t \cap \ker(\dot\alpha_t)$. 

The contact condition for $\alpha_t$ is equivalent to the fact that
$(d\alpha_t)_{|\xi_t}$ is non-degenerate. So $X_t$ belongs to
$\xi_t \cap \ker(\dot\alpha_t)$ if and only if it belongs to $\xi_t$ and
$\iota_{X_t}d\alpha_t =f_t \dot\alpha_t$ on $\xi_t$ for some function $f_t$.

Moreover, we want $X_t$ to compensate the rotation expressed by 
$\dot \alpha_t$. 
A natural guess is then to pick the unique Legendrian vector field $X_t$ such
that $(\iota_{X_t} d\alpha_t)_{|\xi_t} = - (\dot\alpha_t)_{|\xi_t}$.

We now have a precise candidate for $X_t$ and we can compute to prove that it
does the job. Let $\varphi_t$ be the flow of $X_t$. Using Cartan's formula, we
get:
\begin{align*}
\frac{d}{dt} \varphi_t^* \alpha_t &= \varphi_t^*\big(\dot\alpha_t +
\Lie_{X_t}\alpha_t\big)\\
&= \varphi_t^*\big(\dot\alpha_t + \iota_{X_t} d\alpha_t\big).
\end{align*}
By construction, the term in the parenthesis vanishes on $\xi_t$ so it is
$\alpha_t$ multiplied by some function $\mu_t$ and we get:
\[
\frac{d}{dt} \varphi_t^* \alpha_t = (\mu_t \circ \varphi_t)
\varphi_t^*\alpha_t.
\]
So $\varphi_t^*\alpha_t$ stays on a line in the space of one forms.
This line is obviously the line spanned by $\varphi_0^*\alpha_0 = \alpha_0$ and
we then have $\ker \varphi_t^*\alpha_t = \ker \alpha_0 = \xi_0$ for all $t$.
It is not hard to see that $X_t$ is the only Legendrian vector field which
works.
\end{proof}

\begin{indented}
	Some compactness assumption is indeed necessary in Gray's theorem. There are
	counter-examples on $\R^2 \times \S^1$ discovered in \cite{Eliashberg_open}.
	
	Contact structures form an open set in the space of all plane fields.
	Gray's theorem proves that isotopy classes of contact structures on a closed
	manifold are actually connected components of this open set. In particular
	there are only finitely many isotopy classes of contact structures on a
	closed manifold.	

	The example of linear foliations on $T^3$ proves that Gray's theorem
	wouldn't hold for foliations.
\end{indented}

\subsection{Libermann's theorem on contact Hamiltonians}

Contact transformations of a contact manifold $(V, \xi)$ are diffeomorphisms of
$V$ which preserve $\xi$. The infinitesimal version of these are vector fields
whose flow consists of contact transformations. They are called contact vector
fields and are exactly those $X$ for which 
$\left(\Lie_X \alpha\right)_{|\xi} = 0$
for any contact form $\alpha$ defining $\xi$. Note that this condition is
weaker than $\Lie_X \alpha = 0$ which would imply that the flow of $X$
preserves $\alpha$ and not only its kernel $\xi$.

In the proof of Gray's theorem, we saw that one can rotate a contact structure
at will using the flow of a Legendrian vector field uniquely determined by the
rotation we want to achieve. The same idea allows to prove that any vector
field on a contact manifold can be transformed into a contact vector field by
adding a uniquely determined Legendrian vector field.  This is the geometric
fact underlying the existence of so-called contact Hamiltonians.

\begin{theorem}[Libermann \cite{Libermann}]
\label{thm:lib}
On a contact manifold $(V, \xi)$ the map which sends a contact vector field to
its reduction modulo $\xi$ is an isomorphism from the space of contact vector
fields to the space of sections of the normal bundle $TV/\xi$.
\end{theorem}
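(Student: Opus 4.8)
The plan is to fix an auxiliary contact form $\alpha$ with $\xi = \ker\alpha$ and to reduce the statement to a pointwise linear-algebra fact about the non-degenerate form $d\alpha$ on $\xi$. The Reeb field $R_\alpha$ is transverse to $\xi$, so $TV = \xi \oplus \R R_\alpha$ and the reduction modulo $\xi$ of a vector field $X$ is recorded by the function $H := \alpha(X)$, which vanishes exactly when $X$ is Legendrian. Under this identification the map in the statement becomes $X \mapsto \alpha(X)$, so the theorem amounts to showing that for every function $H$ there is a unique contact vector field $X$ with $\alpha(X) = H$. Since both the contact condition and the reduction modulo $\xi$ are intrinsic, the particular choice of $\alpha$ only serves here as a bookkeeping device.

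First I would rewrite the contact condition. By Cartan's formula $\Lie_X \alpha = \iota_X d\alpha + d(\alpha(X)) = \iota_X d\alpha + dH$, so $X$ is a contact vector field exactly when $(\iota_X d\alpha)_{|\xi} = -(dH)_{|\xi}$. Writing $X = H R_\alpha + Y$ with $Y$ Legendrian, and using $\iota_{R_\alpha} d\alpha = 0$, the left-hand side collapses to $(\iota_Y d\alpha)_{|\xi}$. Hence $X$ is contact and reduces to $H$ if and only if its Legendrian part $Y \in \xi$ solves
\[
(\iota_Y d\alpha)_{|\xi} = -(dH)_{|\xi}.
\]

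The crucial input is now the contact condition itself, which by the discussion following Theorem~\ref{thm:Darboux} says precisely that $(d\alpha)_{|\xi}$ is non-degenerate. Fiberwise this means $Y \mapsto (\iota_Y d\alpha)_{|\xi}$ is a linear isomorphism from $\xi$ onto $\xi^*$, so the displayed equation has a unique solution $Y$ at every point, depending smoothly on $H$ and its derivative. This produces a unique contact vector field $X = H R_\alpha + Y$ for each $H$, giving surjectivity; taking $H = 0$ forces $Y = 0$ and hence $X = 0$, giving injectivity. Linearity of $X \mapsto \alpha(X)$ is immediate, so the map is an isomorphism of vector spaces.

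I do not expect a serious obstacle: once the reduction above is set up, everything rests on the single pointwise fact that $(d\alpha)_{|\xi}$ is non-degenerate. The one point that deserves care is the non-closed term $d(\alpha(X))$ coming out of Cartan's formula. A priori one might fear it feeds the unknown back into the equation, but $H = \alpha(X)$ is fixed by the target section rather than by the Legendrian correction $Y$, so $dH$ is a known $1$-form and the equation for $Y$ stays genuinely linear. This is exactly the mechanism announced before the statement---turning an arbitrary vector field into a contact one by adding a uniquely determined Legendrian vector field---and it mirrors the construction of $X_t$ in the proof of Gray's theorem.
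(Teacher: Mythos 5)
Your proof is correct and follows essentially the same route as the paper: both arguments apply Cartan's formula to reduce the contact condition to the equation $(\iota_Y d\alpha)_{|\xi} = -(\text{known 1--form})_{|\xi}$ for a Legendrian correction $Y$, which is uniquely solvable by non-degeneracy of $d\alpha_{|\xi}$. The only cosmetic difference is that you choose the specific lift $H R_\alpha$ of the section via the Reeb field (i.e.\ the trivialization the paper describes after the theorem), whereas the paper works with an arbitrary lift $X$ and shows there is a unique Legendrian $X_\xi$ with $X + X_\xi$ contact.
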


If we single out a contact form $\alpha$ then we get a trivialization
$TV/\xi \to V \times \R$ given by $(x, [u]) \mapsto (x, \alpha(u))$.
Sections of $TV/\xi$ can then be seen as functions on $V$ and the contact
vector field $X_f$ associated to a function $f$ using the preceding theorem is
called the Hamiltonian vector field coming from $\alpha$ and $f$.
Libermann's theorem both implies existence of $X_f$ and the fact that it is the
unique contact vector field satisfying $\alpha(X) = f$.
The situation is analogous to the case of Hamiltonian vector fields in
symplectic geometry but in the symplectic case there are symplectic vector
fields that are not Hamiltonian.
Note that the above interpretation when a contact form is fixed is
what Libermann originally discussed and also the most common use of the word
contact Hamiltonian.

\begin{proof}[Proof of Theorem~\ref{thm:lib}]
Let $X$ be any vector field on $V$. The theorem is equivalent to the assertion
that there is a unique Legendrian vector field $X_\xi$ such that 
$X + X_\xi$ is contact. Using any contact form $\alpha$, we have equivalent
reformulations:
\begin{align*}
X + X_\xi \text{ is contact} &\iff
\left(\Lie_{X + X_\xi}\alpha\right)_{|\xi} = 0 \\
&\iff 
\left(\iota_{X + X_\xi}d\alpha + d(\iota_X\alpha)\right)_{|\xi} = 0 \\
&\iff
\left(\iota_{X_\xi}d\alpha\right)_{|\xi} = -\left( \iota_X d\alpha+ d(\iota_X\alpha)\right)_{|\xi} \\
\end{align*}
and the later condition defines uniquely $X_\xi$ because $d\alpha_{|\xi}$ is
non-degenerate.
\end{proof}

\begin{remark}
\label{rem:cutoff}
A common use of contact Hamiltonians, and the only one we will need, is to
cut-off or extend a contact vector field. For instance if $X$ is a contact
vector field defined on an open set $U \subset V$ and $F$ is a closed subset 
of $V$ contained in $U$ then there is a contact vector field $\tilde X$
which vanishes outside $U$ and equals $X$ on $F$. If $L$ denotes the isomorphism
of Theorem~\ref{thm:lib} and $\rho$ is a function with support in $U$ such that
$\rho_{|F} \equiv 1$ then we can use $\tilde X = L^{-1}(\rho L(X))$.
\end{remark}

\chapter*{Setting the goals: the tight vs overtwisted dichotomy}

After the local theory and before starting our study of convex surfaces, we need
some motivation.

In Figure \ref{fig:std2} showing $\ker(dz + r^2 d\theta)$, the contact planes
rotate along rays perpendicular to the $z$-axis but are never horizontal away
from the $z$-axis.
On the other extreme one can instead consider a contact structure which turns
infinitely many times along these rays. A possible contact form for this is
$\cos(r)\,dz + r\sin(r)\, d\theta$ which is horizontal for each $r$ such that
$\sin(r) = 0$, ie $r =k\pi$. Figure \ref{fig:disque_vrille} shows what happens
along $z = 0$ and $r \leq \pi$. One sees a disk whose tangent space agrees with
$\xi$ at the center and along the boundary.
\begin{figure}[htp]
  \begin{center}
	\includegraphics[height=7cm]{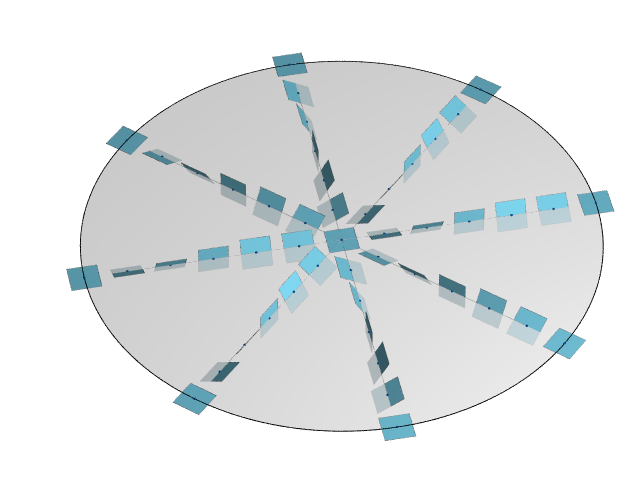}
  \end{center}
  \caption{An overtwisted contact structure}
  \label{fig:disque_vrille}
\end{figure}
\begin{definition}[Eliashberg]
A contact manifold is \emph{overtwisted} if it contains an embedded disk along which
the contact structure is as in Figure \ref{fig:disque_vrille}: the contact
structure $\xi$ is tangent to the disk in the center and along the boundary and
tangent to rays from the center to the boundary. A contact structure which is
not overtwisted is called \emph{tight}.
\end{definition}

It may look like this is the beginning of an infinite series of definitions
where ones looks at disks $z = 0$, $r \leq k\pi$ in the model above. But this
would bring nothing new as can be seen from the following exercise.

\begin{exercise}
Prove that any neighborhood of an overtwisted disk in a contact manifold
contains a whole copy of $(\R^3, \xiOT)$ where 
$\xiOT = \ker\big(\cos(r)\,dz + r\sin(r)\, d\theta\big)$.
\end{exercise}

The above exercise is pretty challenging at this stage but it can serve as a
motivation for the technology at the beginning of the next chapter. And, most
of all, it shows that not immediately seeing something in a contact manifold
does not mean it is not there (recall also Figure~\ref{fig:lien13}). This
begins to highlight the depth of the following two results whose proof is the
main goal of these lecture notes.

\begin{theorem}[Bennequin 1982 \cite{Bennequin}]
\label{thm:bennequin}
The standard contact structures on $\R^3$ and $\S^3$ are tight.
\end{theorem}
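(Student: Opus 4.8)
The plan is to reduce to $(\R^3,\xi_0)$ and then argue by contradiction. For the reduction, recall from the excerpt that the standard $\S^3$ minus a point is isomorphic to the standard $\R^3$. An overtwisted disk is compact, so if one existed in $\S^3$ it would miss some point and hence sit inside a copy of standard $\R^3$; thus tightness of $\R^3$ implies tightness of $\S^3$, and I only need to treat $\R^3$.

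Suppose for contradiction that $(\R^3,\xi_0)$ contains an overtwisted disk $\Delta$. In the model of Figure~\ref{fig:disque_vrille} the characteristic foliation of $\Delta$ has a closed leaf (the radius-$\pi$ circle) in addition to the tangencies at the center and along the boundary. Perturbing $\Delta$ to a $\xi$-convex disk, using the genericity of convexity and the realization lemma developed in Chapter 2, this closed leaf is converted into a homotopically trivial component of the dividing set $\Gamma$. So an overtwisted disk produces a $\xi$-convex surface in $(\R^3,\xi_0)$ carrying a contractible dividing circle; the exercise above shows this feature is robust, since the overtwisting already fills a whole copy of $(\R^3,\xiOT)$. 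Consistently, $\partial\Delta$ is a Legendrian unknot with $\mathrm{tb}=0$, whereas convexity records the deficit through $\mathrm{tb}=-\tfrac12\#(\partial\Delta\cap\Gamma)$.

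It therefore suffices to prove that no $\xi$-convex surface in standard $\R^3$ can carry a contractible dividing curve, and here I would exploit the explicit geometry. For $\xi_0=\ker(dz+r^2\,d\theta)$ the field $\partial_z$ is a contact vector field transverse to every horizontal plane $\{z=c\}$, so those planes are $\xi$-convex with empty dividing set, while the round spheres are $\xi$-convex with a single dividing circle. Sweeping the ball bounded by $\Delta$ together with a convex cap by such a family and invoking the one-parameter family theory of Chapter 3, the dividing set changes only at the isolated bifurcations classified there. The decisive point is that a contractible dividing curve can be born only at the bifurcation corresponding to the overtwisting obstruction — a retrograde saddle connection or a limit cycle in the characteristic foliation — and the essentially radial foliations of the standard model never undergo such a bifurcation. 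Hence no contractible dividing curve ever appears, contradicting the previous paragraph.

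The main obstacle is precisely this bifurcation analysis: making rigorous, for a generic one-parameter family of $\xi$-convex surfaces in $(\R^3,\xi_0)$, that the birth of a homotopically trivial dividing curve forces the moving characteristic foliation through exactly the degeneracy that the monotone rotation of $\xi_0$ forbids. The reduction and the conversion of an overtwisted disk into a contractible dividing curve are comparatively routine applications of the realization lemma and the Thurston--Bennequin computation; the genuine work — and the reason Chapter 3 is needed at all — is to control how the dividing set of a moving convex surface can change and to match the only dangerous move with the explicit obstruction that the standard contact structure avoids.
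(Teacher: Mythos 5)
Your reduction to $\R^3$ and the translation of an overtwisted disk into a convex surface carrying a contractible dividing curve are sound and run parallel to the paper, which instead isotopes the disk onto the middle sphere $S_0$ of a thickened sphere and reduces everything to Theorem~\ref{thm:bennequin_giroux} (the round spheres are $\xi$-convex with the equator as connected dividing set, by Example~\ref{ex:radial_field}). The genuine gap is your ``decisive point'': it is simply not true that the characteristic foliations occurring in your sweep-out ``never undergo'' retrograde saddle connections or degenerate closed leaves. Only the two ends of your family are controlled (round spheres on one side, the capped overtwisted disk on the other); the intermediate surfaces are arbitrary embedded spheres, their characteristic foliations must change, and a generic one-parameter family unavoidably crosses the codimension-one strata $\Sigma^1_{\text{sc}}$ and $\Sigma^1_{\text{dl}}$ --- such bifurcations occur in tight manifolds all the time (see Example~\ref{example:connection_selle} and the structurally stable accumulation of Figure~\ref{fig:csr_dl}); the monotone rotation of $\xi_0$ forbids none of them. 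Worse, a single retrograde connection may perfectly well change the isotopy class, and even the connectedness, of the dividing set: in the paper's language, one arc move $A$ can turn the tree $\Gamma_+$ into a graph with a cycle, i.e.\ give birth to exactly the contractible dividing curve you are trying to exclude. The crossing lemma (Lemma~\ref{lemma:croisement}) constrains \emph{how} this happens --- the separatrix turns to its right --- not \emph{whether} it happens, so no one-parameter bifurcation count can produce the contradiction you want.

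This is why the paper's proof is intrinsically two-parameter. One parameter $t$ sweeps the spheres; the other parameter $s$ comes from the ambient isotopy bringing the overtwisted disk onto $S_0$, producing the family $\xi_s S_t$ over a square. The hypothesis of Theorem~\ref{thm:bennequin_giroux} --- at $s=0$ \emph{all} spheres are convex with connected dividing set --- is the only place the explicit geometry of the standard structure enters. One then takes the minimal $s_0$ over the closure of the disconnected locus $\Omega_d$ and rules out every stratum of the non-convex locus at a point $(s_0,t_0)$: the codimension-one strata and $\Sigma^2_{\text{sc}}$, $\Sigma^2_{\text{dl}}$ are excluded because transversality of the bifurcation strata to the $t$-direction (a consequence of the bifurcation lemmas) would let $\Omega_d$ project past $s_0$, and the one dangerous configuration --- a point of $\Sigma^{11}$ where two retrograde connections cross --- is excluded by the tree combinatorics of Proposition~\ref{prop:trees}, whose symmetry in $A_1$ and $A_2$ uses precisely the right-turning chirality supplied by the crossing lemma (Figure~\ref{fig:contre_ex} shows the argument collapses without it). Your proposal contains none of this global combinatorial mechanism, which is the actual content of the proof. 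A minor further point: perturbing $\Delta$ itself to a convex surface requires care with its Legendrian boundary, an issue the paper sidesteps by keeping the overtwisted disk in the interior of a closed sphere.
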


\begin{theorem}[Eliashberg 1992 \cite{Eliashberg_20_ans}]
All tight contact structures on $\R^3$ or $\S^3$ are isomorphic to the standard
ones.
\end{theorem}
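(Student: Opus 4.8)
The plan is to decouple the two features of the problem: the \emph{local} rigidity coming from tightness and the \emph{global} assembly over $\S^3$ or $\R^3$. Everything reduces to one uniqueness statement on the ball, which I will call \emph{ball uniqueness}: two tight contact structures on $B^3$ inducing the same characteristic foliation on $\partial B^3$ are isotopic relative to the boundary. Granting this, the theorem follows by cutting and gluing. For $\S^3 = B^3 \cup_{\S^2} B^3$, I would split along a sphere $\Sigma$ made $\xi$--convex by the realization lemma; since $\xi$ is tight, Giroux's tightness criterion forces the dividing set of $\Sigma$ to be a single circle, which is exactly the dividing set carried by the standard $\S^3$. Hence the germ of $\xi$ along $\Sigma$ is the standard germ, and ball uniqueness applied to each of the two halves identifies $\xi$ with the standard structure up to isotopy. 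For $\R^3$ I would instead exhaust by an increasing sequence of $\xi$--convex balls, each again with a single-circle dividing set, apply ball uniqueness on each of them, and assemble the resulting isotopies into a single global one, using the cut-off construction of Remark~\ref{rem:cutoff} to keep the successive isotopies compatible and to control the limit despite non-compactness.

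The heart of the matter is ball uniqueness, which I would prove by cutting along convex discs and inducting on complexity. After isotoping $\xi_0$ and $\xi_1$ to agree near their common convex boundary $\partial B^3$ (legitimate because the germ along a convex sphere is determined by its characteristic foliation, hence by its connected dividing set), I would choose a properly embedded disc $D$ with Legendrian boundary and make it $\xi_i$--convex by the realization lemma. Tightness forbids a closed dividing curve on $D$: such a curve would bound a subdisc and produce an overtwisted disc, in contradiction with the Eliashberg--Bennequin inequalities established earlier. Therefore the dividing set of $D$ consists of boundary-parallel arcs, and can be normalised to a single arc. Cutting $B^3$ along $D$ and rounding the resulting corners yields a smaller ball whose boundary characteristic foliation is again of standard type, while the contact germ near $D$ is pinned down by its dividing set; iterating this reduction, or invoking uniqueness of the Darboux germ on the innermost ball, shows that $\xi_0$ and $\xi_1$ are isotopic relative to $\partial B^3$.

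The hard part will be precisely this inductive step, and it rests on two deep inputs that must be installed before the argument runs. The first is Giroux's criterion relating tightness to the absence of contractible dividing curves, used twice above (to normalise the boundary sphere and to exclude closed dividing curves on the cutting disc); this is the genuine repository of the tightness hypothesis. The second is the control of convexity in the one-parameter families that appear when one interpolates between $\xi_0$ and $\xi_1$, namely the bifurcation analysis of Chapter~3: one must show that convexity holds generically along such a family and that the finitely many failures can be crossed without changing the isotopy class, all while keeping the isotopy fixed near the boundary and simplifying the dividing configuration. I expect the genericity-in-families statement and the bookkeeping of dividing sets through the cutting and rounding to absorb most of the technical effort; by comparison, the exhaustion-and-limit argument producing the global isotopy on $\R^3$ is routine once the cut-off technique of Remark~\ref{rem:cutoff} is available.
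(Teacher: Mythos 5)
There is a genuine gap, and it sits exactly where you placed the ``heart of the matter.'' Your inductive scheme for ball uniqueness --- cut along a convex disc $D$, round corners, recurse --- has no decreasing complexity measure and no valid base case. Cutting $B^3$ along a meridian disc whose dividing set is a single arc and rounding the edges produces balls of exactly the same type (convex boundary sphere, single dividing circle), so the reduction never terminates; and ``uniqueness of the Darboux germ on the innermost ball'' does not close the loop, because Darboux gives uniqueness of the germ of $\xi$ \emph{at a point}, not uniqueness on a ball \emph{relative to its boundary}. Rel-boundary uniqueness on the ball is precisely Eliashberg's theorem, so invoking it as the base case is circular. (Disc-cutting is how one reduces \emph{other} manifolds to the ball; the ball case itself requires an independent argument.) There are secondary gaps too: the Giroux criterion proved in these notes (Theorem \ref{thm:critère_Giroux}) concerns closed surfaces, whereas your cutting disc needs the Legendrian-boundary version together with Legendrian realization of $\partial D$ and an edge-rounding lemma, none of which are developed here; and the exclusion of closed dividing curves on $D$ follows from the criterion via the realization lemma producing an overtwisted disc, not from the Eliashberg--Bennequin inequalities, which are a consequence of the criterion rather than a tool for it.

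The paper takes a different and non-circular route: $\S^3$ is split into two standard balls and a thickened sphere, and the entire content is Proposition \ref{prop:thickened_sphere} --- a tight contact structure on $S \times [0,1]$ with $\xi$--convex boundary spheres is isotopic rel boundary to one for which \emph{every} sphere $S_t$ is convex. This rests on the generic one-parameter theory of foliations on the sphere, on Poincar\'e--Bendixson, on tightness excluding closed leaves of any $\xi S_t$ (via Schönflies), and crucially on the elimination lemma (Lemma \ref{lem:elimination}): at each of the finitely many non-convex times the retrograde saddle connection is destroyed by eliminating all positive saddles, which is possible because the positive Giroux graph is a tree. Giroux's uniqueness lemma, applied with the resulting continuous family of dividing sets, then concludes. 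Your third paragraph correctly names ``bifurcation control in families'' as the deep input, but your outline supplies no mechanism for actually crossing the non-convex moments --- the elimination lemma \emph{is} that mechanism, and without it (or an equivalent) your interpolation between $\xi_0$ and $\xi_1$ does not close. Note finally that the paper only writes out the $\S^3$ case (Theorem \ref{thm:eliashberg_s3}); your exhaustion sketch for $\R^3$ is plausible in outline, but calling the limit argument ``routine'' understates the control at infinity it requires.
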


Bennequin's theorem shows in particular that the standard contact structure on
$\R^3$ is not isomorphic to the overtwisted structure of Figure
\ref{fig:disque_vrille}.
In order to put this in perspective, recall that Figures \ref{fig:std3}
and \ref{fig:std1} show isomorphic contact structures. 
It may look like the difference between these is analogous to the difference
between Figure \ref{fig:std2} and \ref{fig:disque_vrille}. But Bennequin's
theorem proves that the later two pictures are really different.

Eliashberg's theorem shows that tight contact structures on $\S^3$ are rare.
By contrast, overtwisted contact structures abound. The Lutz--Martinet theorem,
revisited by Eliashberg, says that, on a closed oriented manifold, any plane
field is homotopic to an overtwisted contact structure \cite{Eliashberg_vrille}.
Recall that, because the Euler characteristic of a 3--manifold
always vanishes, all such manifolds have plane fields and even more, there are
always infinitely many homotopy classes of plane fields (for the classification
of homotopy classes of plane fields one can refer to \cite[Section
4.2]{GeigesBook}).

In \cite{CGH}, Colin, Giroux and Honda proved that only finitely
many homotopy classes of planes fields on each manifold can contain tight
contact structures. This is far beyond the scope of these lectures but see
Theorem \ref{thm:eliashberg_bennequin} for a weaker version due to Eliashberg
\cite{Eliashberg_20_ans}.

\chapter{Convex surfaces}

The goal of this chapter is to explain the following crucial observation by
Emmanuel Giroux in 1991: 
\begin{quote}
	If $S$ is a generic surface in a contact 3-manifold, all the information
	about the contact structure near $S$ is contained in an isotopy class
	of curves on $S$. 
\end{quote}
All this chapter except the last section comes from Giroux's PhD thesis
\cite{Giroux_91}, see also the webpage of Daniel Mathews for his translation of
that paper into English.

\section{Characteristic foliations of surfaces}
\label{sec:char_foliation}
After the local theory which explains what happens in neighborhoods of points
in contact manifolds, we want to start the semi-local theory which deals with
neighborhoods of surfaces.

The main tool will be characteristic foliations. The basic idea is to look at
the singular foliation given on a surface $S$ by the line field 
$TS \cap \xi$, see Figure~\ref{fig:def_xiS}.
\begin{figure}[htp]
	\begin{center}
			\includegraphics[width=5cm]{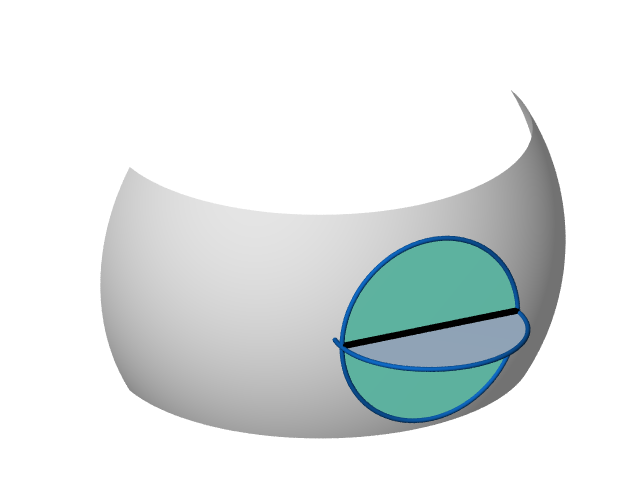}
	\end{center}
	\caption{Characteristic foliation of a surface as the intersection between the
	tangent space and the contact plane.}
	\label{fig:def_xiS}
\end{figure}

In order to define precisely what is a line field with singularities, we see
them as vector fields whose scale has been forgotten. It means they are
equivalence classes of vector fields where $X \sim Y$ if there is a positive
function $f$ such that $X = fY$. A singularity is then a point where some,
hence all, representative vanishes.  Note that $f$ should be positive
everywhere, including singularities.

One can think of a line as the kernel of a linear form rather than a subspace
spanned by a vector. This prompts an equivalent definition as  an equivalence
class of 1--forms where $\alpha \sim \beta$ if there is a positive function $f$
such that $\alpha = f\beta$.

To go from one point of view to the dual one, we can use an area form $\omega$
on the surface. The correspondence between vector fields and 1--forms is then
given by $X \mapsto \beta := \iota_X \omega$. The singular foliations $[X]$
defined by $X$ and $[\beta]$ defined by $\beta$ are indeed geometrically the
same since $X$ and $\beta$ vanish at the same points and elsewhere $X$ spans
$\ker \beta$.  In addition, one has the following commutative diagram which will
be useful later.

\begin{equation}
\label{eqn:forms_vectors}
\begin{CD}
	\text{vector fields} @>\sim>{\iota_\bullet \omega}>   \text{1--forms}\\
	@V{\Div}VV                  @VV{d}V \\
	\text{functions}     @>\sim>{\bullet \omega}>   \text{2--forms}\\
\end{CD}
\end{equation}

The left-hand side vertical arrow is the divergence map defined by the equality
$\Lie_X \omega = (\Div X) \omega$. So positive divergence means the flow of $X$
expands area while negative divergence means area contraction.
Divergence is not well defined for a singular foliation because it depends
on the representative vector field.  However, at a singularity of a foliation,
the sign of divergence is well defined because 
\[
\Lie_{fX}\omega = df \wedge \iota_X\omega + f(\Div X) \omega
\]
so, at points where $X$ vanishes, $\Div fX = f\Div X$. The same kind of
computation proves that this sign doesn't depend on the choice of the area form
within a given orientation class.

\begin{definition}
Let $S$ be an oriented surface in a contact manifold $(M, \xi)$ with $\xi = \ker
\alpha,$ co-oriented by $\alpha$. The characteristic foliation $\xi S$ of $S$ is
the equivalence class of the 1--form $\iota^*\alpha$ induced by $\alpha$ on $S$. 
\end{definition}

In particular, singularities of the characteristic foliation $\xi S$ are points
where $\xi = TS$ (maybe with reversed orientation).
At those points $d\iota^*\alpha = d\alpha_{|\xi}$ is non-degenerate so the
above commutative diagram proves that singularities of characteristic
foliations have non-zero divergence.

\paragraph{Examples}

Figures \ref{fig:xiS_spheres}, \ref{fig:xiS_torus_cvx} and \ref{fig:xiS_prelag}
show examples of characteristic foliations.

\begin{figure}[htp]
	\begin{center}
		\includegraphics[width=6cm]{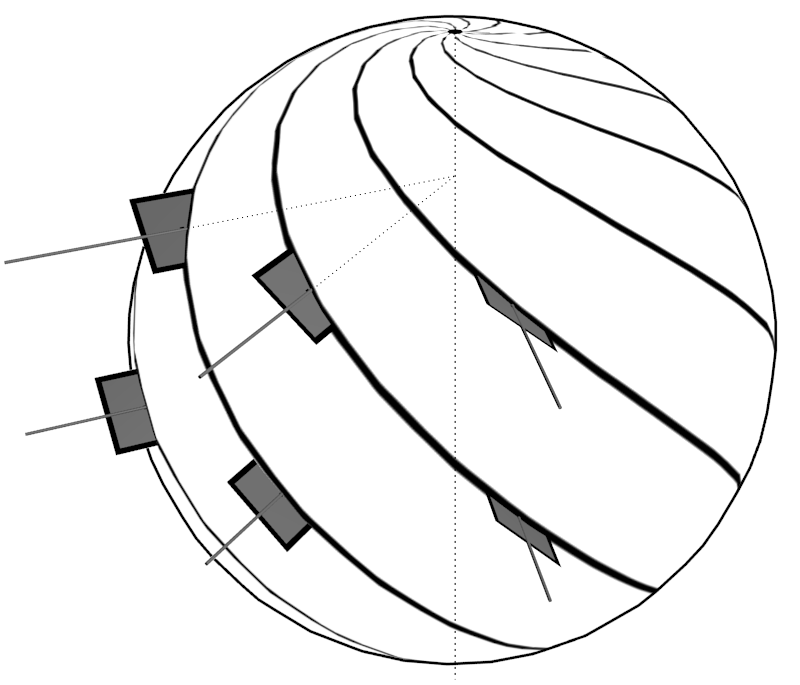}
	\end{center}
	\caption{Characteristic foliation of Euclidean spheres around the origin in
	$\R^3$ equipped with the canonical contact structure 
	$\xi = \ker (dz + r^2 d\theta)$. There are singular points at the intersection
	with the $z$-axis and all regular leaves go from a singularity to the other
	one.}
	\label{fig:xiS_spheres}
\end{figure}

\begin{figure}[htp]
	\begin{center}
		\includegraphics[width=8cm]{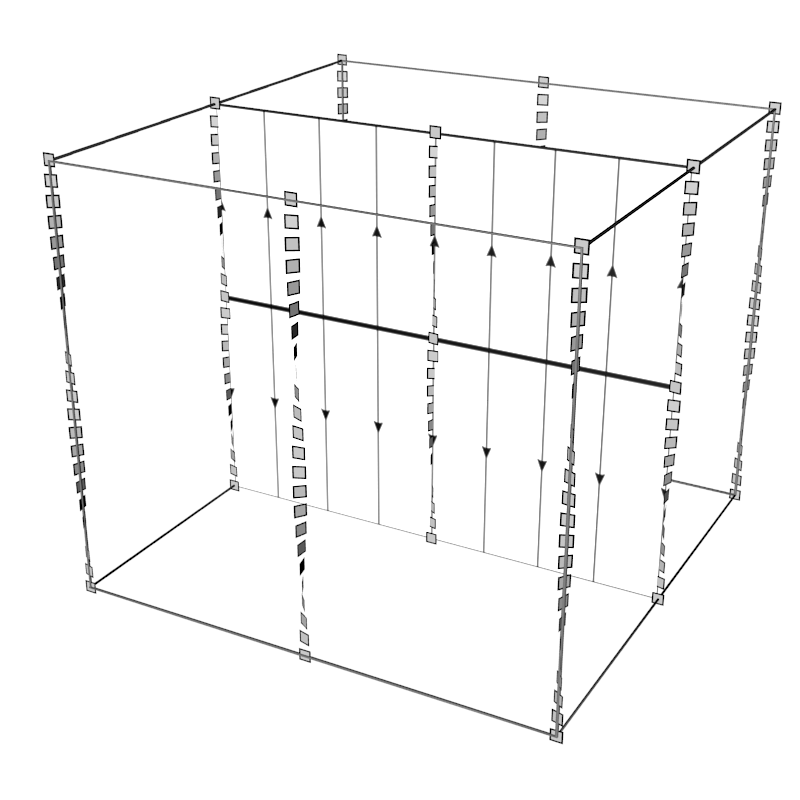}
	\end{center}
	\caption{Characteristic foliation of a torus $\{x = \text{constant}\}$ in
	$T^3$ equipped with its canonical contact structure 
	$\xi = \ker (\cos(z)dx - \sin(z)dy)$. One can see two circles made entirely of
	singularities where $\sin(z) = 0$, one appear in the middle of the picture and
	the other one can be seen both at bottom and at top.}
	\label{fig:xiS_torus_cvx}
\end{figure}

\begin{figure}[htp]
	\begin{center}
		\includegraphics[width=8cm]{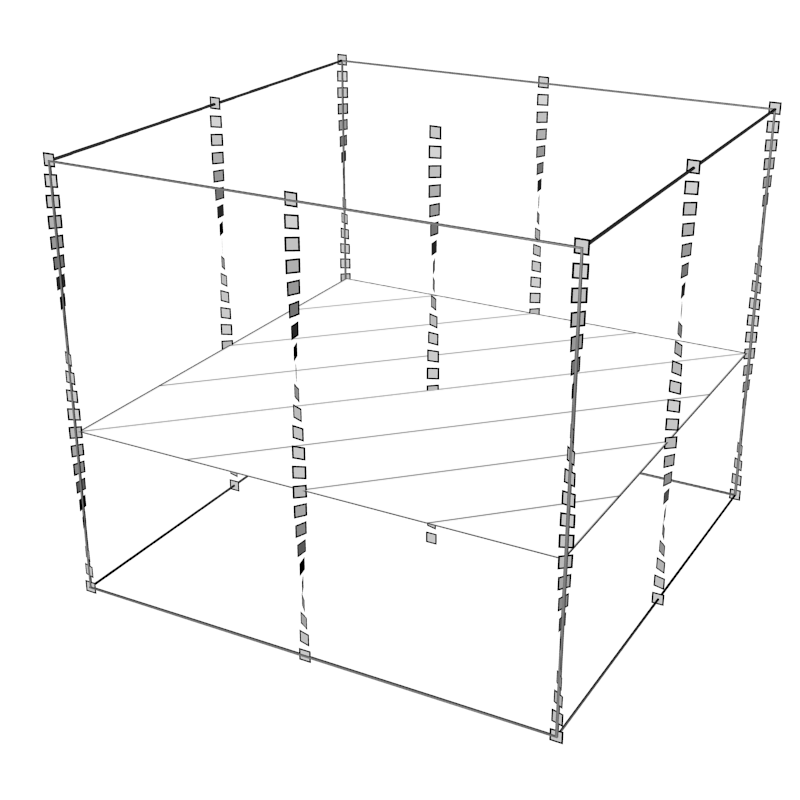}
	\end{center}
	\caption{Characteristic foliation of a torus $\{z = \text{constant}\}$ in
	$T^3$ equipped with its canonical contact structure 
	$\xi = \ker (\cos(z)dx - \sin(z)dy)$.}
	\label{fig:xiS_prelag}
\end{figure}

\subsection{Leaves of characteristics foliations}

The leaves (or orbits) of a singular foliation are the integral curves of any
vector field representing it. The intuitive notion of a singular foliation is
rather the data of leaves than an equivalence class of vector fields. In
contact geometry, this discrepancy does not generate any confusion thanks to
the following lemma.  It is a rather technical point but we discuss it here
anyway because it doesn't appear to be published anywhere else, although it is
mentioned in \cite[page 629]{Giroux_2000}.

\begin{lemma}[Giroux]
If two singular foliations on a surface have the same leaves and if their
singularities have non-zero divergence then they are equal.
\end{lemma}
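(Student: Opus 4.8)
The plan is to fix a local area form $\omega$ and represent the two foliations by honest vector fields $X_1$ and $X_2$, using the correspondence in the commutative diagram~\eqref{eqn:forms_vectors}. With this language, ``having the same leaves'' means that on the complement of the common singular set $\Sigma$ the vector fields $X_1$ and $X_2$ span the same \emph{oriented} line at every point, so there is a unique smooth \emph{positive} function $g$ on $S \setminus \Sigma$ with $X_1 = g X_2$. The entire lemma then reduces to a single claim: that $g$ extends to a smooth positive function across $\Sigma$. Indeed, once this is established, $X_1 = g X_2$ holds on all of $S$ with $g > 0$, which is exactly the definition of $[X_1] = [X_2]$.

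First I would record the transport identity that $g$ must satisfy. Since $\Lie_{gX_2}\omega = dg \wedge \iota_{X_2}\omega + g(\Div X_2)\omega = (X_2 \!\cdot g)\,\omega + g(\Div X_2)\,\omega$ while $\Lie_{X_1}\omega = (\Div X_1)\omega$, comparing the two on $S \setminus \Sigma$ gives
\[
X_2 \!\cdot g = \Div X_1 - g\,\Div X_2 .
\]
The right-hand side is assembled from the globally smooth functions $\Div X_1$ and $\Div X_2$, whose values at a singularity $p$ are the numbers $\delta_i := \Div X_i(p)$, nonzero by hypothesis. This is the quantitative form of the slogan from the proof of Theorem~\ref{thm:Darboux} that divergence records how the flow rescales $\omega$, and it is exactly where nonvanishing of the divergences will do its work.

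Next I would prove that $g$ extends \emph{continuously} with limiting value $\delta_1/\delta_2 > 0$ at each singularity. The tool is the pair of linearizations $A := DX_2|_p$ and $B := DX_1|_p$, whose traces are $\delta_2 \ne 0$ and $\delta_1 \ne 0$. Because the oriented orbits of $X_1$ and $X_2$ coincide near $p$, the oriented phase portraits of the linear fields $A$ and $B$ must agree; a short case analysis over the portraits of a $2\times 2$ matrix of nonzero trace (node, focus, saddle, and the degenerate rank-one case producing a curve of zeros as in Figure~\ref{fig:xiS_torus_cvx}) shows that identical oriented portraits force $B = \lambda A$ for a positive scalar, necessarily $\lambda = \delta_1/\delta_2$. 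Comparing $X_1 = g X_2$ with its linear model along directions transverse to $\ker A$ then forces $\lim_{x \to p} g(x) = \lambda > 0$, so $g$ extends continuously and positively.

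The hard part, and the place I expect the real work, is upgrading this continuous extension to a \emph{smooth} one, together with the degenerate geometry of non-isolated singularities. When $\Sigma$ is a curve (the torus example), $A$ is singular in the direction tangent to $\Sigma$ while keeping nonzero trace, so the linear argument controls only the transverse behavior and must be run with the normal direction to $\Sigma$ treated as a parameter. For smoothness, the transport identity above already pins down the derivative of $g$ along $X_2$ in terms of continuous data, but the transverse derivatives need more: I expect the clean route is to pass to coordinates adapted to $X_2$ (a flow box on $S \setminus \Sigma$ glued to a transverse normal form near $\Sigma$ in which the \emph{simple}, i.e.\ nonzero-trace, transverse vanishing of $X_2$ is explicit) and then invoke a smooth division argument to conclude that $g = X_1/X_2$ is smooth. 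This division step, legitimate precisely because $\delta_2 \ne 0$ makes $X_2$ vanish only to first order transversally, is the technical heart of the lemma.
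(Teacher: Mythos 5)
Your reduction---produce a single smooth positive function $g$ with $X_1 = gX_2$ by extending the obvious $g$ across the singular set---is the right target, but the proposal stops exactly where the lemma's content begins. The final step is announced (``I expect the clean route is\dots a smooth division argument'') rather than proved, and that division \emph{is} the whole lemma; nothing before it is load-bearing (the transport identity, in particular, only controls the derivative of $g$ along $X_2$, which vanishes on the singular set, so it gives no information where information is needed). The intermediate continuity argument via linearizations is also unsound in precisely the degenerate cases the lemma must cover. First, ``the oriented orbits of $X_1$ and $X_2$ coincide near $p$'' does not imply the linear portraits of $A = DX_2|_p$ and $B = DX_1|_p$ coincide: the nonlinear portrait can differ wildly from the linearized one when $A$ is not invertible (e.g.\ $X_2 = x\,\partial_x + y^2\partial_y$ has an isolated zero and looks nothing like its linearization $x\,\partial_x$, even though $\operatorname{tr} A = 1 \neq 0$), so the case analysis over $2\times 2$ matrices never gets started. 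Second, even granting matching portraits, when the singularity is non-isolated $A$ has rank one, all nonzero values $Av$ are parallel, and the directional limits $g(p+tv) \to \lambda_v$ obtained by comparing $Bv$ with $Av$ need not be shown direction-independent by any linear-independence trick; you acknowledge this case but only propose to ``run the argument with the normal direction as a parameter,'' which is again a plan, not a proof.

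For comparison, the paper closes this gap in one stroke, and it is worth seeing how cheap the smooth division becomes with the right coordinates. Write $X_2 = f\partial_x + g\partial_y$ near the singularity. Nonzero divergence means $\partial_x f + \partial_y g \neq 0$, so after a linear change one has $\partial_x f \neq 0$, and the implicit function theorem gives coordinates in which $f(x,y) = x$ \emph{exactly}. Then Taylor's formula with integral remainder, $f'(x,y) = f'(0,y) + x\int_0^1 \partial_x f'(tx,y)\,dt$, expresses the first component of $X_1$ as $f' = x\,u(x,y) + v(y)$ with $u$ \emph{manifestly smooth}; since $X_2$ is vertical or zero along $\{x=0\}$, so is $X_1$ (same leaves), forcing $v \equiv 0$; and the colinearity determinant yields $g' = ug$ for $x \neq 0$, hence everywhere by continuity. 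So $X_1 = uX_2$ with $u$ smooth from the outset---there is no separate continuity step, no smoothness upgrade, and non-isolated singularities (as in Figure~\ref{fig:xiS_torus_cvx}) require no special treatment. Your endgame for positivity is then exactly the paper's: $u > 0$ off the common zero set, while at a common zero $\Div X_1 = u \Div X_2$, and the nonvanishing divergence of $X_1$ forbids $u = 0$. In short: correct skeleton, correct diagnosis of where the work lies, but the decisive step---which the hypothesis of nonzero divergence is designed to make possible via a normal form for $X_2$, not via linearized portraits---is missing.
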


The following proof can be safely skipped on first reading.

\begin{proof}
The statement is clear away from
singularities and a partition of unity argument brings it down to a purely
local statement. So we focus on a neighborhood of a singularity (which may  be
non-isolated though).

Let $Y$ and $Y'$ be vector fields on $\R^2$ which vanish at the origin and have
the same orbits.
\[
Y = f\partial_x + g\partial_y\quad \text{ et }\quad Y' = f'\partial_x + g'\partial_y.
\]
We will compute divergence using the Euclidean area form 
$\omega = dx \wedge dy$ (we know the sign of divergence of singular points does
not depend on this choice). So $\Div Y = \partial_x f + \partial_y g$.
All the following assertions will be true in a neighborhood of the origin that
will shrink only finitely many times. Since $\Div(Y)$ is non-zero, we can use a
linear coordinate change to ensure that $\partial_x f$ doesn't vanish.  The
implicit function theorem then gives new coordinates such that $f(x,y) = x$.
Because 
\[
f'(x,y) = f'(0,y) + x \int_0^1 \partial_x f'(t x, y) dt
\]
we can write $f' = x u(x,y) + v(y)$. Along the curve $\{x = 0\}$, the vector
field $Y$ is vertical (or zero) so the same is true for $Y'$. Hence $f'$ also
vanishes along this curve and $v$ is identically zero.  The condition that $Y$
and $Y'$ are either simultaneously zero or colinear is then:
\[
\left|
\begin{matrix}
x & x u \\
g & g'
\end{matrix}
\right| = 0
\]
which gives $g' = u g$ where $x$ is non-zero hence everywhere by continuity.
One then gets $Y' = uY$. 
In particular $\Div Y' = u\Div Y + du \wedge (\iota_Y dx \wedge dy)$. Away from
zeros of $Y$ and $Y'$, $u$ is positive because $Y$ and $Y'$ have the same
leaves. At a common zero, $\Div Y' = u\Div Y$ and, because singularities of
$Y'$ have non-zero divergence, the function $u$ doesn't vanish. Hence it is
positive everywhere (note that $Y$ and $Y'$ can't be everywhere zero).
\end{proof}

\section{Neighborhoods of surfaces}

Any orientable surface $S$ in an orientable 3-manifold has a neighborhood
diffeomorphic to $S \times \R$ (use the flow of a vector field transverse to
$S$). We will always denote by $t$ the coordinate on $\R$ and by $S_t$ the 
surface $S \times \{t\}$ for a fixed $t$. From now on, we will assume that $S$
is oriented and orient $S \times \R$ as a product.

Any plane field $\xi$ defined near $S$ has then an equation 
$\alpha = u_t dt + \beta_t$ where $u_t$ is a
family of functions on $S$ and $\beta_t$ is a family of 1--forms on $S$.
Note that the characteristic foliation of $S_t$ is the equivalence class of
$\beta_t$ since the latter is the 1--form induced by $\alpha$ on $S_t$.

The contact condition for $\xi$ (with respect to the product orientation) is
equivalent to
\begin{gather}
	\tag{$\star$}
	\label{eqn:ccg}
	u_t d\beta_t + \beta_t \wedge (du_t - \dot\beta_t) > 0
\end{gather}
where $\dot\beta_t$ denotes $\frac{\partial \beta_t}{\partial t}$.
This condition is a non-linear partial differential relation which is not so
simple. The main thrust of the following discussion will be to simplify it
by fixing some of the terms.

\subsubsection*{Reconstruction lemmas}

The easiest case is to fix the whole family $\beta_t$. In this case the contact
condition \eqref{eqn:ccg} is only about the family $u_t$
and becomes convex. In particular the space of solutions $u_t$ is connected and
we get:

\begin{lemma}[Global reconstruction]
\label{lemma:global_reconstruction}
If $\xi$ and $\xi'$ are positive contact structures on $S \times \R$ such
that $\xi S_t = \xi' S_t$ for all $t$ then $\xi$ and $\xi'$ are isotopic.
\end{lemma}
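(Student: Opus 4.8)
The plan is to connect $\xi$ and $\xi'$ by an explicit path of contact structures along which the induced $1$--forms on the slices $S_t$ are literally fixed, and then to feed this path into Gray's theorem (Theorem~\ref{thm:gray}), taking care that the non-compactness of $S \times \R$ does not spoil the construction.

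First I would normalise the defining forms. Write $\xi = \ker\alpha$ and $\xi' = \ker\alpha'$ with $\alpha = u_t\,dt + \beta_t$ and $\alpha' = u'_t\,dt + \beta'_t$ in the notation above, so that $\xi S_t = [\beta_t]$ and $\xi' S_t = [\beta'_t]$. The hypothesis $\xi S_t = \xi' S_t$ means exactly that these are the same equivalence class, i.e. $\beta'_t = g_t\beta_t$ for a positive function $g_t$ depending smoothly on $t$; assembling the $g_t$ into a positive function $G$ on $S \times \R$ and replacing $\alpha'$ by $G^{-1}\alpha'$ (which does not change $\xi'$) I may assume from now on that $\beta'_t = \beta_t$. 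Thus $\xi$ and $\xi'$ are defined by $\alpha = u_t\,dt + \beta_t$ and $\alpha' = u'_t\,dt + \beta_t$ with the \emph{same} family $\beta_t$.

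Next I would interpolate. Set $\alpha_s := (1-s)\alpha + s\alpha' = \bigl((1-s)u_t + s u'_t\bigr)\,dt + \beta_t$ for $s \in [0,1]$, so that only the function part varies while $\beta_t$ stays frozen. The crucial point is that, with $\beta_t$ fixed, the contact condition \eqref{eqn:ccg} is \emph{affine} in the function $u_t$: the piece $u_t\,d\beta_t + \beta_t\wedge du_t$ is linear in $u_t$ while the remaining term $-\beta_t\wedge\dot\beta_t$ does not involve it. Consequently the left-hand side of \eqref{eqn:ccg} evaluated on the convex combination is the corresponding convex combination of the left-hand sides for $\alpha$ and $\alpha'$, both of which are positive area forms; hence it is again a positive area form. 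So every $\alpha_s$ is a contact form and $\xi_s := \ker\alpha_s$ is a path of positive contact structures from $\xi$ to $\xi'$.

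Finally I would run Gray's theorem on this path, and here I expect the main obstacle: Theorem~\ref{thm:gray} is stated for \emph{closed} manifolds, whereas $S \times \R$ is non-compact, so I cannot simply quote it. The way around this is to inspect the Gray vector field itself. Since $\dot\alpha_s = \alpha' - \alpha = (u'_t - u_t)\,dt$ is a multiple of $dt$, the rotation axis $\xi_s \cap \ker\dot\alpha_s$ sits inside $\ker dt = TS_t$ wherever $u_t \neq u'_t$ (and the field is $0$ elsewhere). Hence the Gray vector field $X_s$, which the proof of Theorem~\ref{thm:gray} selects inside this axis, satisfies $dt(X_s) = 0$ everywhere: it is tangent to the slices $S_t$. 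Indeed $X_s \in \xi_s$ already forces $\dot\alpha_s(X_s) = -d\alpha_s(X_s,X_s) = 0$. Its flow therefore preserves each $S_t$, and since $S$ is closed each slice is compact, so the flow is complete on all of $S \times \R$ despite the non-compactness. This produces a genuine isotopy $\varphi_s$ with $\varphi_s^*\xi_s = \xi_0$, and at $s = 1$ we obtain $\varphi_1^*\xi' = \xi$, so $\xi$ and $\xi'$ are isotopic. The only remaining points of care are the smoothness of $G$ across the singularities of the characteristic foliation (guaranteed by the very definition of the equivalence class, where the proportionality factor is required positive including at singularities) and the observation just made that $X_s$ is horizontal.
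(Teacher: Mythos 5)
Your proposal is correct and follows essentially the same route as the paper's proof: normalise so that both structures are defined by forms with the same family $\beta_t$, use convexity (affineness) of the contact condition \eqref{eqn:ccg} in $u_t$ with $\beta_t$ fixed to interpolate linearly, and apply Gray's theorem. Even your treatment of the non-compactness of $S \times \R$ --- observing that the Gray vector field is tangent to the compact slices $S_t$, hence has complete flow --- is exactly the argument the paper gives in its footnote, though you justify the tangency in more detail via $\dot\alpha_s$ being a multiple of $dt$.
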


We give a detailed proof since it is a model of several later proofs.

\begin{proof}
There are equations $u_t dt + \beta_t$ and $u'_t dt + \beta'_t$ of
$\xi$ and $\xi'$.
The hypothesis of the lemma is that $\beta'_t = f_t \beta_t$ for some family
of positive functions $f_t$ on $S$. 
So another equation for $\xi'$ is $u'_t/f_t dt + \beta_t$. 
We have two solutions $u_t$ and $u_t'/f_t$ of the contact condition, Equation
\eqref{eqn:ccg}, with $\beta_t$ fixed. Since this
condition is convex, the space of its solutions is connected so we can find a
family of solution $(u^s_t)_{s \in [0, 1]}$ relating them (a linear
interpolation will do the job).  This family corresponds to a family of contact
structures $\xi_s = \ker( u^s_t dt + \beta_t)$
which Gray's theorem (Theorem \ref{thm:gray}) converts to an isotopy
of contact structures\footnote{One may worry about the fact that $S \times \R$
is non-compact but here the vector field constructed during the proof of this
theorem is tangent to $S_t$ which is compact for all $t$ hence its flow is well
defined for all times}.

Our discussion of Gray's theorem actually tells us more about what is going on.
Recall the vector field generating the isotopy at time $s$ can be chosen in the
intersections of $\xi_s$ and $\xi_{s + \varepsilon}$. So we see the isotopy is
stationary at each singular point of the characteristic foliations $[\beta_t]$.
At all other points it is tangent to the characteristic foliation and its flow
makes the contact structures we want to relate to rotate toward each other,
see Figure \ref{fig:reconstruction}.
\begin{figure}[ht]
	\begin{center}
		\includegraphics[width=7cm]{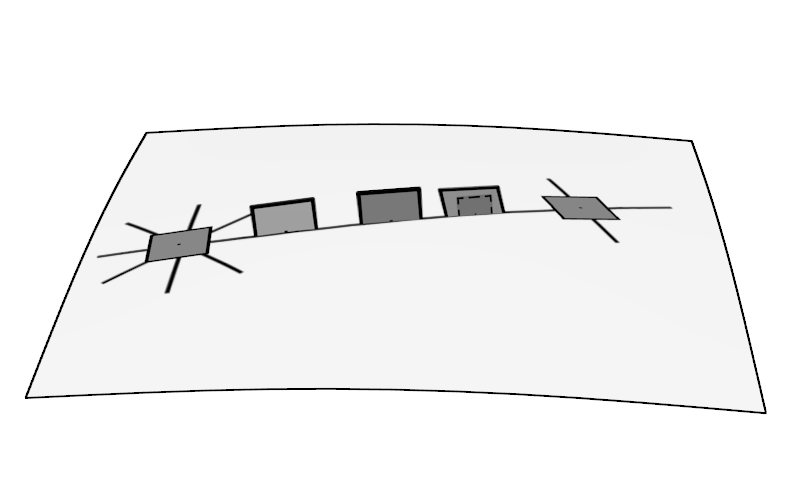}
	\end{center}
	\caption{Reconstruction lemmas. We have two contact structures printing the
	same characteristic foliation on a surface. One of them is drawn along an arc
	going from a singularity to another. The second one appears only at one point
	with dotted outline. At this point the isotopy constructed in the proof is
	tangent to the arc to make the contact structure rotate.}
	\label{fig:reconstruction}
\end{figure}
\end{proof}

If instead of fixing the whole family $\beta_t$ we fix only $\beta_0$ then 
we get the following lemma.

\begin{lemma}[Local reconstruction]
\label{lemma:local_reconstruction}
If $\xi$ and $\xi'$ are positive contact structures which prints the same
characteristic foliation on a compact embedded surface $S$ then there is a
neighborhood of $S$ on which $\xi$ and $\xi'$ are isotopic (by an isotopy
globally preserving $S$).
\end{lemma}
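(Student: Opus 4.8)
The plan is to reduce this to the global reconstruction lemma by first deforming $\xi'$, through contact structures near $S$, into one whose characteristic foliation agrees with that of $\xi$ on every nearby slice. I would work in a product neighborhood $S \times \R$ as in the preceding discussion, with equations $u_t\,dt + \beta_t$ for $\xi$ and $u'_t\,dt + \beta'_t$ for $\xi'$. The hypothesis says $[\beta_0] = [\beta'_0]$, so after rescaling $\alpha'$ by a positive function (chosen to equal $1/h$ on $S_0$, where $\beta'_0 = h\beta_0$) I may assume $\beta'_0 = \beta_0$ exactly, i.e. the two induced forms coincide on $S_0$, not merely up to scale.

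Next I would linearly interpolate the \emph{full} defining forms: set $\alpha^s := (1-s)\alpha + s\alpha'$, so that $\alpha^s = u^s_t\,dt + \beta^s_t$ with $u^s_t$ and $\beta^s_t$ the corresponding convex combinations. The crucial point is that along $S_0$ one has $\beta^s_0 = \beta_0$ for all $s$, so $d\beta^s_0$ and $\beta^s_0$ do not depend on $s$ there. Consequently the contact condition \eqref{eqn:ccg}, evaluated at $t = 0$, is \emph{affine} in $s$: only $u^s_0$, its differential, and $\dot\beta^s_0$ vary, and all of these are affine in $s$. Since the condition holds at $s = 0$ and $s = 1$ and the positive $2$--forms on $S$ form a convex cone, it holds for every $s \in [0,1]$ at each point of $S_0$. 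The contact condition being open, and $S_0$ and $[0,1]$ being compact, $\alpha^s$ is then contact on a fixed neighborhood of $S_0$ for all $s$ simultaneously. This produces a path of contact structures $\xi^s = \ker \alpha^s$ from $\xi$ to $\xi'$, all printing $[\beta_0]$ on $S_0$.

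Finally I would feed this path into Gray's theorem (Theorem~\ref{thm:gray}) to obtain an isotopy with $\varphi_s^*\xi^s = \xi^0$, so that $\xi' = (\varphi_1)_*\xi$. It remains to check the isotopy preserves $S_0$ and is supported near it. Here I would use the refinement in Gray's theorem that the generating field $X_s$ lies in $\xi^s \cap \ker \dot\alpha^s$: since $\dot\alpha^s = \alpha' - \alpha$ restricts on $S_0$ to the multiple $(u'_0 - u_0)\,dt$ of $dt$ (because $\beta'_0 = \beta_0$), at each point of $S_0$ the field $X_s$ is forced either into $TS_0$ or, where $u'_0 = u_0$, to vanish by non-degeneracy of $d\alpha^s$ on $\xi^s$. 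Thus $X_s$ is tangent to $S_0$, the flow preserves $S_0$, and, exactly as in the footnote to the global lemma, compactness of the slices guarantees the flow exists on a slightly smaller neighborhood for all $s$.

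The main obstacle is the noncompactness of $S \times \R$: one must confine the entire argument to a neighborhood of $S_0$ and ensure both that the interpolated forms stay contact there and that the Gray flow remains defined. The genuinely new idea compared with the global reconstruction lemma is the observation that fixing only $\beta_0$ already makes the contact condition affine in $s$---hence its solution set convex---\emph{on the single slice} $S_0$, which is precisely what openness then propagates to a full neighborhood.
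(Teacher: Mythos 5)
Your proof is correct and takes essentially the same route as the paper: after normalizing so that $\beta'_0 = \beta_0$, your linear interpolation makes the contact condition \eqref{eqn:ccg} along $S_0$ affine in $(u_0, \dot\beta_0)$ --- precisely the convexity the paper invokes --- and openness of the contact condition together with Gray's theorem concludes, just as in the paper's proof. Your extra verification that the Gray field $X_s$ is tangent to $S_0$ (from $\dot\alpha^s|_{S_0} = (u'_0 - u_0)\,dt$ and non-degeneracy of $d\alpha^s$ on $\xi^s$) correctly fills in the parenthetical claim that the isotopy globally preserves $S$, which the paper leaves implicit.
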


\begin{proof}
The contact condition along $S_0$ becomes a convex condition on $u_0$ and
$\dot \beta_0$. Again we can find a path of plane fields which, \emph{along
$S$}, are contact structures interpolating between $\xi$ and $\xi'$. Because the
contact condition is open, they will stay contact structures near $S$ and we can
use Gray's theorem again.
\end{proof}

\begin{exercise}
Prove that the two preceding lemmas are false for foliations.
\end{exercise}

We can now return to the challenging exercise of Chapter 1 with much better
chances of success. Recall that $\xiOT = \ker(\cos(r) dz + r\sin(r) d\theta)$.

\begin{exercise}
Use the local reconstruction lemma to prove that any neighborhood of an
overtwisted disk in a contact manifold contains a copy of 
$(\R^3, \xiOT)$. Hint: try to understand the 
characteristic foliation of the surface of Figure~\ref{fig:revolution}.
\end{exercise}

\begin{figure}[ht]
	\begin{center}
		\includegraphics[height=3cm]{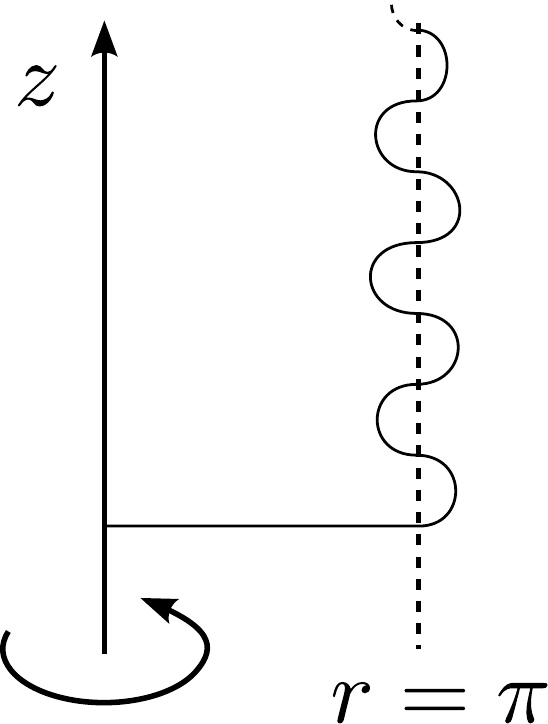}
	\end{center}
	\caption{Rotating the wavy curve around the $z$-axis in 
	$(\R^3, \xiOT)$ gives a plane having a
	characteristic foliation diffeomorphic to that of $\{z = 0\}$. Note that the
	curve is horizontal at each intersection with the $\{r = \pi \}$ axis.}
	\label{fig:revolution}
\end{figure}

As illustrated by the previous exercise, the reconstruction lemmas are already
quite useful by themselves. But the characteristic foliation is still a huge data
and it is very sensitive to perturbations of the contact structure or the
surface. This will be clear from the discussion of genericity of convex surfaces
and of the realisation lemma below.

\subsection{Convex surfaces}

\subsubsection*{Homogeneous neighborhoods}

The next step in our quest to simplify the contact condition \eqref{eqn:ccg}
seems to be fixing $u_t$ instead of $\beta_t$. But this still gives a non-linear
equation on the family $\beta_t$ if $\dot\beta_t$ is not zero. 
So we assume that $\beta_t$ does not depend on $t$: $\beta_t = \beta$. In
particular the families $(u_0, \beta)$ and $(u_t, \beta)$ both give contact
structures with the same characteristic foliation $[\beta]$ on each $S_t$. Hence
the global reconstruction Lemma tells us these contact structures are isotopic.
So we now assume that $u_t$ is also independent of $t$. 

In this situation, the contact structure itself becomes invariant under $\R$
translations, one says that $\partial_t$ is a contact vector field. Note that
this vector field is transverse to all surfaces $S_t$. Conversely if a contact
vector field is transverse to a surface then it can be cut-off away from the
surface using Remark~\ref{rem:cutoff} and then its flow defines a tubular
neighborhood $S \times \R$ with a $t$--invariant contact structure.

\begin{definition}[Giroux \cite{Giroux_91}]
A surface $S$ in a contact 3--manifold $(M, \xi)$ is $\xi$--convex if it is
transverse to a contact vector field or, equivalently, if it has a so called
homogeneous neighborhood: a tubular neighborhood $S \times \R$ where the
restriction of $\xi$ is $\R$--invariant.
\end{definition}

\begin{example}
In $T^3$ with its canonical contact structure, all tori 
$\{x = \text{constant}\}$ as in Figure \ref{fig:xiS_torus_cvx} are $\xi$--convex
since they are transverse to the contact vector field $\partial_x$.
\end{example}

\begin{example}
\label{ex:radial_field}
	In $\big(\R^3, \ker (dz + r^2 d\theta)\big)$, any Euclidean sphere around the
	origin is $\xi$--convex since they are transverse to the contact vector field
	$x\partial_x + y\partial_y + 2z\partial_z$.
\end{example}

In the convex case, the contact condition becomes: 
\begin{gather}
	\label{eqn:cccb}
	\tag{$\dagger$} u d\beta + \beta \wedge du > 0 
\end{gather} 
Using some area form $\omega$ and Equation \eqref{eqn:forms_vectors}, one can
rephrase it in terms of the vector field $Y$ $\omega$--dual to $\beta$ as: 
\begin{gather} 
	\label{eqn:cccY}
	\tag{$\dagger'$} u \Div_\omega Y - du(Y) > 0 
\end{gather}
Analogously to the previous section we see that, $u$ being fixed, the space of
solutions $\beta$ to \eqref{eqn:cccb} is contractible, this was our stated goal
when we asked $\beta_t$ to be independent of $t$. The miracle is that it
essentially stays true if one fixes only the zero set $\Gamma$ of $u$. Indeed,
away from $\Gamma$, we can divide our contact form $udt + \beta$ by $|u|$
to replace it by $\ker( \pm dt + \beta')$ where $\beta' = \frac 1{|u|} \beta$.
The condition \eqref{eqn:cccb} for $(\pm 1, \beta')$ is simply $\pm d\beta' > 0$
which is not only convex, it does not depend on $u$! Of course this discussion
needs some precise definitions which are provided below but the first miracle
has already happened: near a $\xi$--convex surface $S$, all the information
about $\xi$ is contained in $\Gamma$. It remains to see that such surfaces
are generic, the second miracle.

\subsubsection*{Dividing sets}

Let us take a look at $\Gamma = \{u = 0\}$. 
Along $\Gamma$, the contact condition \eqref{eqn:cccY} reads $-du(Y) > 0$.
So $\Gamma$ is a regular level set of $u$. Hence it is a one-dimensional
submanifold without boundary, ie a collection of disjoint simple closed curves
in $S$. Such collections will be referred to as multi-curves. 

The condition $-du(Y) > 0$ also implies that $\Gamma$ is transverse to $\xi S$.
More precisely, $Y$ goes from $S_+ = \{u > 0\}$ to $S_- = \{u < 0\}$ along
$\Gamma$ and the picture near $\Gamma$ is always as in Figure
\ref{fig:neighb_Gamma}. In the following discussion we will use several time the
fact that this picture is very simple and controlled to be less precise about
what happens near $\Gamma$.
\begin{figure}[htp]
	\begin{center}
		\includegraphics[width=5cm]{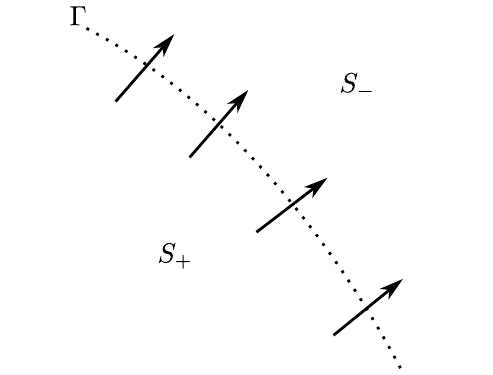}
	\end{center}
	\caption{Characteristic foliation near the dividing set $\Gamma$}
	\label{fig:neighb_Gamma}
\end{figure}

The last remarkable property of the decomposition of $S$ in $S_+$ and $S_-$ is
$Y$ expands some area form in $S_+$ and contracts it in $S_-$.
Indeed, if one sets $\Omega = \frac 1{|u|}\omega$ on $S \setminus \Gamma$ then 
$\Div_\Omega Y = \pm \frac 1{u^2}$ on $S_\pm$. One can actually modify $\Omega$
near $\Gamma$ so that $\Div_\Omega Y$ is positive on $S_+$, negative on $S_-$
and vanishes along $\Gamma$.

\begin{definition}
A singular foliation $\F$ of a surface $S$ is divided by an (embedded) multi-curve 
$\Gamma$ if there is some area form $\Omega$ on $S$ and a vector field $Y$
directing $\F$ such that:
\begin{itemize}
\item 
the divergence of $Y$ does not vanish outside $\Gamma$ --we set 
\[
S_\pm = \{p \in S ;\; \pm\Div_\Omega Y(p) > 0\}
\]
\item
the vector field $Y$ goes transversely out of $S_+$ and into $S_-$ along
$\Gamma$.  
\end{itemize}
\end{definition}

What we proved above is that the characteristic foliation of a $\xi$--convex
surface is divided by some multi-curve. Using the local reconstruction lemma
(Lemma \ref{lemma:local_reconstruction}), one can prove the converse to get:

\begin{proposition}
\label{prop:convex_is_divided}
A surface $S$ is $\xi$--convex if and only if $\xi S$ is divided.
\end{proposition}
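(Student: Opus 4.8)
The forward implication has essentially been carried out in the discussion preceding the statement: the characteristic foliation of a $\xi$--convex surface inherits, from a homogeneous equation $u\,dt+\beta$, dividing data $(\Omega,Y,\Gamma)$. So the plan is to prove the converse, and the natural strategy is to manufacture out of abstract dividing data a homogeneous (hence obviously convex) model printing the prescribed characteristic foliation on $S$, and then transport $\xi$--convexity back to $\xi$ through the local reconstruction Lemma~\ref{lemma:local_reconstruction}.

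Concretely, suppose $\xi S$ is divided by $\Gamma$, with area form $\Omega$ and directing vector field $Y$. First I would set $\beta:=\iota_Y\Omega$, so that $[\beta]$ has the same leaves as $\xi S$; since the singularities of both foliations (the zeros of $Y$, which all lie outside $\Gamma$, where $\Div_\Omega Y\neq 0$) have non-zero divergence, Giroux's lemma on singular foliations upgrades this to the genuine equality $[\beta]=\xi S$. Next I would build a function $u$ on $S$ with $\{u=0\}=\Gamma$, equal to $+1$ on $S_+$ and to $-1$ on $S_-$ away from a thin collar of $\Gamma$, and interpolating monotonically across $\Gamma$ inside the collar. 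The claim is then that the $\R$--invariant plane field $\xi':=\ker(u\,dt+\beta)$ on a neighborhood $S\times\R$ is contact, i.e. that it satisfies \eqref{eqn:cccb}, equivalently \eqref{eqn:cccY} in the form $u\,\Div_\Omega Y-du(Y)>0$.

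The verification of this inequality is the technical heart of the argument and the step I expect to be the main obstacle. Away from the collar $u$ is locally constant, so $du(Y)=0$ and the left-hand side reduces to $u\,\Div_\Omega Y$, which is positive because $u$ and $\Div_\Omega Y$ share their sign by the very definition of $S_\pm$. Inside the collar I would use coordinates $(s,\gamma)$ with $\Gamma=\{s=0\}$, $S_+=\{s<0\}$, $S_-=\{s>0\}$, and take $u=\chi(s)$ with $\chi'\le 0$ and $\chi'(0)<0$; writing $Y=a\,\partial_s+b\,\partial_\gamma$, the transversality of $Y$ to $\Gamma$ (it leaves $S_+$ and enters $S_-$) forces $a>0$ on the collar provided it is thin enough. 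Then $u\,\Div_\Omega Y\ge 0$, with equality only on $\Gamma$ because $\Div_\Omega Y$ has the sign of $S_\pm$ off $\Gamma$, while $-du(Y)=-\chi'(s)\,a\ge 0$, with strict positivity on $\Gamma$ since $\chi'(0)<0$. These two non-negative terms never vanish simultaneously, so their sum is positive throughout the collar. This is exactly where the ``simple and controlled'' picture of the foliation near $\Gamma$ (Figure~\ref{fig:neighb_Gamma}) is used.

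With $\xi'$ in hand, $\partial_t$ is a contact vector field for $\xi'$ transverse to every $S_t$, so $S$ is $\xi'$--convex. Since $\xi'$ and $\xi$ print the same characteristic foliation $\xi S$ on the compact surface $S$, Lemma~\ref{lemma:local_reconstruction} supplies an isotopy, preserving $S$ globally, carrying $\xi'$ to $\xi$ on a neighborhood of $S$. Finally I would observe that $\xi$--convexity is preserved under such an isotopy: the push-forward of a contact vector field for $\xi'$ is a contact vector field for the push-forward of $\xi'$, and it remains transverse to $S$ because the isotopy preserves $S$. Hence $S$ is $\xi$--convex, which completes the converse and the proof.
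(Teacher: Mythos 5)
Your proposal is correct and follows essentially the same route as the paper: build an $\R$--invariant model $\xi'=\ker(u\,dt+\beta)$ with $\beta=\iota_Y\Omega$ printing $\xi S$, then transfer $\xi$--convexity via the local reconstruction lemma. The only difference is that you carry out in full the collar verification of the contact condition across $\Gamma$ (the choice of $u=\chi(s)$ and the sign analysis of $u\,\Div_\Omega Y-du(Y)$), which the paper compresses into ``one can check that there is no problem to extend it along $\Gamma$''---your details are accurate.
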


\begin{proof}
We assume that $\xi S$ is divided by some multi-curve $\Gamma$. According to the
local reconstruction lemma, we only need to prove that there is a contact
structure $\xi'$ defined near $S$ such that $S$ is $\xi'$--convex and 
$\xi'S = \xi S$.
We set $\beta = \iota_Y \Omega$. In particular $\xi S = [\beta]$.
On $S \setminus \Gamma$, $\xi' = \ker \pm dt + \beta$ is a contact structure
which also prints $[\beta]$ on $S \setminus \Gamma$ and one can check that there
is no problem to extend it along $\Gamma$.
\end{proof}

Note that the dividing set is not unique for a given foliation. If $X$ is a
contact vector field transverse to the surface $S$ then the considerations above
prove that $\Gamma_X := \{ s \in S;\; X(s) \in \xi\}$ is a dividing set for $S$.

However, if one fixes $\beta$ in the contact condition \eqref{eqn:cccb}, it
becomes convex in $u$, hence the space of solutions $u$ is connected. This
implies that the space of multi-curves dividing a given foliation is connected
(in fact contractible).

\paragraph{Examples}

In the case of spheres of example \ref{ex:radial_field}, the dividing set
corresponding to the given vector field is the equator $\{ z = 0 \}$.

In the torus case of Figure \ref{fig:xiS_torus_cvx}, the dividing set coming
from $\partial_x$ is defined by $\cos(z) = 0$ so it is made of two circles
sitting between the singularity circles defined by $\sin(z) = 0$, see Figure
\ref{fig:gamma_convex_torus}.
\begin{figure}[ht]
	\begin{center}
		\includegraphics[width=10cm]{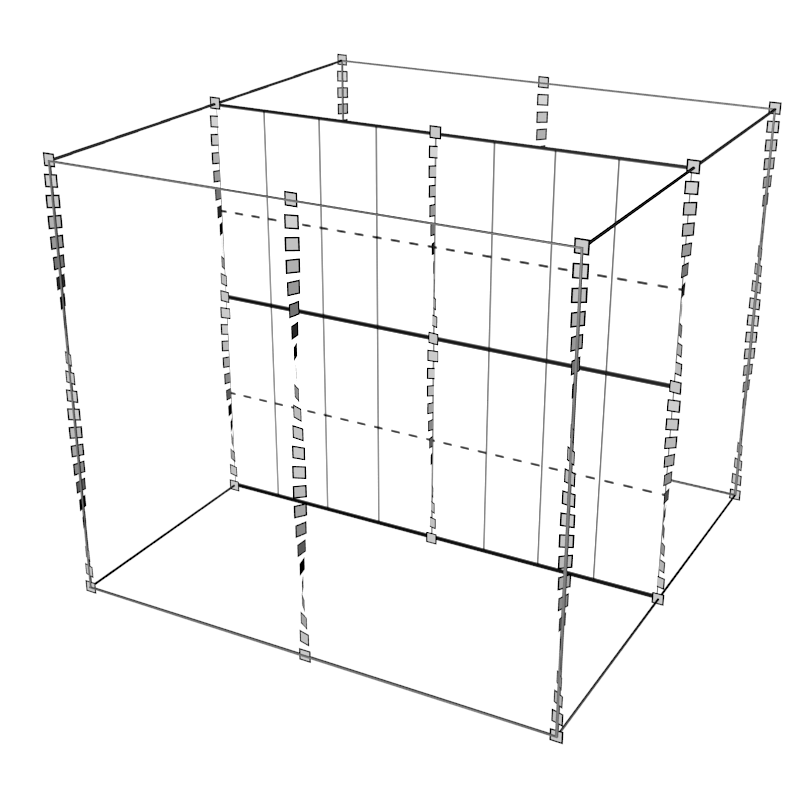}
	\end{center}
	\caption{A dividing set for the torus of Figure \ref{fig:xiS_torus_cvx}
	(dashed on the picture)}
	\label{fig:gamma_convex_torus}
\end{figure}

\subsubsection*{The realization lemma}

We are now ready to make precise the fact that the dividing set contains all the
information about the contact structure near a convex surface.

\begin{lemma}[Realization Lemma]
\label{lemma:realization}
\index{lemme!de réalisation}
Let $S$ be a $\xi$--convex surface divided by some multi-curve $\Gamma$. For any
singular foliation $\F$ divided by $\Gamma$, there is an isotopy $\delta_t$ with
support in an arbitrarily small neighborhood of $S$ and such that 
$\xi' = \delta_1^*\xi$ satisfies $\xi' S = \F$. 
Equivalently, one has $\xi \delta_1(S) = \delta_1(\F)$.
\end{lemma}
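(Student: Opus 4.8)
The plan is to realize $\F$ by sliding $S$ vertically to a nearby graph inside a homogeneous neighborhood, so that the isotopy is completely explicit. Since $S$ is $\xi$--convex, I would fix a homogeneous neighborhood $N = S \times \R$ on which $\xi = \ker(\beta + u\,dt)$ with $u$ and $\beta$ independent of $t$ and $\Gamma = \{u = 0\}$, so that $\xi S = [\beta]$. For a function $f \colon S \to \R$, pulling $\beta + u\,dt$ back along $x \mapsto (x, f(x))$ shows that the graph $S_f = \{(x, f(x))\}$ carries the characteristic foliation $[\,\beta + u\,df\,]$. Moreover the vertical isotopy $\delta_\tau(x, t) = (x, t + \tau f(x))$ sends $S_0$ to $S_f$ and identifies, through the projection $N \to S$, the characteristic foliation of $\xi' := \delta_1^*\xi$ on $S$ with $[\,\beta + u\,df\,]$. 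Exploiting the $\R$--invariance of $\xi$ to rescale the $t$--coordinate and cutting $\delta_\tau$ off (Remark~\ref{rem:cutoff}), I can keep the support in an arbitrarily small neighborhood of $S$. Everything then reduces to finding $f$ with $[\,\beta + u\,df\,] = \F$.

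To find such an $f$, I would first use that both $\xi S = [\beta]$ and $\F$ are divided by $\Gamma$, hence near $\Gamma$ both are in the standard model of Figure~\ref{fig:neighb_Gamma} (regular leaves crossing $\Gamma$ transversally from $S_+$ into $S_-$). A preliminary ambient isotopy of the form $(x,t)\mapsto(\psi_\tau(x),t)$, with $\psi_\tau$ supported near $\Gamma$ and isotopic to the identity, lets me assume $\xi S = \F$ on a collar of $\Gamma$; I then look for $f$ vanishing on that collar, so the equation holds there automatically (where $u\,df = 0$) and only has to be solved on $S_\pm$. On $S_+$, dividing by $u > 0$ and writing $\beta_+ := \beta/u$, the contact condition becomes $d\beta_+ > 0$. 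Choosing $Y$ and an area form $\Omega$ dividing $\F$, I set $\beta_1 := \iota_Y\Omega$; then $d\beta_1 = (\Div_\Omega Y)\,\Omega$ by \eqref{eqn:forms_vectors}, with $\Div_\Omega Y > 0$ on $S_+$. Writing the target identity as $\beta_+ + df = \mu\,\beta_1$ for an unknown positive $\mu$, differentiating, and using $d\mu\wedge\iota_Y\Omega = (Y\mu)\,\Omega$, I obtain the first--order linear ODE
\[
Y\!\cdot\!\mu + (\Div_\Omega Y)\,\mu = h, \qquad d\beta_+ = h\,\Omega,\ h > 0,
\]
to be integrated along the leaves of $\F$; the symmetric computation handles $S_-$.

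This ODE has positive coefficient $\Div_\Omega Y$ and positive forcing $h$, so along each leaf the integrating factor $\exp\!\big(\int \Div_\Omega Y\big)$ turns any positive initial value into a positive solution, and on a closed leaf produces a unique positive periodic solution. This yields a positive $\mu$, hence a closed $1$--form $\mu\beta_1 - \beta_+$ on $S_+$. The hard part is the global bookkeeping: to integrate this closed form to an honest function $f$ I must also kill its periods over the cycles of $S_+$ and glue the primitives on $S_+$, $S_-$ and across the collar where $f$ was forced to vanish, all while keeping $\mu$ smooth and positive. I would spend the remaining freedom — the initial data of $\mu$ along a transversal to $\F$, together with the choice of $\Omega$ and of the representative of $\F$ — to arrange exactness and smooth matching; this is the only genuine obstacle, the local solvability and positivity being automatic.

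Once $f$ is produced, the vertical isotopy of the first paragraph gives $\xi' S = \F$, which is exactly the assertion (equivalently $\xi\,\delta_1(S) = \delta_1(\F)$ after pushing forward). As a sanity check, since each intermediate surface $S_{\tau f}$ is transverse to the contact vector field $\partial_t$, it is convex with dividing set $\Gamma$ throughout the isotopy, in line with Proposition~\ref{prop:convex_is_divided}; note also that, unlike in Lemma~\ref{lemma:local_reconstruction}, no appeal to Gray's theorem is needed here because the realizing isotopy $\delta_\tau$ is written down directly.
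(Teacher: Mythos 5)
Your reduction ``everything then reduces to finding $f$ with $[\beta + u\,df] = \F$'' is where the proof breaks, and the global bookkeeping you defer is not a technicality that your remaining freedom can absorb: it is a genuine, invariant obstruction, so the equation you propose to solve has in general \emph{no} solution. Suppose $L$ is a closed leaf of $\F$ contained in $S_+$ and write $\beta_+ = \beta/u$. If $\beta + u\,df = g\beta_1$ with $g>0$ and $\F = [\beta_1]$, then $\beta_1$ vanishes on $TL$, hence $(\beta_+ + df)|_{TL} = 0$ and $\int_L \beta_+ = 0$ --- a condition independent of $f$, of $\mu$, of $\Omega$ and of the representative $\beta_1$. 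Your own ODE confirms that no freedom survives there: along $L$ the periodic solution $\mu$ is unique, and $\int_L \mu\beta_1 = 0$ whatever $\mu$ is, so the period of the closed form $\mu\beta_1 - \beta_+$ along $L$ equals $-\int_L\beta_+$ no matter how you spend your initial data. This fails in the paper's simplest example: for the convex torus of Figure~\ref{fig:xiS_torus_cvx} one has $u = \cos z$, $\beta = -\sin z\,dy$, so $\beta_+ = -\tan z\,dy$; a foliation as in Figure~\ref{fig:xiS_MS_torus} but with its repelling closed leaf on the circle $\{z = z_0\}$, $0 < z_0 < \pi/2$, is still divided by the same $\Gamma$ (adjust $Y$ and $\Omega$ in the recurrence-free band between $z_0$ and $\pi/2$), yet $\int_L \beta_+ = -2\pi\tan z_0 \neq 0$. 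So this $\F$ is not the characteristic foliation of any graph $S_f$, and your preliminary isotopy supported near $\Gamma$ does not help, since it leaves $\beta_+$ unchanged near $L$. The moral is that vertical shears preserve more than the dividing set: they cannot move closed leaves (or their holonomy) across the surface, whereas two foliations divided by the same $\Gamma$ differ precisely by such horizontal motions.

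This is exactly why the paper does not attempt an explicit realizing isotopy. Its proof normalizes the contact form away from $\Gamma$ to $\pm dt + \beta'$, where the contact condition \eqref{eqn:cccb} becomes $\pm d\beta' > 0$ --- convex and independent of $u$ --- interpolates linearly between the normalized form for $\xi$ and a $t$--invariant structure printing $\F$ (built as in Proposition~\ref{prop:convex_is_divided}, after matching the two foliations near $\Gamma$ where the picture of Figure~\ref{fig:neighb_Gamma} is standard), and then converts this path of contact structures into an isotopy via Gray's theorem (Theorem~\ref{thm:gray}), cut off near $S$ exactly as in the reconstruction lemmas. Gray's vector field is tangent to the characteristic foliations, i.e.\ it moves points \emph{along} leaves --- precisely the horizontal degree of freedom your graph ansatz forbids and which is needed to place closed leaves correctly. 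Your closing ``sanity check'' that no appeal to Gray is needed is therefore the symptom of the gap rather than a bonus.
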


So any singular foliation divided by $\Gamma$ is printed on $S$ by some contact
structure isotopic to $\xi$ or, equivalently, it can be realized as the
characteristic foliation of a surface isotopic to $S$.

The proof of this very important lemma has already been essentially explained
right after stating condition \eqref{eqn:cccb}. It follows from the fact that
$\pm d\beta > 0$ is a convex condition and Gray's theorem as in the
reconstruction lemmas.

This lemma is often called Giroux's flexibility theorem but one can argue that
it is rather a rigidity result since all the information can be stored into a
tiny combinatorial data: the isotopy class of the dividing set.

\paragraph{Example}

Consider the convex torus of Figure \ref{fig:xiS_torus_cvx}. Its characteristic
foliation is highly non generic since it has two circles of singularities.  Yet
it is divided by two circles parallel to the singularity circles. 
Figure \ref{fig:xiS_MS_torus} shows a generic foliation divided by the same
curves but where singular circles have been replaced by regular closed leaves. 
\begin{figure}[htp]
	\begin{center}
		\includegraphics{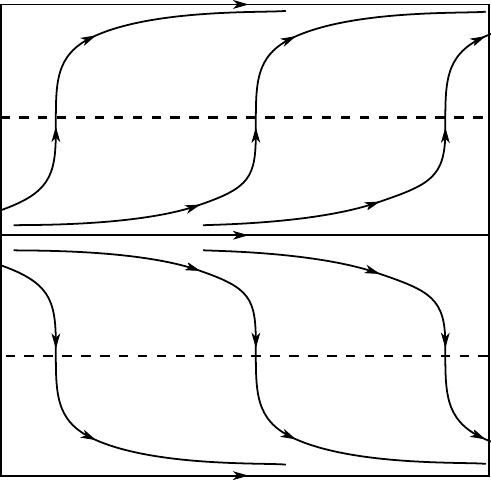}
	\end{center}
	\caption{A generic foliation of the torus divided by two curves}
	\label{fig:xiS_MS_torus}
\end{figure}

The realization lemma implies that the surface of
Figure~\ref{fig:xiS_torus_cvx} is isotopic to a surface which has
Figure~\ref{fig:xiS_MS_torus} as its characteristic foliation.
Figure \ref{fig:realised_torus} shows this surface explicitly.
\begin{indented}
The transition between these foliations play an important role in the
classification of tight contact structures on the product of a torus and an
interval, see \cite[Section 1.F]{Giroux_2000}
\end{indented}

\begin{figure}[htp]
	\begin{center}
		\includegraphics[width=11cm]{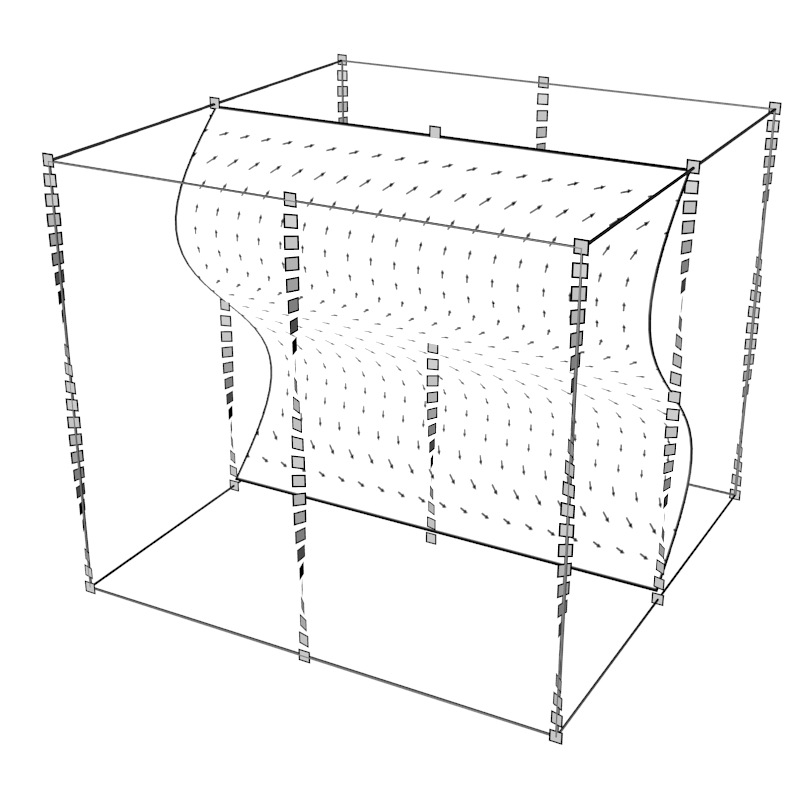}
	\end{center}
	\caption{A realization of Figure \ref{fig:xiS_MS_torus} as a deformation of
	the torus of Figure \ref{fig:xiS_torus_cvx}}
	\label{fig:realised_torus}
\end{figure}

In order to use the power of the realization lemma, we need to prove that
$\xi$-convex surfaces exist in abundance. We will first discuss some
obstructions to $\xi$-convexity then prove genericity of $\xi$-convex surfaces.

\subsection{Obstructions to convexity}

\subsubsection*{Degenerate closed leaves}

The most obvious obstruction to $\xi$-convexity for a closed surface $S$ is 
when $\xi S$ is defined by some $\beta$ with $d\beta = 0$, as in Figure
\ref{fig:xiS_prelag}, because then the contact condition \eqref{eqn:cccb} becomes
$\beta \wedge du > 0$ which implies that $u$ has no critical point. 
\begin{indented}
Surfaces with such characteristic foliations are called pre-Lagrangian. They are
either tori or Klein bottles and play an important role in some later part of
the theory.
\end{indented}

This obstruction idea can be extended remarking that it does not need the
whole of $S$, it can be applied along a closed leaf $L$ of $\xi S$. This is
easier to see in the dual picture of equation \eqref{eqn:cccY}.  Indeed, if
$\Div_\omega(Y)$ vanishes along $L$, condition \eqref{eqn:cccY} says that
$-u_{|L}' > 0$ whereas the restriction $u_{|L}$ necessarily has some critical
point.

\begin{definition}
	A closed leaf $L$ of a singular foliation is degenerate if there is a 1--form
	$\beta$ defining the foliation near $L$ and whose differential $d\beta$
	vanishes along $L$.  A non-degenerate leaf is called repelling (resp
	attracting) if there is some $\beta$ such that $d\beta$ is positive (resp
	negative) along $L$.
\end{definition}

The definition above is convenient for our purposes but one should keep in mind
that it is equivalent to the more geometrical definition through Poincar\'e's
first return map $\pi$ on a transverse curve $c$, see Figure
\ref{fig:poincare}. A closed leaf is degenerate if $\pi'(0) = 1$. A
non-degenerate closed leaf is attracting if $\pi'(0) < 1$ and repelling if
$\pi'(0) > 1$.
\begin{figure}[htp]
	\begin{center}
		\includegraphics{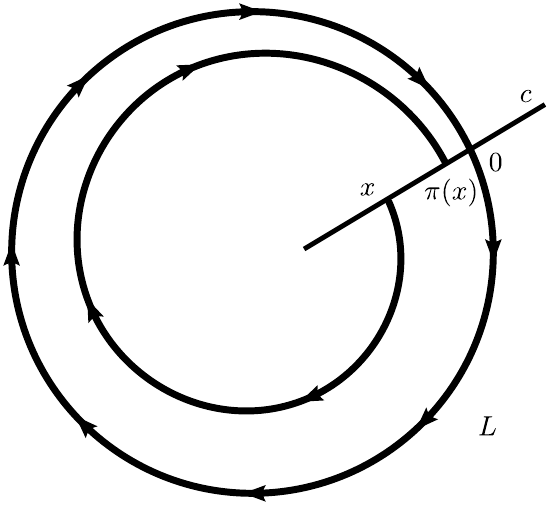}
	\end{center}
	\caption{Poincar\'e's first return map $\pi$ on a transversal $c$ to a closed
	leaf $L$.}
	\label{fig:poincare}
\end{figure}

The discussion preceding the definition proves that if $S$ is $\xi$--convex then
$\xi S$ has no degenerate closed leaves. 

\begin{figure}[htp]
	\begin{center}
		\includegraphics[width=8cm]{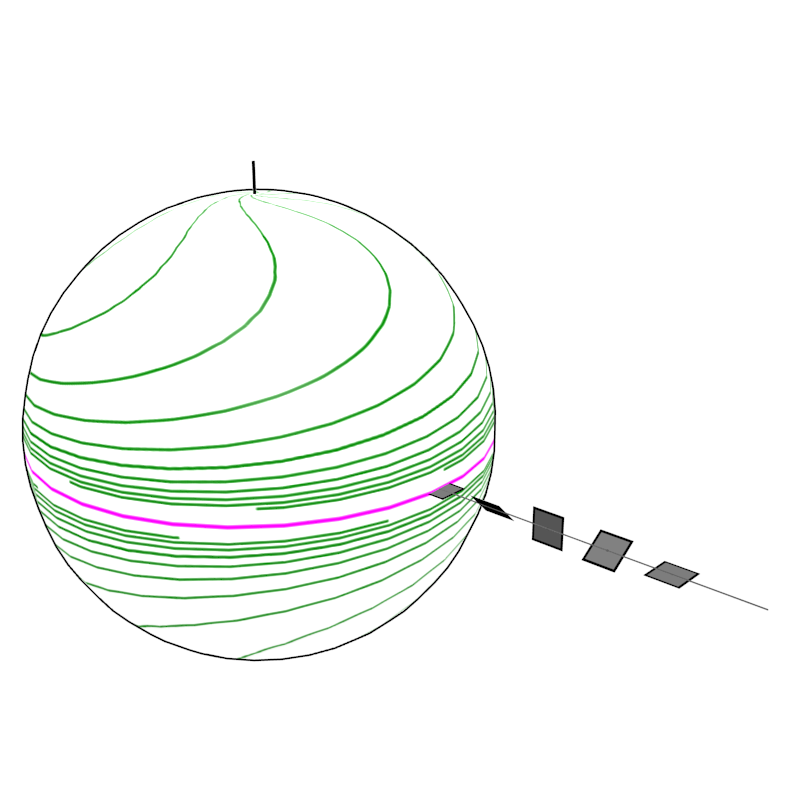}
	\end{center}
	\caption{A sphere or radius $\pi$ in the overtwisted
	$\R^3$. The equator is a degenerate closed leaf. Note how leaves spiral a lot
	more around a degenerate leaf than around a non-degenerate.}
	\label{fig:deg_leaf}
\end{figure}

\begin{figure}[htp]
	\begin{center}
		\includegraphics[width=8cm]{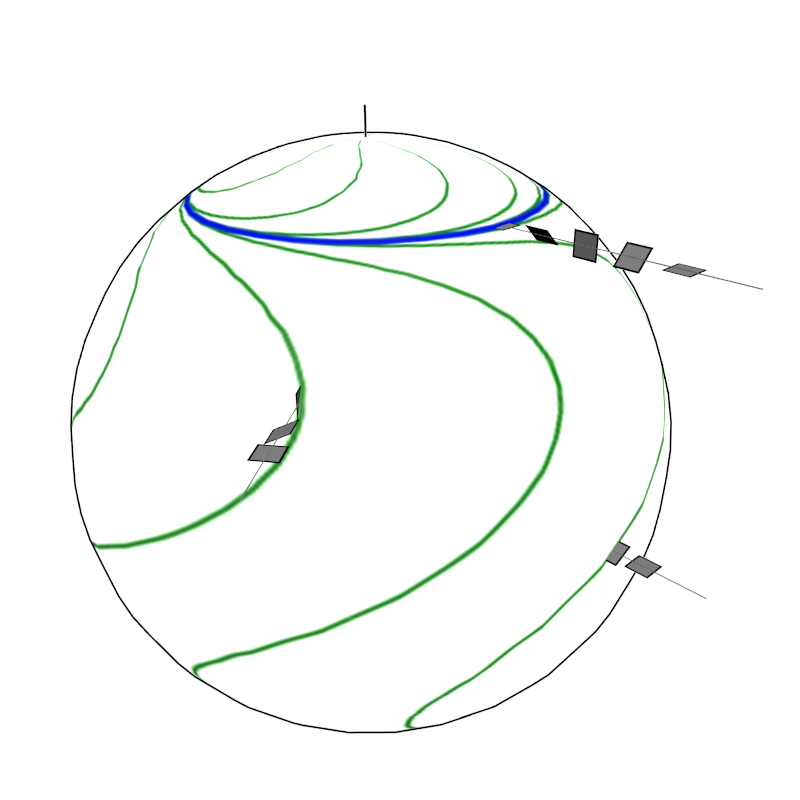}
	\end{center}
	\caption{A sphere or radius slightly less than $2\pi$ in the overtwisted
	$\R^3$. The intersection with the cylinder $\{r = \pi \}$ consists of two
	non-degenerate closed leaves (one of them is not visible in the
	picture).}
	\label{fig:non_deg_leaf}
\end{figure}

\begin{remark}
\label{rem:orbit_sign}
Suppose now that $S$ is indeed $\xi$--convex and $L$ is a (non-degenerate)
closed leaf of $\xi S$. Let $\Gamma$ be a dividing set for $\xi S$. Because 
$\xi S$ is transverse to $\Gamma$ and always goes out of $S_+$ and into $S_-$,
$L$ cannot meet $\Gamma$. Because $L$ is compact, the restriction of $u$ to $L$
has at least one critical point. At this point, the contact condition gives $u
d\beta > 0$. So repelling orbits are in $S_+$ and attracting orbits are in $S_-$.
\end{remark}

\subsubsection*{Retrograde connections}

Recall from Section \ref{sec:char_foliation} that the contact condition ensures
that all singularities of characteristic foliations have non-zero divergence
and hence have non-zero sign. Singularities of $\xi S$ correspond to points
where $S$ is tangent to $\xi$ and they are positive or negative depending on
whether the orientation of $\xi$ and $S$ match or not.

In generic characteristic foliations one sees only two topological types of
singularities: nodes and saddles. If one considers generic families of
characteristic foliations then saddle-nodes may appear, see Figure
\ref{fig:singularities}.
\begin{figure}[htp]
	\begin{center}
		\includegraphics[width=8cm]{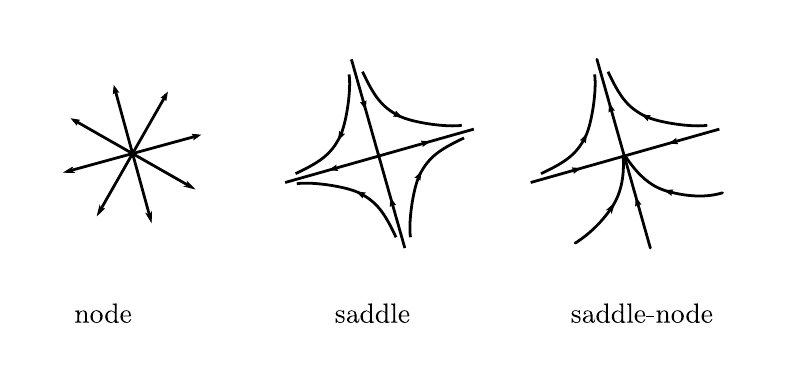}
	\end{center}
	\caption{Generic singularities of characteristic foliations}
	\label{fig:singularities}
\end{figure}
Since the sign of singularities corresponds to their divergence, positive nodes
are always sources while negative nodes are always sinks. The sign of saddles
cannot be read from topological pictures only.

Let $S$ be a $\xi$--convex surface so that $\xi = \ker(udt + \beta)$ near $S$.
We begin by a remark analogous to Remark \ref{rem:orbit_sign}.
At any singular point $p$ of $\xi S$, the contact condition \eqref{eqn:cccb}
give $ud\beta(p) > 0$. So singularities are positive in $S_+$ and negative in
$S_-$.

Suppose now that $p$ and $q$ are two singular points of $\xi S$ with opposite
signs and there is a regular leaf $L$ of $\xi S$ going from $p$ to $q$. Because
$L$ has to be transverse to $\Gamma$ and go from $S_+$ to $S_-$, the above
discussion proves that $p$ is positive and $q$ is negative.

\begin{definition}
In the characteristic foliation of a surface, a retrograde connection is a
leaf which goes from a negative singularity to a positive one.
\end{definition}

The discussion above proves that $\xi$--convex surfaces have no
retrograde connections. Note that retrograde connections cannot involve
nodes since the sign of nodes determine the local orientations of the
foliation. 

Leaves of characteristic foliations between two singularities of opposite signs
are always arcs tangent to the contact structure along which the contact
structure rotates half a turn compared to the surface. What makes retrograde
connections special is that the direction of rotation is opposite to the one
around Legendrian foliations.

\begin{example}[{\cite[Example 3.41]{Giroux_2000}}]
\label{example:connection_selle}
In $\R^2 \times \S^1$ with contact structure 
$\xi = \ker(\cos(2\pi z)dx - \sin(2\pi z)dy)$, we consider the family of
transformations 
\[
\varphi_t((x,y),z) = (R_{-4\pi t}(x,y), z+t)
\]
where $R_\theta$
denotes the rotation of angle $\theta$ around the origin of $\R^2$.
The orbit of a circle in $\R^2$ passing through the origin sweeps a torus $S$
whose characteristic foliation has two retrograde saddle connections along the
$z$-axis, see Figure \ref{fig:csr}. Indeed, along this axis, the tangent plane
$TS$ turns in the same direction as $\xi$ but twice as fast. It means that, seen
from $TS$, $\xi$ rotates one turn in the opposite direction. See Figure
\ref{fig:csr_texture} for a better view of the characteristic foliation.
\end{example}
\begin{figure}[p]
	\begin{center}
		\includegraphics[width=10cm]{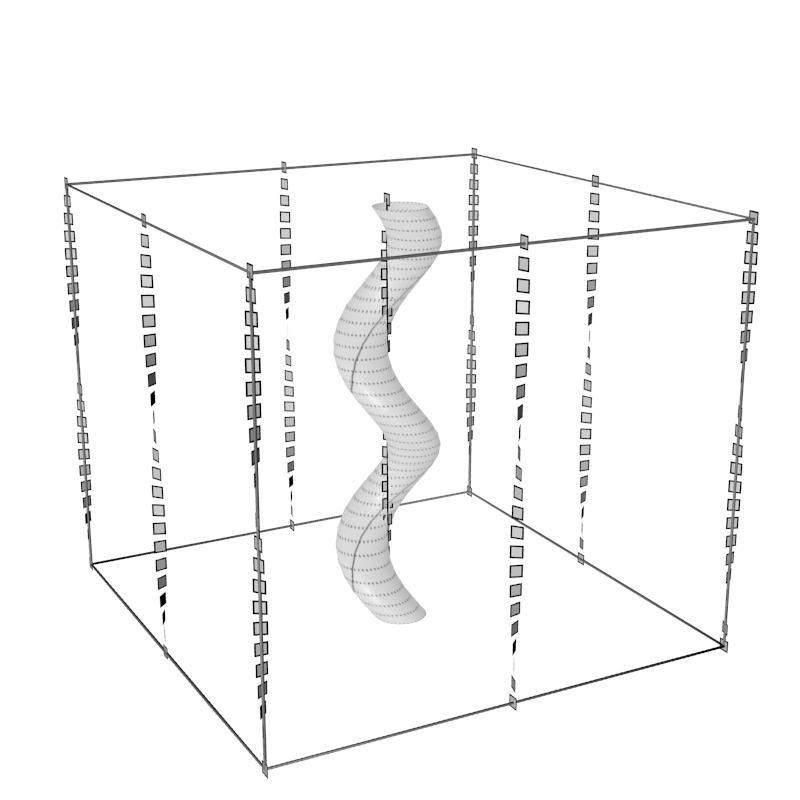}
	\end{center}
	\caption{A torus having a retrograde saddle connection}
	\label{fig:csr}
\end{figure}
\begin{figure}[p]
	\begin{center}
	\includegraphics[height=7cm]{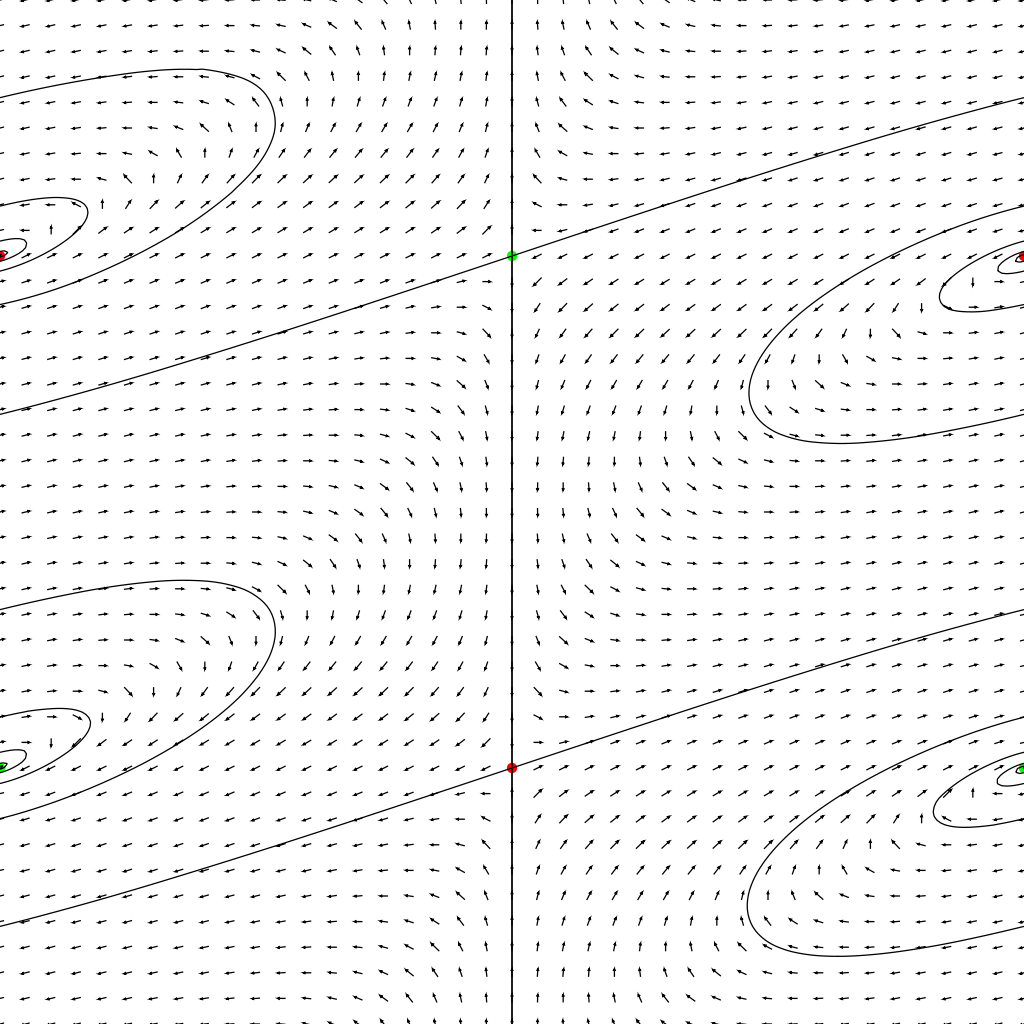}
	\end{center}
	\caption{A (double) saddle connection on the torus of Figure \ref{fig:csr}
	after top/bottom and left/right are glued. The top saddle is negative, the
	bottom one positive. The top node is positive, the bottom one negative. The
	curves drawn are all the separatrices of the saddles.}
	\label{fig:csr_texture}
\end{figure}

\subsection{Genericity of convex surfaces}

We are now ready to use generic properties of vector fields on surfaces to prove
that any surface in a contact manifold can be perturbed to a $\xi$--convex one.
See Figures \ref{fig:genericity_avant} and \ref{fig:genericity} for an example
and \cite[Proof of Proposition 2.10]{Giroux_2001} for more examples of the same
kind.

\begin{proposition}
Any closed surface in a contact 3--manifold $(M, \xi)$ is $C^\infty$--close to
a $\xi$--convex surface.
\end{proposition}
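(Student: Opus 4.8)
The plan is to reduce the statement, via Proposition~\ref{prop:convex_is_divided}, to a purely two-dimensional genericity question about the characteristic foliation: I will show that an arbitrarily $C^\infty$--small perturbation of $S$ produces a surface whose characteristic foliation is \emph{divided}. The guiding principle is that the only obstructions to being divided that we have identified so far---degenerate closed leaves and retrograde connections---are non-generic phenomena, so after a small perturbation they disappear, and for a foliation free of them a dividing multi-curve can be built by hand.

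First I would set up the perturbation dictionary. Working in a tubular neighborhood $S \times \R$ with $\alpha = u_t\,dt + \beta_t$ as before, I replace $S = S_0$ by the graph $S_\phi = \{t = \phi(s)\}$ of a $C^\infty$--small function $\phi \colon S \to \R$; pulling $\alpha$ back to $S_\phi$ yields the $1$--form $\beta_\phi + u_\phi\,d\phi$ directing $\xi S_\phi$. Differentiating at $\phi = 0$, the characteristic foliation changes, modulo scaling, by $\dot\beta_0\,\phi + u_0\,d\phi$. The term $u_0\,d\phi$ already contributes all exact forms away from the singular locus (where $\beta_0$ vanishes but $u_0$ need not), so tilting the surface gives ample freedom to rotate and displace the line field $\xi S$. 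I would use this freedom, together with a jet--transversality argument of Kupka--Smale/Peixoto type, to arrange after a small perturbation that $\xi S$ has only finitely many singularities, all non-degenerate (their divergence is already non-zero by the contact condition, so they are genuine nodes and saddles), only finitely many closed leaves, all non-degenerate, and in particular no retrograde connection.

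Granting such a tame foliation, I would then build a dividing set directly, making precise the heuristic of \eqref{eqn:cccY} and matching the definition of ``divided'' on the nose. Dualizing $\xi S = [\beta]$ to a vector field $Y$ with divergence $g = \Div_\omega Y$, I note that replacing $\omega$ by $\Omega = e^h\omega$ changes the divergence to $\tilde g := g + dh(Y)$. Since $dh(Y)$ is the derivative of $h$ along the flow of $Y$, its integral over any closed leaf $L$ vanishes, so the sign of $\int_L g$ is a flow invariant; $L$ being non-degenerate, it is non-zero, and I can then choose $h$ so that $\tilde g$ is a non-zero constant along $L$, positive on repelling leaves and negative on attracting ones. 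At each singularity $Y$ vanishes, so $\tilde g = g$ keeps the sign of the singularity (recall Remark~\ref{rem:orbit_sign}). Choosing $h$ so that $\tilde g > 0$ on a neighborhood $S_+$ of the repellers and $\tilde g < 0$ on a neighborhood $S_-$ of the attractors, the absence of retrograde connections guarantees that every regular leaf runs from a positive $\alpha$--limit to a negative $\omega$--limit, so $S_+$ and $S_-$ may be taken disjoint with complement a union of bands of such leaves; I place $\Gamma = \{\tilde g = 0\}$ as a regular multi-curve inside these bands, with $Y$ leaving $S_+$ and entering $S_-$ along it. This exhibits $\xi S$ as divided (with area form $\Omega$ and vector field $Y$), whence $S$ is $\xi$--convex by Proposition~\ref{prop:convex_is_divided}.

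The main obstacle is the first step: controlling which perturbations of $\xi S$ are realizable by perturbing $S$ and showing they suffice for genericity. Unlike the realization Lemma~\ref{lemma:realization}, here $S$ is not yet assumed $\xi$--convex, so I cannot invoke Gray's theorem to transport the perturbation; instead I must verify honestly that the infinitesimal changes $\dot\beta_0\,\phi + u_0\,d\phi$ form a rich enough family to place the foliation in general position, and that the standard dynamical genericity results survive under this constrained class of deformations. Concretely, breaking a retrograde connection and pushing a degenerate closed leaf to a non-degenerate one are the delicate local moves that have to be implemented by explicit small deformations of the surface supported near the offending leaf.
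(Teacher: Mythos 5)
Your proposal has a genuine gap, and it is exactly the step you flag as ``the main obstacle'': the reduction from genericity of nice (Morse--Smale) foliations to genericity of $\xi$--convex \emph{surfaces} is never carried out. Your infinitesimal computation $\dot\beta_0\,\phi + u_0\,d\phi$ is a reasonable start, but turning it into a Kupka--Smale/Peixoto-type transversality statement for this constrained family of deformations is serious work (e.g.\ along a leaf where $u_0$ vanishes, the exact term contributes nothing, and breaking a retrograde connection or a degenerate leaf by an explicit graph deformation is delicate), and you leave it entirely open. Moreover, your stated reason for taking this hard road --- that Gray's theorem is unavailable because $S$ is not yet $\xi$--convex --- is mistaken. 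Gray's theorem needs a path of contact structures, not a convex surface, and this is precisely how the paper sidesteps your obstacle: writing $\xi = \ker(u_t\,dt + \beta_t)$ near $S$, one can interpolate $\beta_t \rightsquigarrow \beta_t + \chi(t)(\beta' - \beta_0)$ for any $1$--form $\beta'$ $C^\infty$--close to $\beta_0$; openness of the contact condition keeps this a contact structure, so the map $\xi \mapsto \xi S$ is \emph{open} on the connected component $C$ of the space of contact structures, and Gray's theorem says every element of $C$ is isotopic to $\xi$. Hence \emph{any} singular foliation sufficiently close to $\xi S$ is printed by $\xi$ on a surface isotopic (and $C^\infty$--close) to $S$, with no restriction on which perturbations are realizable. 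After this reduction, Peixoto's theorem supplies a nearby Morse--Smale foliation and one concludes via the convexity criterion (Proposition~\ref{prop:critère_convexité}).

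Two smaller points. First, your list of generic properties (non-degenerate singularities and closed leaves, no retrograde connections) omits the exclusion of non-trivial recurrence; your step~2 claim that ``every regular leaf runs from a positive $\alpha$--limit to a negative $\omega$--limit'' uses the Poincar\'e--Bendixson property (Definition~\ref{definition:pb}), which on surfaces of positive genus is not automatic and must be extracted from Morse--Smale genericity --- the paper makes this hypothesis explicit in Proposition~\ref{prop:critère_convexité}. Second, your construction of the dividing set (conformal rescaling $\Omega = e^h\omega$ with $\Div_\Omega Y = g + dh(Y)$, sign of $\int_L g$ as an invariant of non-degenerate leaves, $\Gamma$ placed in the bands between $S_+$ and $S_-$) is essentially the paper's own proof of Proposition~\ref{prop:critère_convexité}, except that your $S_+$ must also include the positive saddles together with their stable separatrices (as in the Giroux graph of Definition~\ref{def:giroux_graph}), not just neighborhoods of the repellers; otherwise $\Gamma$ cannot be made transverse to the foliation near those separatrices. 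So the second half of your argument is sound modulo these adjustments; the first half needs the openness-plus-Gray argument to be a proof.
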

\begin{figure}[htp]
	\begin{center}
		\includegraphics[height=8cm]{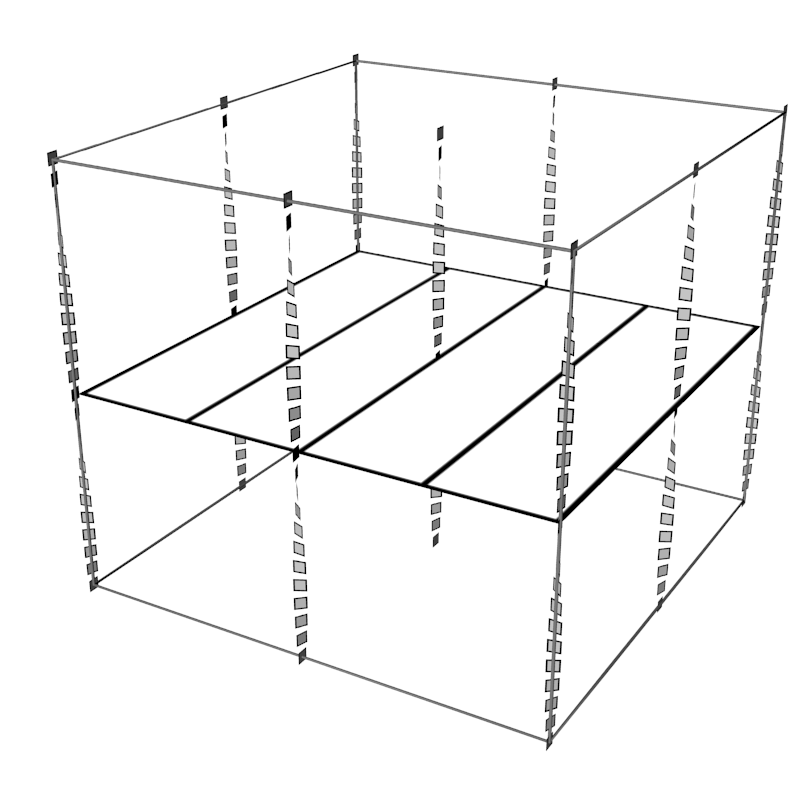}
	\end{center}
	\caption{A non-convex torus}
	\label{fig:genericity_avant}
\end{figure}
\begin{figure}[htp]
	\begin{center}
		\includegraphics[height=8cm]{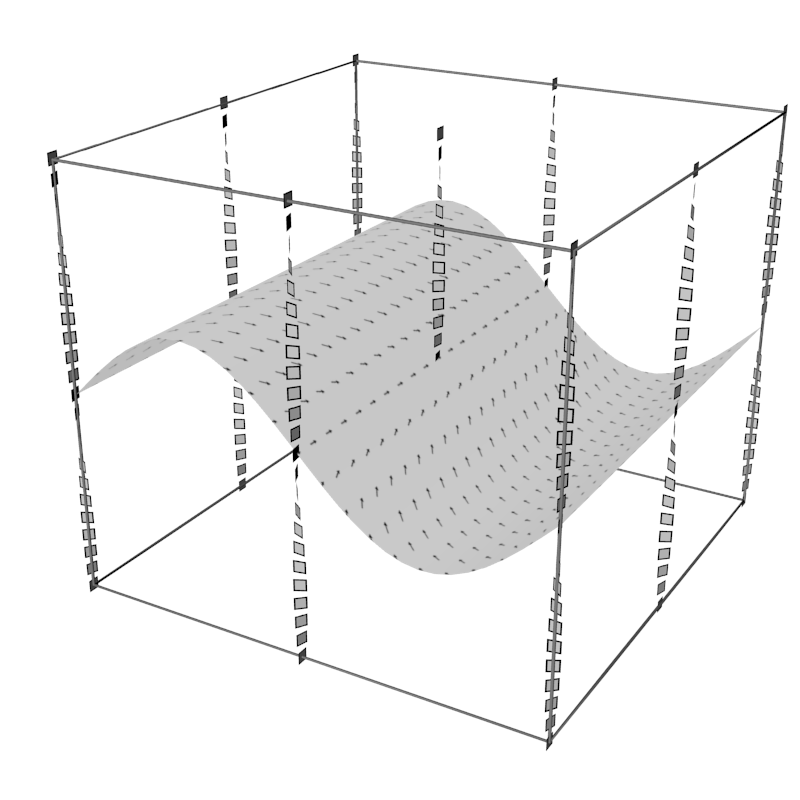}
	\end{center}
	\caption{Perturbation of the non-convex torus of Figure
	\ref{fig:genericity_avant} into a convex torus}
	\label{fig:genericity}
\end{figure}

Genericity of $\xi$--convex surfaces is a small dimensional phenomenon, it does
not hold for hypersurfaces in higher dimensions \cite{MoriGenericity}. In
dimension 3, $\xi$--convexity is a degenerate notion, much like ordinary
convexity in real dimension 1 and pseudo-convexity in complex dimension 1.

We first prove that any foliation sufficiently close to a characteristic
foliation $\xi_0 S$ is the characteristic foliation $\xi S$ coming from some
$\xi$ isotopic to $\xi_0$. Equivalently it means it is the characteristic
foliation printed by $\xi_0$ on some surface isotopic to $S$. Let $C$ be the
connected component of the space of contact structures which contains $\xi_0$.
The first point is that the map which maps $\xi$ in $C$ to the
characteristic foliation $\xi S$ is open. The second point is that Gray's
theorem imply that all $\xi$ in $C$ are isotopic to $\xi_0$.

So the genericity of $\xi$--convex surfaces will follow from the one of divided
foliations.
Essentially we will see that the obstructions to the existence of a dividing set
discussed above are the only ones provided that no non-trivial recurrence appear.
The precise requirement is expressed in the following definition.

\begin{definition}
\label{definition:pb}
A singular foliation on a closed surface satisfies the
\emph{Poincar\'e--Bendixson property} if the limit set of any half orbit is
either a singularity or a closed orbit or a union of singularities and 
orbits connecting them.
\end{definition}

The Poincar\'e-Bendixson theorem thus says that a singular foliation on a sphere
satisfies the Poincar\'e-Bendixson property as soon as its singularities are
isolated, see e.g. \cite{PalisDeMelo}.

\begin{proposition}
\label{prop:critère_convexité}
Let $S$ be a surface in a contact manifold $(V, \xi)$. If the characteristic
foliation $\xi S$ satisfies the Poincaré--Bendixson property then $S$ is
$\xi$--convex if and only if $\xi S$ has neither degenerate closed leaves nor
retrograde connections.
\end{proposition}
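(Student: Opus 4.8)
The forward implication has in fact already been proved above, and without invoking the Poincar\'e--Bendixson property: the discussion preceding the definition of degenerate leaves shows that a $\xi$--convex surface carries no degenerate closed leaf, and the discussion of retrograde connections shows it carries none of those either. So the whole content of the proposition is the converse, and that is where the work lies. By Proposition~\ref{prop:convex_is_divided}, $S$ is $\xi$--convex precisely when $\xi S$ is divided, so it suffices to manufacture a dividing multi-curve for $\xi S$ out of the dynamical hypotheses.

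I would phrase the target analytically. Producing a dividing set amounts to finding a function $u$ on $S$ having $0$ as a regular value, together with a vector field $Y$ directing $\xi S$ and an area form, satisfying the convex contact condition \eqref{eqn:cccY}, namely $u\,\Div_\omega Y - du(Y) > 0$; the zero set $\Gamma = \{u = 0\}$ is then the dividing curve, with $S_\pm = \{\pm u > 0\}$. Reading this inequality on the three relevant loci tells me exactly what to arrange. At a singularity $Y$ vanishes, so the sign of $u$ must agree with that of $\Div_\omega Y$: positive singularities go into $S_+$, negative ones into $S_-$. At a critical point of $u$ restricted to a closed leaf the same computation as in Remark~\ref{rem:orbit_sign} forces repelling leaves into $S_+$ and attracting leaves into $S_-$. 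Finally, on $\Gamma$ the inequality reads $du(Y) < 0$, i.e. the flow of $Y$ must cross $\Gamma$ transversally from $S_+$ to $S_-$ and never in the opposite direction.

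The strategy is thus to separate a repelling part of the dynamics from an attracting part and to slip $\Gamma$ between them. Here I would use the hypotheses as follows. Since characteristic singularities have nonzero divergence they are isolated and signed, and the Poincar\'e--Bendixson property (Definition~\ref{definition:pb}) constrains every limit set to be a singularity, a closed leaf, or a polycyclic graph of separatrices; the absence of degenerate closed leaves makes every closed leaf attracting or repelling. I would then collect into a ``positive part'' the positive singularities, the repelling closed leaves, and every regular leaf both of whose limit sets are positive, and dually define a ``negative part''. The key role of the remaining hypothesis is now visible: a leaf running from a negative singularity to a positive one is exactly a retrograde connection, so ruling these out guarantees that no leaf joins the negative part to the positive part, and that every leaf outside the two parts runs from the positive side to the negative side. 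This is precisely the one-directional crossing demanded on $\Gamma$.

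It then remains to realize this separation by an actual multi-curve. I would take $\Gamma$ to be the boundary of an isolating neighborhood $N_+$ of the positive part, chosen transverse to $\xi S$ with the flow exiting, set $S_+ = N_+$ and $S_- = S \setminus \overline{N_+}$, and let $u$ be a smooth function positive on $N_+$, negative outside, vanishing transversally on $\Gamma$; after rescaling $Y$ and modifying the area form near the positive and negative parts so that $\Div_\omega Y$ acquires the correct sign there, one verifies \eqref{eqn:cccY} pointwise. The main obstacle is this middle step, the passage from the qualitative attractor--repeller picture to a clean transverse separating curve: the permitted saddle connections (positive-to-positive, negative-to-negative, and positive-to-negative) can organize the separatrices into intricate polycyclic limit graphs, and it is exactly the no-retrograde hypothesis, excluding the one missing type of connection, that must be leveraged to carve out $N_+$ as a genuine isolating block whose exit boundary is a multi-curve disjoint from the negative part.
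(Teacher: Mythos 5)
Your reduction is the right one and, up to the decisive step, it is the paper's own argument: your ``positive part'' is exactly the Giroux graph $G_+$ of Definition~\ref{def:giroux_graph} (under the no-retrograde hypothesis, the regular leaves with positive limit sets at both ends are precisely the stable separatrices of the positive saddles, since a stable separatrix emanating from a negative singularity would itself be a retrograde leaf), and the sign constraints you extract from \eqref{eqn:cccY} reproduce the paper's preliminary discussion of obstructions. But the step you defer --- carving out $N_+$ as an isolating block whose exit boundary is a transverse multi-curve --- is where the entire proof lives, and you leave it as an acknowledged obstacle rather than an argument. The paper fills it concretely: after reducing to generic singularities (nodes, saddles, saddle-nodes), it builds $S_+'$ as small disks around the positive singularities together with narrow bands around the repelling closed leaves and the stable separatrices of positive saddles, and observes that when these disks and bands are small enough the boundary smooths to a curve transverse to $\xi S$ (Figure~\ref{fig:giroux_graph}).

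The second half of the gap is that transversality of $\partial(S_+' \cup S_-')$ does not yet produce a dividing set: one must control the complement, and this is where the Poincar\'e--Bendixson property is invoked a \emph{second} time, a use your proposal omits. The paper shows each component $A$ of $S \setminus (S_+' \cup S_-')$ is an annulus (nonempty boundary transverse to a nonsingular foliation forces $\chi(A) = 0$), contains no closed leaf, and hence --- by Poincar\'e--Bendixson, which also rules out polycyclic limit graphs inside $A$ since their singularities would have to lie in $A$ --- every leaf entering $A$ exits through the other boundary component; so $A$ is the model of Figure~\ref{fig:neighb_Gamma} and its \emph{core} is taken as the dividing curve, the regions $S_\pm$ then retracting onto $S_\pm'$. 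Your variant $\Gamma = \partial N_+$ can be made to work, but it needs the same through-flow fact to produce an area form contracted by $Y$ on all of $S_-$ including the transit regions. Finally, you do not address nonemptiness: the paper notes via Stokes' theorem that neither $S_+'$ nor $S_-'$ can be empty, since an area form on a closed surface is never exact.
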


Genericity of $\xi$--convex surfaces then follows from Peixoto's theorem
stating that Morse-Smale foliations are generic on surfaces, see
\cite{PalisDeMelo} for a beautiful exposition of this result starting with the
basic of dynamical systems. A foliation is Morse-Smale if 
\begin{itemize}
\item 
it satisfies the Poincar\'e-Bendixson property,
\item
 all its singularities are nodes or saddles,
\item
all its closed leaves are non-degenerate,
\item
it has no saddle connections.
\end{itemize}

\begin{proof}[Proof of Proposition \ref{prop:critère_convexité}]
In the preceding sections, we have seen that the absence of degenerate closed
leaves and retrograde connections is necessary for convexity.

We now prove that it is sufficient when the Poincar\'e-Bendixson property holds.
In this proof we assume that all singularities are nodes, saddles or
saddle-nodes. This is true for generic families of characteristic foliations
with any number of parameters and is all we need in these lectures. In order to
save some more words we will even pretend there are no saddle-nodes. The reader
can replace any occurrence of the word ``saddle'' by ``saddle or saddle-node''
to get the more general proof.

During the discussion of obstructions to convexity, we have seen that
singularities and closed leaves should be dispatched into $S_+$ or $S_-$
according to their signs.
Another constraint comes from separatrices of saddles: since we want the
characteristic foliation to go transversely out of $S_+$ along $\Gamma$, stable
separatrices of positive saddles and unstable separatrices of negative
saddles cannot meet $\Gamma$.

So we build a subsurface $S_+'$ of $S$ by putting a small disk around each
positive singularity and narrow bands around positive closed leaves and stable
separatrices of positive saddles. If all these elements are sufficiently small,
the boundary of $S_+'$ can be smoothed to a curve transverse to the
characteristic foliation, see Figure \ref{fig:giroux_graph}.
\begin{figure}[htp]	
	\begin{center}
		\includegraphics{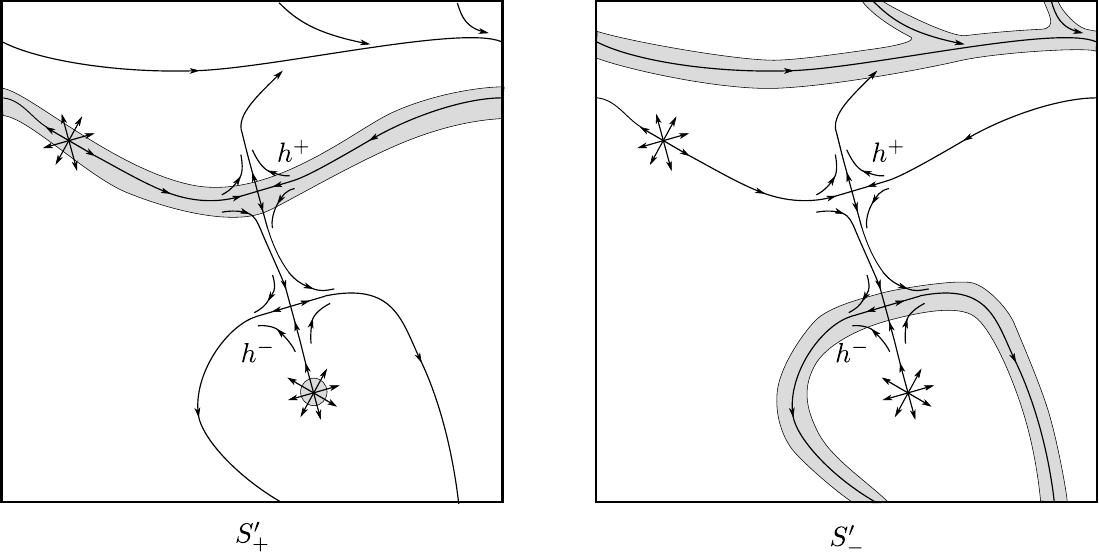}
	\end{center}
	\caption{Construction of a dividing set on a torus. One can check that
	$\partial S_+'$ and $\partial S_-'$ are indeed isotopic among dividing
	curves.}
	\label{fig:giroux_graph}
\end{figure}
In addition one can find an area form on $S_+'$ which
is expanded by $\xi S_+'$.  We can construct similarly a subsurface $S_-'$ and
a contracted area form on it. None of these subsurfaces is empty because of
Stokes' theorem which guaranties that an area form on a closed surface is never
exact.

Let $A$ be a component of the complement of $S_+' \cup S_-'$ in $S$. 
It has non-empty boundary and does not contain any singularity so $A$ is
an annulus. In addition it does not contain any closed leaf so
Poincar\'e-Bendixson's theorem guaranties that all leaves of the characteristic
foliation entering $A$ along some boundary component leave it through the other
boundary component. So we are indeed in the situation of Figure
\ref{fig:neighb_Gamma} and one can take the core of $A$ as a dividing curve.
The corresponding subsurfaces $S_\pm$ then retract onto $S_\pm'$.
\end{proof}

The proof above contains some useful information about how a dividing set can be
recovered from the important features of the characteristic foliation so we
record this in a definition and a corollary.

\begin{definition}
	\label{def:giroux_graph}
Given a foliation $\F$ satisfying the Poincar\'e-Bendixon property, we denote by
$G_+$ (resp $G_-$) the union of repelling (resp attracting) closed leaves, of
positive (resp negative) singularities and of the stable (resp unstable)
separatrices of these singularities. The union $G_+ \cup G_-$ is called the
Giroux graph of $\F$.
\end{definition}

\begin{indented}
Note that the terminology graph is a little stretched since one can have
separatrices accumulating on closed orbits (like in Figure
\ref{fig:giroux_graph}) or on connected singularities so the Giroux graph
equipped with the induced topology is not necessarily homeomorphic to a
CW-complex of dimension one.
\end{indented}

\begin{corollary}
	\label{cor:giroux_graph}
If a characteristic foliation satisfies the convexity criterion of Proposition
\ref{prop:critère_convexité} and $G_+ \cup G_-$ is its Giroux graph then, for
any dividing set, $S_+$ retracts on a regular neighborhood of $G_+$ and $S_-$ 
on a regular neighborhood of $G_-$.
\end{corollary}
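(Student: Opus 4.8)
The plan is to recognise $G_+$ as the ``repelling skeleton'' of $\xi S$ inside $S_+$ and to retract $S_+$ onto it by flowing backward along the foliation; the statement for $S_-$ and $G_-$ then follows by symmetry, reversing the coorientation of $\xi$ (which exchanges $S_+$ with $S_-$, repelling with attracting leaves, positive with negative singularities, and stable with unstable separatrices). Fix a dividing set $\Gamma$ together with a vector field $Y$ directing $\xi S$ for which $Y$ leaves $S_+$ transversally along $\Gamma$, as in Figure~\ref{fig:neighb_Gamma}. First I would observe that $\overline{S_+}$ is invariant under the backward flow of $Y$: along $\Gamma$ the backward flow points into $S_+$, so no backward orbit can escape through $\Gamma$, and since $\overline{S_+}$ is compact every backward orbit is defined for all time and remains in $\overline{S_+}$. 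Moreover the dividing condition provides an area form expanded by $Y$ on $S_+$, so the backward flow contracts area there.

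Next I would show that $G_+\subset S_+$ and that the $\alpha$--limit set of every point of $S_+$ is contained in $G_+$. Positive singularities lie in $S_+$, and repelling closed leaves lie in $S_+$ by Remark~\ref{rem:orbit_sign}. A stable separatrix of a positive saddle $q\in S_+$ cannot meet $\Gamma$, since a leaf crosses $\Gamma$ only from $S_+$ into $S_-$ and could then never return to $q$; hence it stays in $S_+$ and $G_+\subset S_+$. For the $\alpha$--limit statement, the Poincaré--Bendixson property shows that the $\alpha$--limit set of a point of $S_+$ is a singularity, a closed leaf, or a graphic, and it lies in $S_+$. A limiting singularity is a positive source or a positive saddle, a limiting closed leaf is repelling by Remark~\ref{rem:orbit_sign}, and the edges of a limiting graphic are separatrix connections between positive saddles, each of which is in particular a stable separatrix of the saddle it enters. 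By Definition~\ref{def:giroux_graph} all of these pieces lie in $G_+$. Area expansion also forbids any forward--attracting limit set in $S_+$, so that $G_+$ is exactly the set of points whose entire orbit stays in $S_+$, which is a clean way to see it is the right core to retract onto.

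Finally I would build the retraction. Since every $\alpha$--limit set in $S_+$ lies in $G_+$, I would choose a regular neighborhood $N$ of $G_+$ whose boundary in $S_+$ is transverse to $\xi S$ and which is trapping for the backward flow, exactly as the boundary of the subsurface $S_+'$ was smoothed in the proof of Proposition~\ref{prop:critère_convexité}. Every point of $S_+\setminus N$ then enters $N$ after a finite backward time, because otherwise its backward orbit would stay forever in the compact set $\overline{S_+}\setminus \mathrm{int}\,N$ and accumulate on an invariant set avoiding $G_+$, contradicting the previous paragraph. Reparametrising the backward flow up to this first entry time yields the desired deformation retraction of $S_+$ onto $N$. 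The main obstacle is this last step: the first--entry--time function must be made continuous, which is delicate precisely along the stable separatrices and along orbits whose forward end escapes through $\Gamma$, and one must accommodate the fact that $G_+$ need not be a CW--complex when separatrices accumulate on closed leaves, as noted after Definition~\ref{def:giroux_graph}. Both difficulties are handled by thickening $N$ to an isolating neighborhood with foliation--transverse boundary, so that the retraction is governed by the tame picture of Figure~\ref{fig:neighb_Gamma} near $\Gamma$ and by a genuine flow--box cover elsewhere.
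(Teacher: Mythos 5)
Your proof is correct, but it implements the retraction by a different mechanism than the paper. In the paper the corollary is read off from the proof of Proposition \ref{prop:critère_convexité}: there one builds the subsurface $S_+'$ (small disks around positive singularities, narrow bands around repelling closed leaves and stable separatrices of positive saddles), smooths its boundary to be transverse to the foliation, and uses the Poincaré--Bendixson property to show that each component of the complement of $S_+' \cup S_-'$ is an annulus whose leaves cross from one boundary component to the other; the retraction of $S_\pm$ onto $S_\pm'$ is then immediate from this product structure, and the quantifier ``for any dividing set'' rests on the earlier observation that the space of multi-curves dividing a fixed foliation is connected (indeed contractible), so every dividing set is isotopic to the one given by the cores of these annuli. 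You instead fix an arbitrary dividing set from the outset, prove backward invariance of $S_+$, classify the possible $\alpha$--limit sets (Poincaré--Bendixson, the sign constraints of Remark \ref{rem:orbit_sign}, absence of degenerate leaves and retrograde connections) to locate them all in $G_+$, and retract by flowing backward up to the first entry time into a trapping neighborhood $N$ with foliation-transverse boundary. The essential inputs coincide --- your $N$ is exactly the paper's $S_+'$, and Poincaré--Bendixson is the engine in both --- but your dynamical version treats an arbitrary dividing set directly, with no appeal to connectedness of the space of dividing sets, at the cost of the entry-time bookkeeping you flag; that final worry is in fact less delicate than you fear, since the stable separatrices all lie inside $N$, so outward transversality of the directing field along $\partial N$ already makes the first-entry time continuous. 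The annulus decomposition the paper extracts is slightly stronger information than the retraction alone, which is why the paper can dispatch the corollary in one sentence.
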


\subsection{Giroux criterion and Eliashberg--Bennequin inequalities}

Until now, the discussion of this chapter does not make any distinction between
tight and overtwisted contact structures. We now start to discuss how convex
surfaces theory sees tightness.

\begin{theorem}[{Giroux criterion \cite[Theorem 4.5a]{Giroux_2001}}]
\label{thm:critère_Giroux}
In a contact manifold $(V, \xi)$, a $\xi$--convex surface divided by some
multi-curve $\Gamma$ has a tight neighborhood if and only if one of the
following conditions is satisfied:
\begin{itemize}
	\item no component of $\Gamma$ bounds a disk in $S$
	\item $S$ is a sphere and $\Gamma$ is connected.
\end{itemize}
\end{theorem}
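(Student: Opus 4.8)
The statement is an equivalence whose two implications I would prove by rather different means, with the realization lemma (Lemma~\ref{lemma:realization}) as the common engine. Throughout I would use that near the dividing set the picture is the controlled normal form of Figure~\ref{fig:neighb_Gamma}, from which one extracts a framing computation: if $L$ is a Legendrian curve lying on a $\xi$--convex surface $S$ and bounding a disk $D \subset S$, then the contact framing of $L$ differs from the framing induced by $D$ exactly by the intersections with $\Gamma$, so that
\[
tb(L) = -\tfrac{1}{2}\,\#(L \cap \Gamma).
\]
I would also record, as a corollary of Lemma~\ref{lemma:realization}, the Legendrian realization principle: any multi-curve $C$ on $S$ that is \emph{non-isolating} (every component of $S \setminus (C \cup \Gamma)$ touches $\Gamma$) can, after an isotopy supported near $S$, be made a union of closed leaves and singularities of $\xi S$, hence Legendrian; one simply realizes a foliation divided by $\Gamma$ containing $C$ in its singular/closed-leaf locus.

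\textbf{Failure of the conditions implies overtwisted.} Suppose neither bullet holds. Then some component $\gamma$ of $\Gamma$ bounds a disk in $S$; choosing it innermost gives a disk $D$ with $D \cap \Gamma = \gamma$. Pushing $\gamma$ slightly to the outside of $D$ produces a curve $L$ isotopic to $\gamma$, disjoint from $\Gamma$, which is non-isolating, since the annulus between $L$ and $\gamma$ meets $\Gamma$ along $\gamma$ and the $D$--side disk contains $\gamma \subset \Gamma$. The realization principle makes $L$ Legendrian with $\#(L \cap \Gamma) = 0$, so $tb(L) = 0$, while $L$ bounds $D$ in $V$ and is an unknot. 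A Legendrian unknot with $tb = 0$ forces an overtwisted disk: normalizing, via Lemma~\ref{lemma:realization}, the convex disk it bounds to the pattern of Figure~\ref{fig:disque_vrille}. The sphere with connected $\Gamma$ escapes this argument, and it is worth spelling out why: there $\Gamma$ is a single circle cutting $\S^2$ into the two disks $S_\pm$, so every pushoff $L$ of $\Gamma$ bounds a disk on one side whose interior misses $\Gamma$; thus $L$ is \emph{isolating}, the realization principle does not apply, and in fact every Legendrian representative must cross $\Gamma$, forcing $tb(L) \le -1$.

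\textbf{The conditions imply tight.} This is the substantial direction, and I would prove it by showing that under either hypothesis the homogeneous neighborhood $S \times \R$ is contactomorphic to a \emph{standard model} already known to be tight. When no component of $\Gamma$ bounds a disk, I would cut $S$ along the Giroux graph (Definition~\ref{def:giroux_graph}) and use Corollary~\ref{cor:giroux_graph} to split $S \times \R$ into elementary pieces—$\R$--invariant neighborhoods over annuli carrying the model of Figure~\ref{fig:neighb_Gamma}, together with neighborhoods of the essential curves of $\Gamma$—then reassemble them as a piece of a unit cotangent bundle $ST^*\Sigma$, whose canonical contact structure is tight. For the sphere with connected $\Gamma$ I would instead identify $S \times \R$ directly with a neighborhood of the equatorial convex sphere inside the standard $\S^3$, whose single dividing circle it matches, so that tightness follows from Theorem~\ref{thm:bennequin}. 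In both cases Lemma~\ref{lemma:local_reconstruction} reduces the construction of the contactomorphism to matching the dividing data.

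The main obstacle is exactly this last direction: certifying that \emph{no} surface in a neighborhood of $S$—not merely $S$ itself—bounds an overtwisted disk. The constructive direction only has to exhibit one bad disk, whereas tightness is a statement about all of them, and the warning recorded after the overtwisted exercise, that failing to see something does not mean it is absent, applies with full force here. An honest completion therefore rests on the tightness of the few standard models, i.e. on the Bennequin-type content of Theorem~\ref{thm:bennequin} and its generalizations; logically the criterion is best read as reducing tightness near an arbitrary convex surface to tightness of these universal models, and it is that reduction, rather than any single computation, that carries the weight of the argument.
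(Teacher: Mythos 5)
Your proposal attempts the full equivalence, whereas the paper proves only the necessity statement for spheres (tight neighborhood implies $\Gamma$ connected), by directly constructing a foliation divided by $\Gamma$ containing a circle of singularities with radial interior and invoking the realization lemma. In the sphere case your route through Legendrian realization and $tb(L) = -\frac{1}{2}\#(L \cap \Gamma)$ is a correct Honda--Kanda-style variant and is ultimately the same mechanism: your non-isolating hypothesis plays exactly the role of the paper's observation that $S''$ has extra boundary components, and your final overtwisted disk is the same pattern, realized on the subdisk rather than on all of $S$.

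The genuine gap is in your verification that the pushoff $L$ is non-isolating: you check the disk side and the annulus between $L$ and $\gamma$, but not the component of $S \setminus (L \cup \Gamma)$ on the \emph{far} side of $L$. On a sphere with $\Gamma$ disconnected this is repairable (if that component had boundary $L$ alone it would be a disk, forcing $\Gamma = \gamma$, contradiction), but when $S$ has positive genus and $\Gamma$ is a \emph{single} contractible curve --- a case in which neither bullet of the theorem holds, so your argument is obliged to cover it --- the far-side component is a positive-genus surface whose boundary is $L$ alone, disjoint from $\Gamma$; thus $L$ \emph{is} isolating and the realization principle does not apply. This is not a fixable bookkeeping slip: no foliation divided by $\Gamma = \gamma$ can contain a singular circle with radial interior and untouched exterior, because on the exterior region the dual field $Y$ would have divergence of a single sign while $\beta = \iota_Y\Omega$ vanishes along the singular circle, so Stokes gives $0 = \int_L \beta = \int (\Div_\Omega Y)\,\Omega \neq 0$ (the same Stokes obstruction the paper uses in the proof of Proposition~\ref{prop:critère_convexité}). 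Giroux's treatment of this case requires an additional non-perturbative move --- e.g.\ creating a pair of dividing curves by folding along a closed leaf realized around a handle --- before an argument like yours can run. Two secondary points: your step ``a $tb = 0$ Legendrian unknot yields an overtwisted disk'' quietly uses realization relative to a Legendrian boundary, which Lemma~\ref{lemma:realization} as stated (for closed surfaces) does not provide; and your tightness direction is a strategy sketch rather than a proof, though there you are consistent with the paper, which also omits it --- and your reduction of the sphere case to Theorem~\ref{thm:bennequin_giroux} via a homogeneous neighborhood, in which all $S_t$ are convex with connected dividing set, is indeed how the paper's later material supplies that direction without circularity.
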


The only application of this theorem we will present in detail is in the
classification of tight contact structures on $\S^3$ (existence by Bennequin
and uniqueness by Eliashberg). There we will only need
that, if $S$ is a sphere, then it has a tight neighborhood only if its dividing
set is connected. So we prove only this part of the theorem, we assume $S$ is
a sphere and $\Gamma$ is not connected.
Let $S'$ be a component of $S \setminus \Gamma$ which is a disk and denote by
$\gamma$ its boundary.  Let $S''$ be the other component containing $\gamma$ in
its boundary.  Since $\Gamma$ is not connected, $S''$ has more boundary
components. Using this, one can construct a foliation $\F$ on $S$ which is
divided by $\Gamma$, has a circle of singularities $L$ in $S''$, is radial
inside a disk bounded by $L$ and coincides with $\xi S$
outside $S' \cup S''$, see Figure \ref{fig:giroux_criterion}. 
\begin{figure}[htp]
	\begin{center}
		\includegraphics{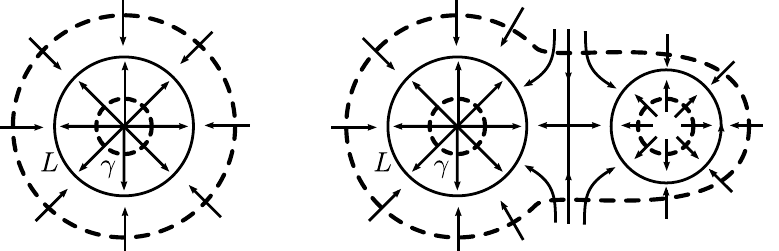}
	\end{center}
	\caption{Characteristic foliations for the Giroux criterion. The dividing set
	$\Gamma$ is dashed. On the left-hand side one has the simplest case when $S''$
	is an annulus. On the right hand-side one sees a possible foliation when $S''$
	has one more boundary component (on the right). Note that the disk bounded by
	the small component of $\Gamma$ on the right may contain more components of
	$\Gamma$. The extension to more boundary components uses the same idea.}
	\label{fig:giroux_criterion}
\end{figure}
In any neighborhood $U$ of $S$,
the realization Lemma gives a surface $\delta_1(S)$ which has $\delta_1(\F)$ as
its characteristic foliation. Then $\delta_1(L)$ is the boundary of an
overtwisted disk contained in $\delta_1(S)$ hence in $U$.

An important direct application of the Giroux criterion is Giroux's proof of the
following constraint on the Euler class of a tight contact structure
(originally due to Eliashberg). We will not use it in those notes but include
it here since it now comes for free.

\begin{theorem}[Eliashberg--Bennequin inequality \cite{Eliashberg_20_ans}]
\label{thm:eliashberg_bennequin}
Let $(M, \xi)$ be a 3--dimensional contact manifold. If $\xi$ is tight and $S$ 
is a closed surface embedded in $M$ then the Euler class of $\xi$ satisfies the
following inequality:
\[
|\langle e(\xi), S \rangle| \leq \max(0, -\chi(S))
\]
\end{theorem}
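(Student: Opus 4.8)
The goal is to bound the Euler number $\langle e(\xi), S\rangle$ of a tight contact structure evaluated on a closed embedded surface $S$. The plan is to first reduce to the case where $S$ is $\xi$--convex, then express the Euler number in terms of the dividing set, and finally invoke the Giroux criterion (Theorem~\ref{thm:critère_Giroux}) to exploit tightness.

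First I would perturb $S$ to be $\xi$--convex, which is possible by the genericity proposition, and let $\Gamma$ be a dividing multi-curve decomposing $S$ into $S_+$ and $S_-$. Evaluating the Euler class requires a section of $\xi_{|S}$ with controlled zeros; the natural choice is a vector field directing the characteristic foliation $\xi S$, whose singularities are exactly the tangency points of $S$ with $\xi$. By the Poincaré--Hopf type argument one gets $\langle e(\xi), S\rangle = \chi(S_+) - \chi(S_-)$. This is the key bookkeeping identity: positive singularities (sources, saddles in $S_+$) contribute with one sign and negative ones with the other, and grouping them by whether they lie in $S_+$ or $S_-$ turns the count of signed zeros into the difference of Euler characteristics of the two halves.

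Next I would express everything through $\Gamma$. Writing $\chi(S_\pm)$ in terms of the genus and number of boundary components of each half, and using $\chi(S) = \chi(S_+) + \chi(S_-)$ together with $\partial S_+ = \partial S_- = \Gamma$, one reduces the inequality $|\chi(S_+) - \chi(S_-)| \le \max(0, -\chi(S))$ to a statement about the topology of the pieces cut out by $\Gamma$. The worst case is when one of $S_\pm$ is a disjoint union of disks, which maximizes $|\chi(S_+) - \chi(S_-)|$. Here is precisely where tightness enters: by the Giroux criterion, in a tight contact manifold no component of $\Gamma$ can bound a disk in $S$ (unless $S$ is a sphere with connected $\Gamma$, which I would treat as a separate easy case giving $\chi(S_+)=\chi(S_-)=1$, so the pairing is $0 \le \max(0,-2)=0$). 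Ruling out disk components forbids the extremal configurations and forces the Euler characteristics of the two halves to stay balanced.

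The main obstacle I expect is the genus-free combinatorial estimate: showing that once no component of $\Gamma$ bounds a disk, the difference $|\chi(S_+) - \chi(S_-)|$ is controlled by $-\chi(S)$. The clean way is to note that any component of $S_+$ or $S_-$ that is not a disk has nonpositive Euler characteristic, so the positive contributions to $\chi(S_+)$ come only from disk components, which the Giroux criterion excludes. Thus each of $\chi(S_+)$ and $\chi(S_-)$ is at most some quantity bounded by the number of boundary curves, and one bounds $|\chi(S_+)-\chi(S_-)|$ against $|\chi(S)| = -\chi(S)$ when $\chi(S)\le 0$. Handling the sphere case and the orientation conventions carefully (making sure the sign of $\langle e(\xi),S\rangle$ matches $\chi(S_+)-\chi(S_-)$ rather than its negative) is the part where I would be most careful, but the conceptual engine is entirely the identity $\langle e(\xi),S\rangle = \chi(S_+)-\chi(S_-)$ combined with the exclusion of disk-bounding dividing curves.
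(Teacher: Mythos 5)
Your proposal is correct and follows essentially the same route as the paper's proof: perturb $S$ to a $\xi$--convex surface, use the identity $\langle e(\xi), S\rangle = \chi(S_+) - \chi(S_-)$ obtained from a section of $\xi|_S$ directing the characteristic foliation, treat the sphere case separately via connectedness of $\Gamma$, and for positive genus invoke the Giroux criterion to exclude disk components of $S_\pm$. The final estimate you describe somewhat vaguely (``bounded by the number of boundary curves'') is in fact immediate: once no component of $S_\pm$ is a disk one has $\chi(S_\pm) \le 0$, so $|\chi(S_+)-\chi(S_-)| \le -\chi(S_+)-\chi(S_-) = -\chi(S)$, exactly as in the paper.
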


\begin{proof}
Using genericity of $\xi$--convex surfaces, one can homotop $S$ until it is
$\xi$--convex. This does not change the Euler class which can now be evaluated
as $\chi(S_+) - \chi(S_-)$ since singularities are distributed among $S_+$ and
$S_-$ according to their signs. If $S$ is a sphere then the Giroux criterion
says that both $S_+$ and $S_-$ are disks so $\langle e(\xi), S \rangle = 0$ and
the inequality is proved. So suppose now that $S$ has positive genus.  The
Giroux criterion says that no connected component of $S_+$ or $S_-$ is a disk.
This implies that both $\chi(S_+)$ and $\chi(S_-)$ are negative.  Hence both
$\chi(S_+) - \chi(S_-)$ and $-\chi(S_+) + \chi(S_-)$ are less than $-\chi(S_+) -
\chi(S_-)$ which is $-\chi(S)$. 
\end{proof}

\chapter{Bifurcations and first classification results}

The goal of this chapter is to prove that any tight contact structure on $\S^3$
has to be isotopic to the standard contact structure and that the later is
indeed tight. We will not give the original proofs due to Eliashberg
\cite{Eliashberg_20_ans} and Bennequin \cite{Bennequin} respectively. We will
rather use the technology of $\xi$--convex surfaces to prove them. These proofs
were obtained by Giroux along its way towards more general classification
results in \cite{Giroux_2000}. The classification result is a comparatively easy
special case of Giroux's preparation Lemma \cite[Lemma 2.17]{Giroux_2000} while
the tightness result follows from the bifurcation lemmas
\cite[Lemmas 2.12 and 2.14]{Giroux_2000}.

\section{The elimination lemma}

In the characteristic foliation of a surface, a saddle and a node 
are said to be in elimination position if they have the same sign and there is a
leaf from one to the other. Such a leaf is called an elimination arc.
Giroux's elimination lemma in its simplest form says one can perturb the surface
to replace a neighborhood of the elimination arc by a region without singularity
as in Figure \ref{fig:elimination}.
\begin{figure}[htp]
  \begin{center}
		\includegraphics{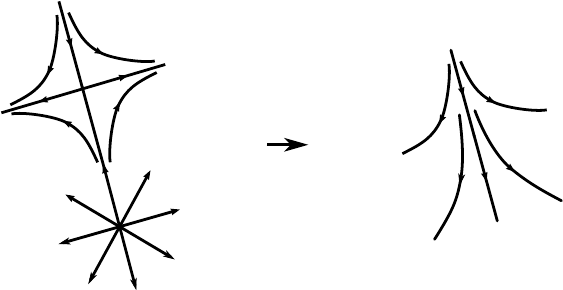}
  \end{center}
  \caption{Elimination of a pair of singular points.}
  \label{fig:elimination}
\end{figure}
\begin{figure}[htp]
	\begin{center}
		\includegraphics{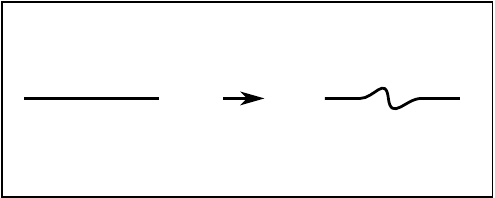}

		\includegraphics[width=6cm]{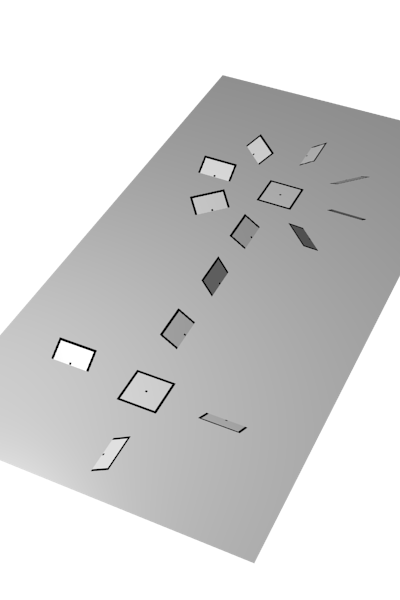}
		\includegraphics[width=6cm]{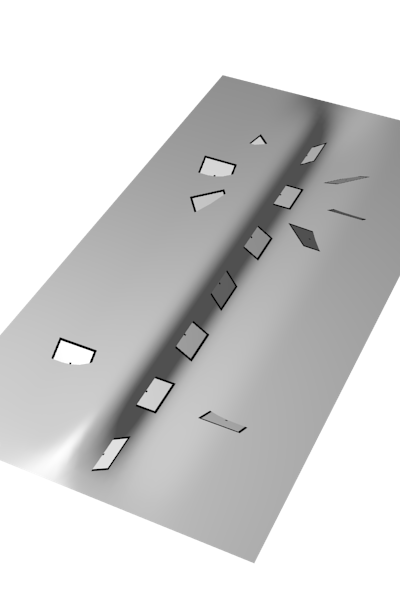}
	\end{center}
	\caption{The elimination move. The top box shows the move transverse to the
	elimination arc seen as the middle point of the segment. This move is cut off in the longitudinal direction.}
	\label{fig:elimination_arc}
\end{figure}

For the classification of tight contact structures on $\S^3$ we will need a
version of this process which keeps neighboring surfaces under control. 
\begin{indented}
We do not need much control though and the following version is simpler than
\cite[Lemma 2.15]{Giroux_2000} which is needed for the classification of tight
contact structures on torus bundles.
\end{indented}
Let $\xi$ be a contact structure on $S \times [-1, 1]$ and set
$S_t := S \times \{t\}$.
Suppose a node $e_0$ and a saddle $h_0$ are in elimination position 
on $S_0$. This configuration is stable so it persists for $t$ in some interval
$(-\varepsilon, \varepsilon)$. Let $C_t$ denote a continuous family of
elimination arcs between $e_t$ and $h_t$ on $S_t$.

\begin{lemma}[Giroux elimination lemma]
\label{lem:elimination}
Let $\delta$ be a positive number smaller than $\varepsilon$. Let $U$ a
neighborhood of $\bigcup_{|t| < \delta} C_t$ intersecting each $S_t$ in a disk
$D_t$ whose characteristic foliation is as in the left hand side of Figure
\ref{fig:elimination}.  One can deform $\xi$ in $U$ such that $\xi D_t$ has:
\begin{itemize}
\item 
no singular point when $|t| < \delta$,
\item
a saddle-node when $|t| = \delta$,
\item
a pair of singularities in elimination position when 
$|t| \in (\delta, \varepsilon)$.
\end{itemize}
In addition, one can impose that separatrices facing the elimination arc are
connected to the same points of $\partial D_t$ as before the deformation, see
Figure \ref{fig:elimination_family}.
\end{lemma}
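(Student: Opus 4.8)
The plan is to deduce this from the mechanism behind the realization lemma (Lemma~\ref{lemma:realization}), exploiting that the two singularities being cancelled have the \emph{same} sign. Using the flow of $\partial_t$ I first write $\xi = \ker(u_t\,dt + \beta_t)$ near $\bigcup_{|t|<\varepsilon}C_t$, so that $\xi S_t = [\beta_t]$ and the contact condition is \eqref{eqn:ccg}. The surfaces $S_t$ are not assumed $\xi$--convex, but this is harmless locally: since $e_0,h_0$ are in elimination position they share a sign, say positive, and the disk $D_t$ carries the standard foliation of the left of Figure~\ref{fig:elimination}, whose only singularities are these two positive ones. Exactly as in the construction of $S_+'$ and of an expanded area form in the proof of Proposition~\ref{prop:critère_convexité}, $D_t$ then admits an area form making $\xi D_t$ expanding and transverse-outward along $\partial D_t$. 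Thus near $C_t$ we sit in the $S_+$ regime: after a local reconstruction (Lemma~\ref{lemma:local_reconstruction}) the germ is $\ker(+dt+\beta'_t)$ with $\beta'_t=\beta_t/|u_t|$, and the contact condition reduces to $d\beta'_t>0$, which is convex and independent of $u$. Realizing a prescribed $\xi D_t$ is therefore the same problem as realizing an expanding foliation with prescribed behaviour along $\partial D_t$, and the Gray-plus-convexity engine of Lemma~\ref{lemma:realization} applies verbatim, now in a family parametrised by $t$.

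Next I would build the target family $\F_t$ on a fixed model disk as a standard saddle--node unfolding, taking the directing vector field $(x^2-a(t))\,\partial_x + y\,\partial_y$, where $a(t)$ is an even bump with $a<0$ for $|t|<\delta$, with $a(\delta)=0$, and with $a>0$ for $|t|\in(\delta,\varepsilon)$. This gives no singularity for $|t|<\delta$, a saddle--node at $|t|=\delta$, and a saddle at $x=-\sqrt{a}$ together with a node at $x=+\sqrt a$ joined along the $x$--axis for $|t|\in(\delta,\varepsilon)$, i.e.\ a pair in elimination position. Its divergence $2x+1$ is positive near the origin, so every member is expanding with the correct positive sign, including the degenerate limit at $|t|=\delta$. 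I would keep the whole modification supported in an interior transverse slice, cut off in the longitudinal direction, and taper $\F_t$ to $\xi D_t$ for $|t|$ near $\varepsilon$; this guarantees that each $\F_t$ agrees with $\xi D_t$ near $\partial D_t$ and, in particular, that the separatrices facing $C_t$ reach the same points of $\partial D_t$ as before.

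I would then transport. Since $[\beta'_t]$ and $\F_t$ are both expanding and coincide near $\partial D_t$, and $d\beta'>0$ is convex, the linear interpolation of the corresponding forms connects them through expanding foliations fixing the boundary, simultaneously for all $t$. Feeding this path into Gray's theorem (Theorem~\ref{thm:gray}) produces an isotopy $\delta_s$ whose generating vector field is tangent to the compact disks $D_t$, so its flow exists for all $s$, just as in the footnote to Lemma~\ref{lemma:global_reconstruction}. Finally I would cut the isotopy off using Remark~\ref{rem:cutoff} so that it is supported in $U$ and fixes everything outside, and set $\xi' = \delta_1^*\xi$; by construction $\xi' D_t = \F_t$ has the stated singularities, which is the assertion.

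The main obstacle is the model family, not the transport. One must arrange at once that (i) the saddle--node bifurcation occurs exactly at $|t|=\delta$, (ii) every member stays expanding --- here the divergence bookkeeping at the degenerate saddle--node, and the Poincar\'e--Hopf check that an index-zero nonsingular foliation is compatible with the fixed outward boundary pattern, are the delicate points --- and (iii) the facing separatrices keep their endpoints on $\partial D_t$ uniformly in $t$. Once the model is in hand, the contact-topological content is entirely the convexity of $d\beta'>0$ and Gray's theorem, both already established.
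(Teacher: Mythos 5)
Your overall strategy --- normalize to $u>0$ using the common positive sign of the pair, build an explicit saddle--node unfolding with positive divergence, and transport via convexity plus Gray --- is a reasonable realization-type route, and your model family $(x^2-a(t))\partial_x + y\partial_y$ is correct (divergence $2x+1>0$, saddle and source in elimination position for $a>0$, saddle--node at $a=0$). But there is a genuine gap exactly at the step you declare easy, the transport. You claim that after normalizing $u\equiv 1$ ``the contact condition reduces to $d\beta'_t>0$, which is convex and independent of $u$,'' and that Lemma~\ref{lemma:realization} applies ``verbatim, now in a family parametrised by $t$.'' This is false for $t$-dependent families: with $u\equiv 1$ the contact condition \eqref{eqn:ccg} reads $d\beta_t - \beta_t\wedge\dot\beta_t>0$, and the coupling term $\beta_t\wedge\dot\beta_t$ is quadratic in the family, so the solution set is not convex and slice-wise expansion $d\beta^s_t>0$ at every $(s,t)$ does \emph{not} make $\ker(dt+\beta^s_t)$ a contact structure. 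The realization lemma is proved for a homogeneous ($\R$-invariant) neighborhood, where $\dot\beta\equiv 0$; here the neighborhood is necessarily \emph{not} homogeneous, since the whole point of the lemma is that $\xi S_t$ changes with $t$ (singularities die and are reborn), so the lemma cannot be invoked as is. Note also that your appeal to Lemma~\ref{lemma:local_reconstruction} to get the germ $\ker(dt+\beta')$ is a misquotation: that lemma compares two contact structures inducing the same foliation; what you actually need is merely a choice of transversal vector field positively transverse to $\xi$ near the arcs (possible because both singularities are positive), which is legitimate but a different statement. The gap is repairable: \eqref{eqn:ccg} is \emph{affine} in $u$ for fixed $\beta_t$, so one can first interpolate $u$ to a large constant $N$, after which $N\,d\beta^s_t$ dominates the bounded error term $\beta^s_t\wedge\dot\beta^s_t$ for the whole compact interpolation; but as written your central step would fail, and it is precisely where the family-version difficulty of the lemma lives (compare the crossing lemma, where the term $\dot\beta(Y)$ is what constrains bifurcations).

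For comparison, the paper proves the lemma by a different and more elementary mechanism: an explicit deformation of the surfaces themselves in the direction transverse to the Legendrian elimination arc, tilting each transverse slice so that its two tangencies with the rotating contact planes (which have the same sign) cancel, cut off in the longitudinal direction and in $t$; see Figures \ref{fig:elimination_arc} and \ref{fig:transverse_deformation_family}. That approach trades your global interpolation machinery for a local model along the arc and avoids the $\beta_t\wedge\dot\beta_t$ issue entirely, since it never needs a path of contact structures --- only an isotopy of surfaces in the fixed $\xi$. If you patch the transport step as indicated, your argument would give an alternative proof, but the burden of the proof sits in that step, not, as you suggest, in the construction of the model family.
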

\begin{figure}[htp]
  \begin{center}
		\includegraphics[width=10cm]{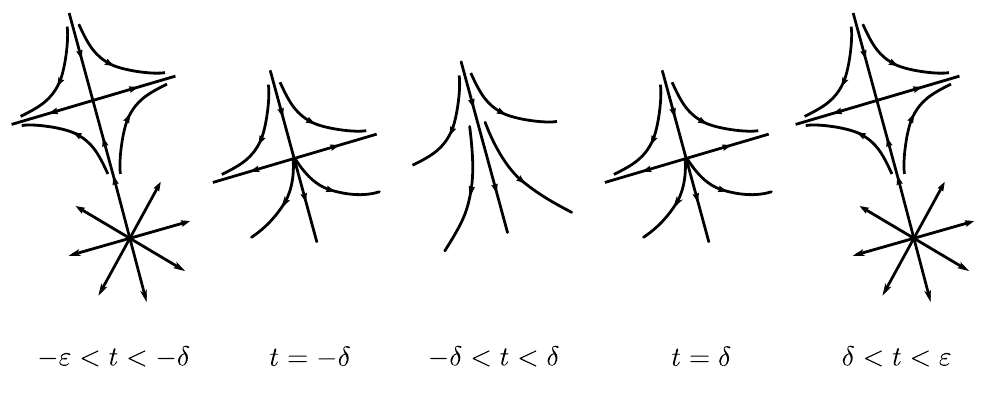}
  \end{center}
  \caption{Elimination in a family.}
  \label{fig:elimination_family}
\end{figure}
The corresponding manipulation transverse to the elimination arc is explained in
Figure \ref{fig:transverse_deformation_family}
\begin{figure}[htp]
	\begin{center}
		\includegraphics{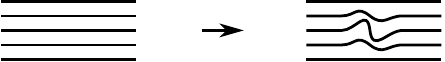}
	\end{center}
	\caption{The elimination move in family. The left hand-side shows the original
	surfaces $S_t$ stacked. The right hand-side performs the elimination, compare
	with top of Figure \ref{fig:elimination_arc}.
	}
	\label{fig:transverse_deformation_family}
\end{figure}

\section{Thickened spheres and Eliashberg uniqueness}

The goal of this section is to explain Giroux's proof of the classification of
tight contact structures on $\S^3$.

\begin{theorem}[Eliashberg \cite{Eliashberg_20_ans}]
\label{thm:eliashberg_s3}
Any tight contact structure on $\S^3$ is isotopic to the standard one.
\end{theorem}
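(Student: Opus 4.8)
The plan is to reduce the global statement to a normal form along a single family of convex spheres, exploiting that tightness forces every convex sphere to have a \emph{connected} dividing set. Fix once and for all a reference copy of the standard contact structure $\xi_{\mathrm{std}}$ on $\S^3$ and recall (Figure~\ref{fig:xiS_spheres}) that the round spheres are $\xi_{\mathrm{std}}$--convex with a single dividing circle and carry the simplest possible foliation: one positive source, one negative sink, and all regular leaves running from one to the other.

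First I would remove two points $p_\pm$ from $\S^3$, identify the complement with a thickened sphere $\S^2 \times \R$, and choose product spheres $S_t = \S^2 \times \{t\}$ so that for $|t|$ large $S_t$ is a tiny Darboux sphere around $p_\pm$. By the Darboux--Pfaff theorem (Theorem~\ref{thm:Darboux}) such a small sphere already carries the standard source/sink foliation, so the germ of $\xi$ near each $p_\pm$ agrees with the germ of $\xi_{\mathrm{std}}$ up to a contactomorphism. It then remains only to standardize $\xi$ on the thickened region lying between two such small spheres.

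Next I would make the whole family $(S_t)$ generic, so that each $S_t$ is $\xi$--convex except at finitely many bifurcation times. Because $\xi$ is tight and each $S_t$ is a sphere, the Giroux criterion (Theorem~\ref{thm:critère_Giroux}) forces its dividing set to be a single circle; were it ever disconnected, the realization Lemma~\ref{lemma:realization} would produce an overtwisted disk, a contradiction. Using the realization lemma again I would normalize $\xi S_t$, at every regular time, to the source/sink model divided by that circle. Between consecutive bifurcation times the dividing data is constant, so the global reconstruction Lemma~\ref{lemma:global_reconstruction} together with Gray's Theorem~\ref{thm:gray} shows that $\xi$ is isotopic to the $\R$--invariant model on each such slab.

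The hard part is crossing the bifurcation times, and this is precisely where the elimination Lemma~\ref{lem:elimination} enters rather than the heavier bifurcation machinery. As $t$ increases, the only essential events are the birth or death of a node--saddle pair in elimination position, and the lemma lets me cancel each such pair in a controlled neighborhood without disturbing the boundary behavior of the separatrices and without ever disconnecting the dividing circle. After eliminating all superfluous singularities I am left with a single $\R$--invariant tight germ on the whole thickened sphere, matching $\xi_{\mathrm{std}}$. Finally, all the leafwise normalizations above must be promoted to \emph{ambient} isotopies of $\S^3$; here Libermann's Theorem~\ref{thm:lib} and the cut-off of Remark~\ref{rem:cutoff} realize each adjustment by a compactly supported contact Hamiltonian, and splicing the Darboux germs at $p_\pm$ to the standardized thickened sphere yields the desired isotopy from $\xi$ to $\xi_{\mathrm{std}}$. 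The most delicate point---the one that genuinely uses tightness rather than soft genericity---is ensuring the dividing circle stays connected through every bifurcation, since a single disk-bounding component would immediately violate the hypothesis.
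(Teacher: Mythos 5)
Your overall skeleton is the right one---decompose $\S^3$ into two standard balls and a thickened sphere, make the family $(S_t)$ generic, use tightness via the Giroux criterion (Theorem~\ref{thm:critère_Giroux}) to force connected dividing circles, and call on the elimination Lemma~\ref{lem:elimination} at the bad times---but there is a genuine gap at the heart of your bifurcation analysis. You claim that ``the only essential events are the birth or death of a node--saddle pair in elimination position'' and propose to cancel each such pair. This misidentifies the obstruction: a saddle-node moment is \emph{not} a failure of convexity at all (the convexity criterion of Proposition~\ref{prop:critère_convexité} tolerates saddle-nodes, and its proof explicitly covers them). In a generic family on spheres in a tight manifold, degenerate closed leaves are excluded because any closed leaf on a sphere bounds an overtwisted disk; what remains, and what your proposal never confronts, are the \emph{retrograde saddle connections}. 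These are the finitely many times $t_i$ where $S_{t_i}$ is genuinely non-convex, and you cannot cross them by cancelling a freshly born pair. The paper's Proposition~\ref{prop:thickened_sphere} instead eliminates \emph{all positive saddles} on an entire interval around $t_i$, so that no retrograde connection can occur there; the saddle-nodes appear at $|t|=\delta$ only as the controlled boundary of the parametric elimination, not as the event being removed. Moreover, the existence of suitable elimination arcs pairing each positive saddle with a node \emph{while avoiding the separatrix entering the connection} is exactly where tightness does quantitative work: the Giroux criterion together with Corollary~\ref{cor:giroux_graph} forces $G_+$ to be a tree, so the saddles can be matched with all but one node. Without this tree structure your elimination step has no reason to be possible, and without the parametric form of Lemma~\ref{lem:elimination} an elimination at a single time can create new pathologies in neighboring slices.

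A secondary but real weakness is your endgame. Slab-by-slab application of the global reconstruction Lemma~\ref{lemma:global_reconstruction} plus splicing by contact Hamiltonians (Theorem~\ref{thm:lib}, Remark~\ref{rem:cutoff}) does not explain how the isotopies on adjacent slabs match across the former bifurcation times, where the characteristic foliations jump. The paper avoids this entirely: after Proposition~\ref{prop:thickened_sphere} makes \emph{every} $S_t$ convex rel boundary, a single application of Giroux's uniqueness lemma---requiring only a continuous family of dividing circles, which tightness supplies---identifies $\xi$ with the standard model on the whole thickened sphere at once. That lemma is precisely the parametric realization/reconstruction statement your splicing argument implicitly needs but does not prove.
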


By definition of contact structures, one can assume that $\S^3$ is the union
of two standard balls and a thickened sphere with standard $\xi$--convex
boundary as in Figure \ref{fig:xiS_spheres}.
This allows in particular to apply the following proposition.

\begin{proposition}
	\label{prop:thickened_sphere}
Let $\xi$ be a tight contact structure on a thickened sphere 
$S \times [0, 1]$. If $S_0$ and $S_1$ are $\xi$--convex then $\xi$ is
isotopic relative to the boundary to a contact structure $\xi'$ such that all
spheres $S_t$ are $\xi'$--convex.
\end{proposition}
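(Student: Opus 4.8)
The plan is to study the whole family $(\xi S_t)_{t\in[0,1]}$ of characteristic foliations as a one-parameter family of singular foliations on the sphere $S$, and to remove, one at a time, the finitely many values of $t$ at which $S_t$ fails to be $\xi$--convex. First I would isotope the family $(S_t)$, fixing a collar of the boundary so that the already-convex slices $S_0$ and $S_1$ are untouched, until the arc $t\mapsto \xi S_t$ is generic. Since every foliation on $S\cong\S^2$ with isolated singularities satisfies the Poincaré--Bendixson property, Proposition \ref{prop:critère_convexité} applies slicewise: $S_t$ is $\xi$--convex precisely when $\xi S_t$ has neither a degenerate closed leaf nor a retrograde connection. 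In a generic arc the bad set is finite, and at each bad time $t_i$ the only phenomenon violating the criterion is a degenerate closed leaf or a retrograde saddle connection; births and deaths of node--saddle pairs pass through saddle--node singularities, which Proposition \ref{prop:critère_convexité} still permits, so they are not bad times.

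The decisive input is tightness, used through the Giroux criterion (Theorem \ref{thm:critère_Giroux}). Every convex slice $S_t$ inherits a tight neighborhood from $(S\times[0,1],\xi)$, so its dividing set must be connected; on the sphere this forces a single embedded circle, and all such circles are isotopic, so the isotopy class of the dividing set is literally constant along the good part of the arc. This is how tightness excludes the dangerous bifurcations: if crossing some $t_i$ changed the isotopy class of the dividing set, a convex slice on one side of $t_i$ would carry a disconnected dividing set, whose neighborhood would be overtwisted by Theorem \ref{thm:critère_Giroux}, contradicting tightness. Hence only \emph{dividing-set-preserving} bifurcations can occur. Every retrograde connection that appears is therefore a trivial one, and each degenerate closed leaf (a saddle--node of periodic orbits, with the two colliding orbits split across $\Gamma$ by Remark \ref{rem:orbit_sign}) merely produces a transient non-convexity without altering the transverse sign pattern of the divergence, hence without moving $\Gamma$.

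It remains to iron out each bad time locally. Around $t_i$ I would work in a thin slab $S\times(t_i-\eta,t_i+\eta)$ whose two boundary slices are convex with the same single-circle dividing set, and deform $\xi$ there, rel the complement of the slab, so that every intermediate slice becomes convex. For a degenerate closed leaf this is Giroux's bifurcation lemma \cite[Lemma 2.12]{Giroux_2000}, and for a retrograde connection it is \cite[Lemma 2.14]{Giroux_2000}; the elimination lemma in families (Lemma \ref{lem:elimination}) keeps the accompanying node--saddle pairs under control across the slab, so that the separatrix picture agrees on the two sides and the repair can be carried out continuously in $t$. Since the bad times are isolated, these repairs have disjoint supports contained in the interior, and composing them gives a single isotopy, stationary near $S_0$ and $S_1$, after which all $S_t$ are $\xi'$--convex.

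The main obstacle is exactly this last, local step. Tightness does the conceptual work of killing every genuinely dividing-set-changing bifurcation, but one must still verify that a single dividing-set-preserving degeneracy can be removed \emph{throughout} a slab---that the intermediate slices, not merely the endpoints, all become convex---while keeping the deformation away from the boundary. Checking that the elimination arcs persist across the slab and that the transient closed leaf or retrograde connection can be pushed off in a way consistent in the parameter $t$ (the joint content of the two bifurcation lemmas and Lemma \ref{lem:elimination}) is the delicate part; what makes it manageable here is the topological triviality of dividing sets on the sphere.
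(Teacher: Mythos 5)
Your global structure (genericity of the arc of foliations via \cite{Sotomayor}, Poincar\'e--Bendixson on the sphere, the convexity criterion of Proposition~\ref{prop:critère_convexité}, repairs in thin slabs rel the boundary) matches the paper, but the heart of the proof is missing, and you say so yourself: the ``last, local step'' you flag as the main obstacle is precisely what the paper's argument supplies, and the tools you invoke for it cannot do the job. The birth/death lemma and the crossing lemma (\cite[Lemmas 2.12 and 2.14]{Giroux_2000}) are \emph{descriptive} bifurcation statements --- they tell you what the family of foliations does near a degenerate leaf or a retrograde connection --- not deformation results that render the intermediate slices convex. The actual mechanism in the paper is different: since tightness and the Giroux criterion (Theorem~\ref{thm:critère_Giroux}) force every convex slice to have a connected dividing set, Corollary~\ref{cor:giroux_graph} makes the positive Giroux graph $G^+$ a \emph{tree} on both sides of each bad time $t_i$; because a tree has one more vertex than it has edges, one can choose elimination arcs pairing every positive saddle with a positive node while avoiding the separatrix that enters the retrograde connection, and then the parametric elimination lemma (Lemma~\ref{lem:elimination}) kills \emph{all} positive saddles in a slab around $t_i$. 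With no positive saddle left, no retrograde connection can occur, and every slice in the slab becomes convex. Your observation that all bifurcations are ``dividing-set-preserving'' on the sphere is true but does not help: the retrograde connection at $t_i$ obstructs convexity of $S_{t_i}$ regardless of whether the dividing set's isotopy class changes across it.

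There is a second, more local error: your treatment of degenerate closed leaves is backwards. In a tight thickened sphere no closed leaf can appear in any $\xi S_t$ at all --- on a sphere a closed leaf bounds a disk, which produces an overtwisted disk --- so degenerate closed leaves are not among the bad times and need no repair; this is the first line of the paper's proof and is also what guarantees only finitely many saddle connections occur. Your claim that a degenerate closed leaf ``merely produces a transient non-convexity without moving $\Gamma$'' is false on its own terms: by the birth/death lemma it would give birth to a pair of non-degenerate closed leaves, which (as the paper notes in the proof of Bennequin's theorem) forces a \emph{disconnected} dividing set on nearby convex slices, hence an overtwisted neighborhood --- the opposite of the benign transition you describe.
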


\begin{proof}
First note that tightness prevents the apparition of any closed leaf in any 
$\xi S_t$ since it would bound an overtwisted disk. Then we need some theory of
one-parameter families of singular foliations on the sphere \cite{Sotomayor}.
Specifically, one can assume that each $\xi S_t$ has finitely many singularities
and at worse a saddle connection or a saddle-node (but not both at the same
time). Note that finiteness of saddle connections can be achieved by
perturbation thanks to the absence of closed leaves (compare Figure
\ref{fig:csr_dl}).
Using this, the Poincar\'e-Bendixson theorem and the criterion of Proposition
\ref{prop:critère_convexité}, one can see that all surfaces $S_t$ are
$\xi$--convex except for finitely many $t_1, \dots, t_k$ where: 
\begin{itemize}
\item 
all singularities of $\xi S_{t_i}$ are saddles or nodes
\item 
there is exactly one saddle connection on $\xi S_{t_i}$ and it is retrograde,
\end{itemize}
see Figure \ref{fig:s3before} for an example.
\begin{figure}[htp]
	\begin{center}
		\includegraphics[width=5cm]{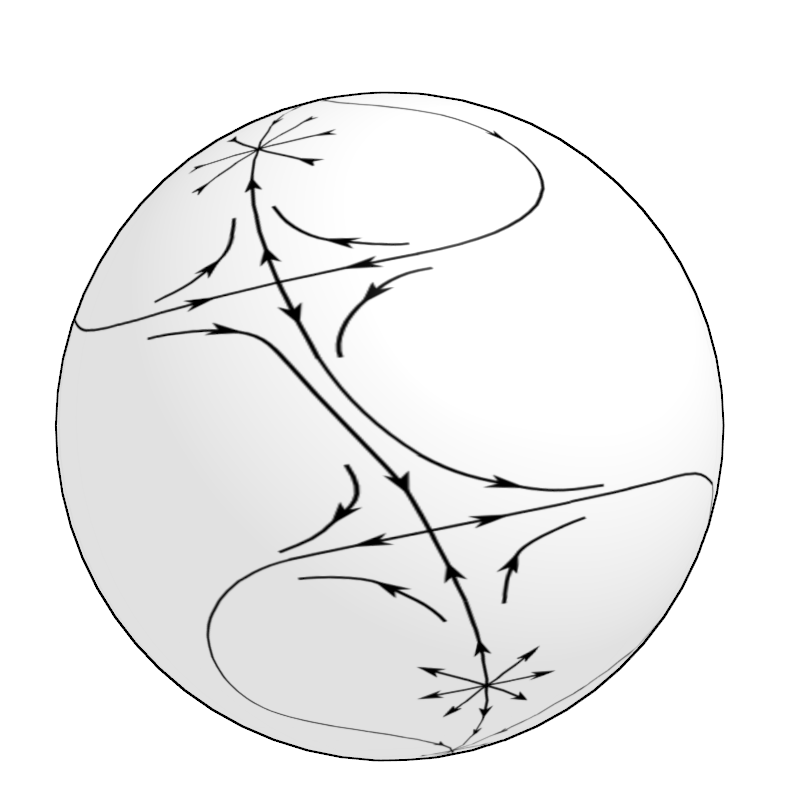}
		\includegraphics[width=5cm]{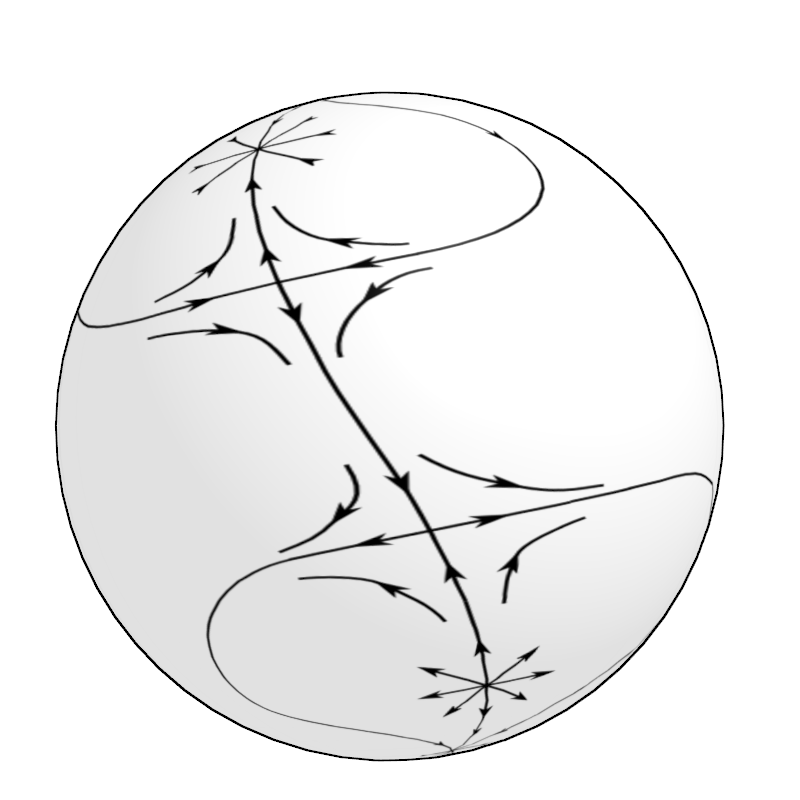}
		\includegraphics[width=5cm]{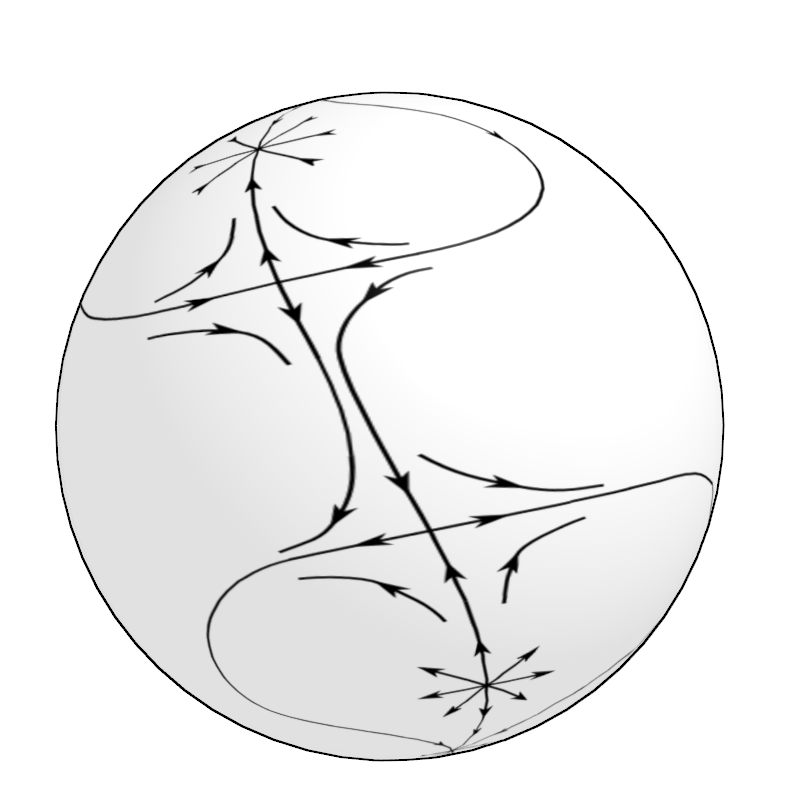}
	\end{center}
	\caption{Original movie}
	\label{fig:s3before}
\end{figure}

We will now modify $\xi$ near each $S_{t_i}$ in order to make all $S_t$
$\xi$--convex. Since we know closed leaf or non-trivial recurrence cannot arise,
it suffices to get rid of retrograde saddle connections. We concentrate on one 
$t_i$ at a time. Let $\varepsilon$ be a small positive number such that
$\xi S_t$ does not change up to homeomorphism when $t$ is either in
$[t_i-\varepsilon, t_i)$ or $(t_i, t_i + \varepsilon]$. In particular the
positive part $G^+$ of the Giroux graph deforms by isotopy in each of these
intervals. Theorem \ref{thm:critère_Giroux}, the Giroux criterion, and the link
between the Giroux graph and the dividing set explained in Corollary
\ref{cor:giroux_graph} guarantee that $G^+$ is a tree in each interval.
It implies that we can find elimination arcs between all positive saddles and
all but one positive nodes without using the separatrix which enters the saddle
connection at $t_i$ (recall in particular that the number of vertices in a tree 
is exactly the number of edges plus one). 

We now use Lemma \ref{lem:elimination}, the elimination lemma, to get rid of all
positive saddles for $t$ in $[t - \delta, t + \delta]$ for some positive
$\delta$ smaller than $\varepsilon$, see Figure \ref{fig:s3after}.
\begin{figure}[htp]
	\begin{center}
		\begin{tabular}{ccc}
		\includegraphics[width=3.8cm]{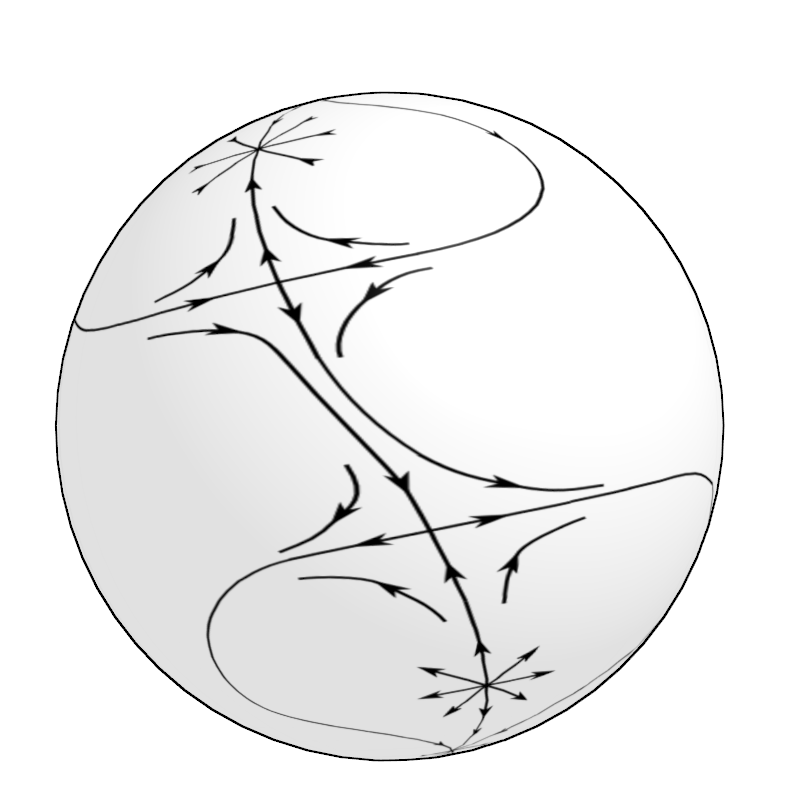}&
		\includegraphics[width=3.8cm]{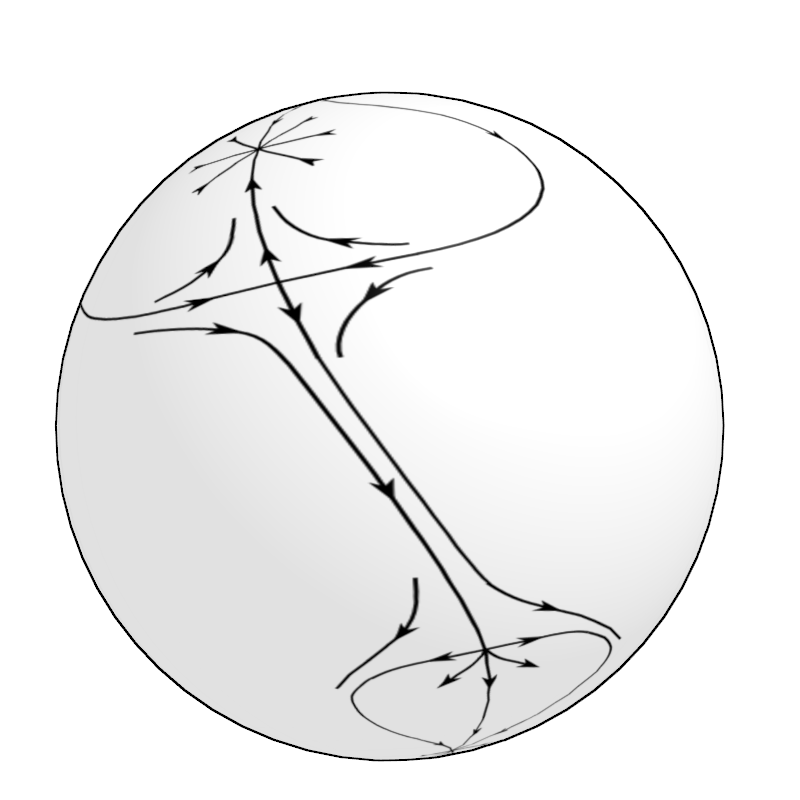}&
		\includegraphics[width=3.8cm]{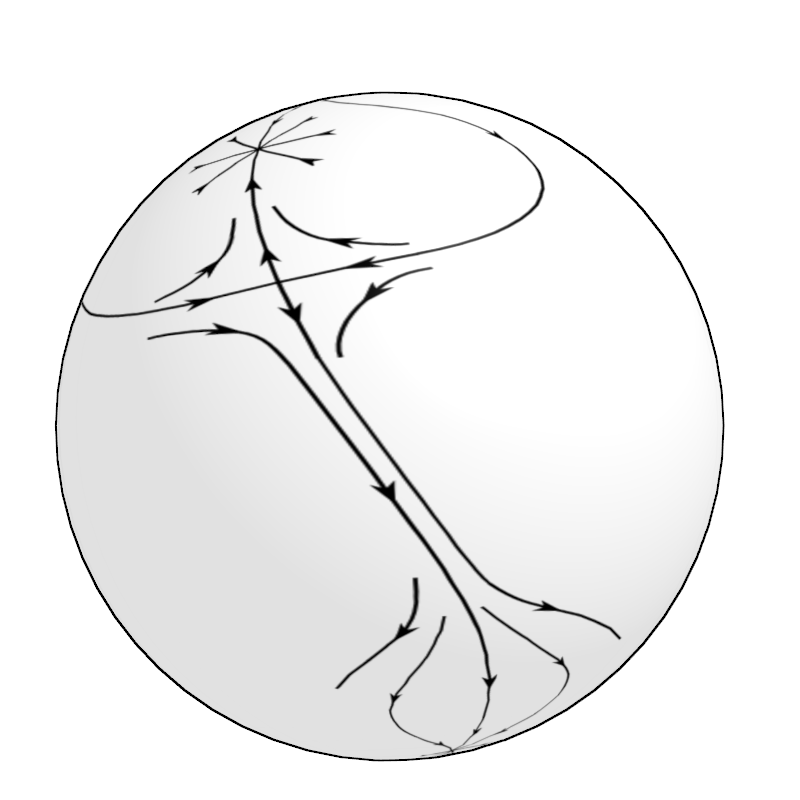}\\
		\includegraphics[width=3.8cm]{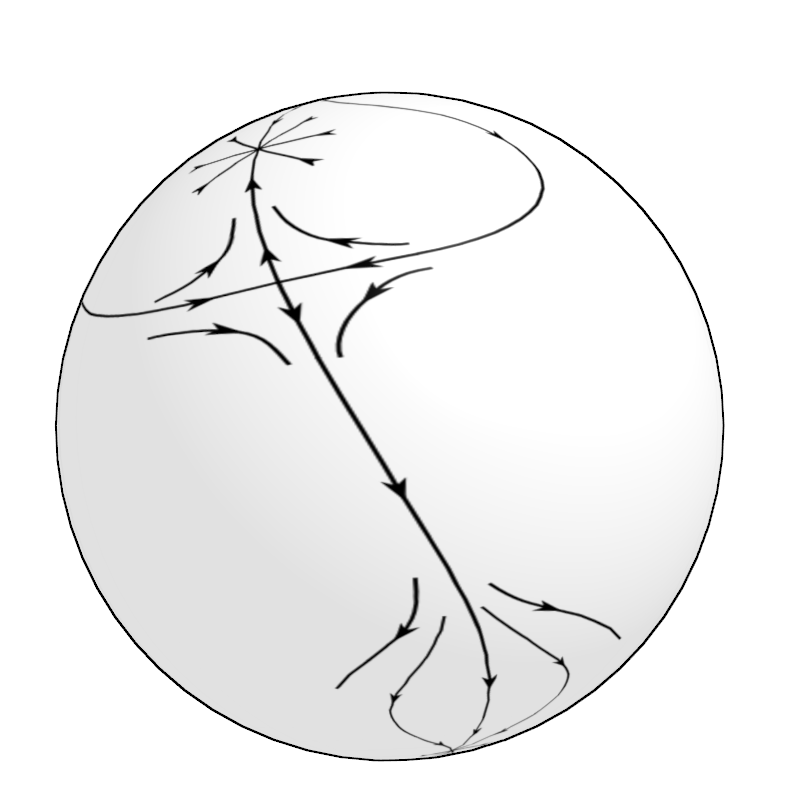}&
		\includegraphics[width=3.8cm]{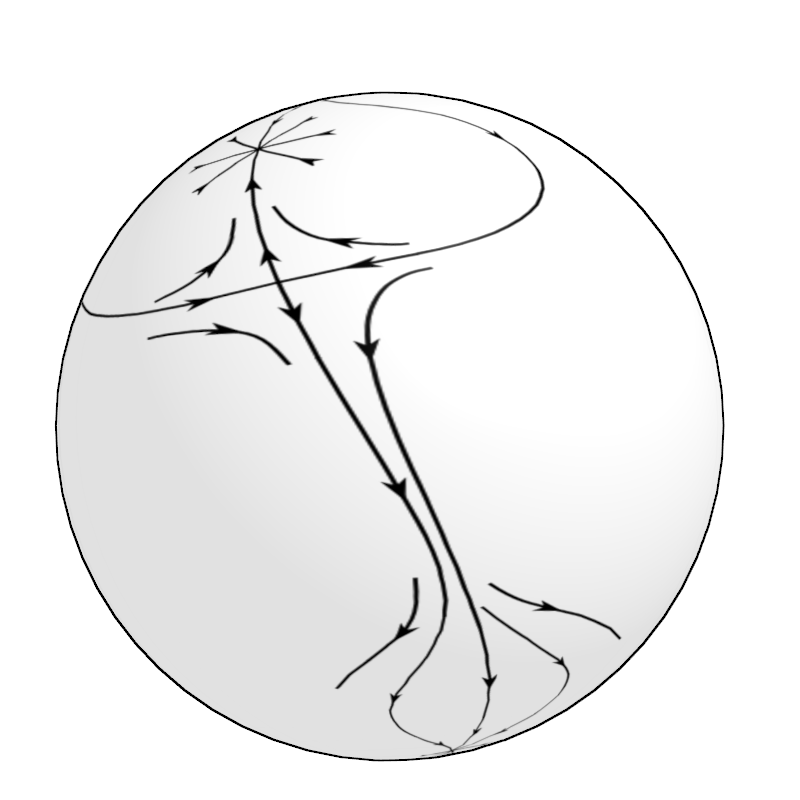}&
		\includegraphics[width=3.8cm]{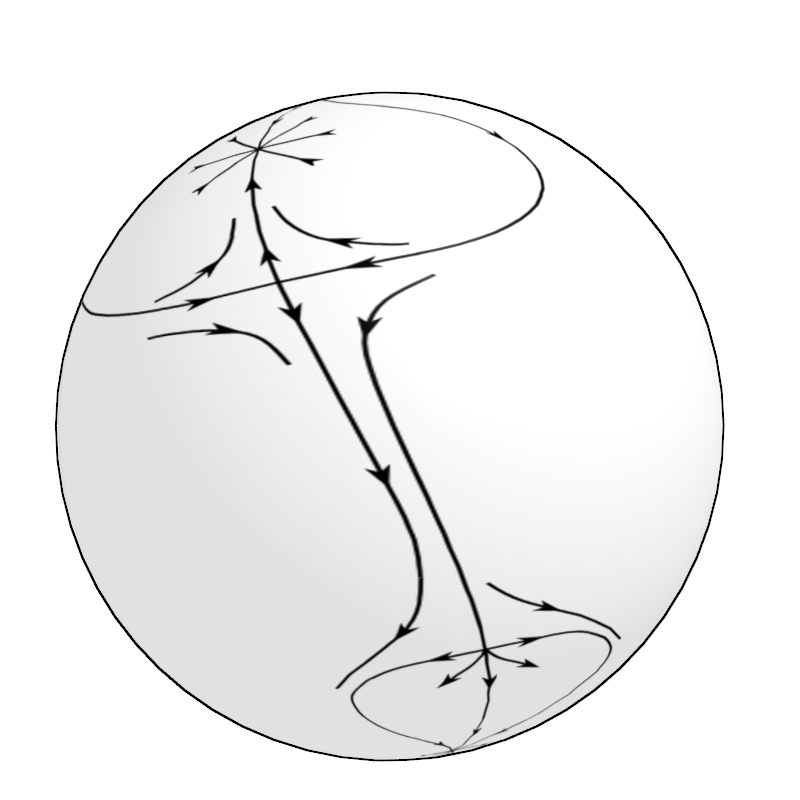}\\
		\end{tabular}
		\includegraphics[width=3.8cm]{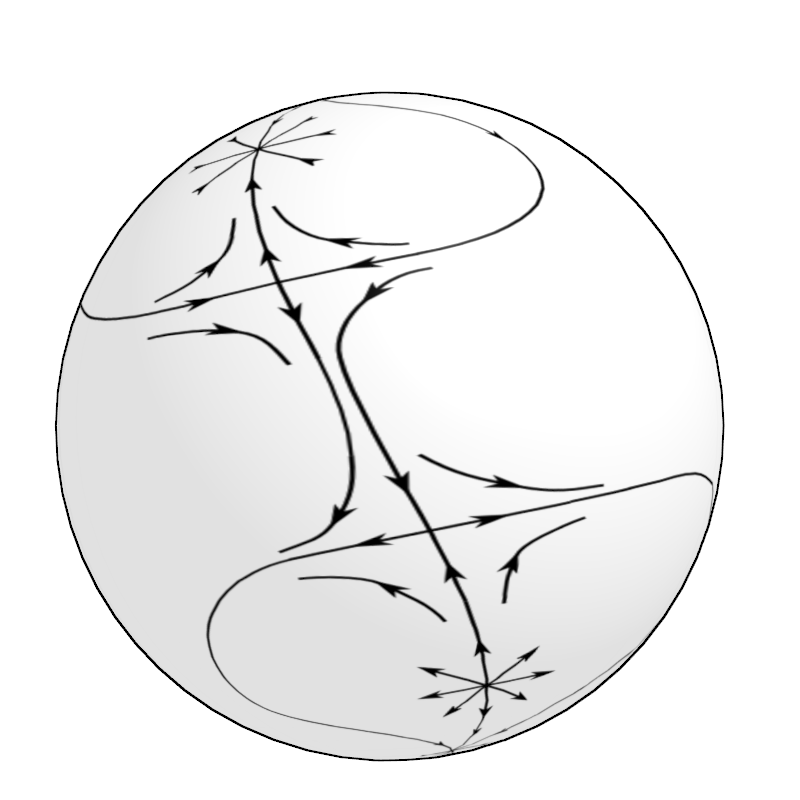}
	\end{center}
	\caption{Movie after elimination. The first picture is the same as
	in Figure \ref{fig:s3before} then a pair of singularity is replaced by a
	saddle-node then it disappears. The fourth picture corresponds to the central
	picture of Figure \ref{fig:s3before} but there is no more positive saddle so
	no saddle connection. The eliminated pair returns in the sixth picture as a
	saddle-node and the final picture is the same as in Figure \ref{fig:s3before}.}
	\label{fig:s3after}
\end{figure}
\end{proof}

Before continuing the proof of the theorem, we note two properties of the sphere
which were somehow surreptitiously used in the above proof. After the elimination
of the retrograde connections we needed the fact that no closed leaves could
appear, this is due to Schönflies theorem which would have provided an
overtwisted disk. We also needed the Poincar\'e-Bendixson theorem to prevent the
apparition of non-trivial recurrence. Suppose one tries to use the elimination
lemma to get rid of the bifurcation of Figure \ref{fig:croisement} (which is
bound to fail since the isotopy class of the dividing set changes during this
bifurcation). If one gets rid of both saddles then degenerate leaves arise. If
one gets rid of one saddle only (like we did for the sphere) then non-trivial
recurrence appear: we get a Cherry flow on the torus, see
\cite{PalisDeMelo}.

The proof of Theorem \ref{thm:eliashberg_s3} now follows from Giroux's
uniqueness lemma which allows to replace the contact structure obtained on the
thickened sphere of the previous proposition by the model.

\begin{lemma}[Uniqueness lemma {\cite[Lemma 2.7]{Giroux_2000}}]
Let $\xi_0$ and $\xi_1$ be two contact structures printing the same
characteristic foliations on the boundary of $S \times [0,1]$. If there is a
continuous family of multi-curves $\Gamma_t$ dividing both $\xi_0 S_t$ and
$\xi_1 S_t$ then $\xi_0$ and $\xi_1$ are isotopic relative to the boundary.
\end{lemma}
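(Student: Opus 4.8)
The plan is to reduce, in three stages, to the reconstruction lemmas already proved: first make $\xi_0$ and $\xi_1$ genuinely coincide near the boundary, then make them print the same characteristic foliation on \emph{every} slice $S_t$, and finally invoke the global reconstruction lemma (Lemma~\ref{lemma:global_reconstruction}). Since $\xi_0$ and $\xi_1$ print the same characteristic foliation on $S_0$ and on $S_1$, the local reconstruction lemma (Lemma~\ref{lemma:local_reconstruction}) supplies isotopies supported in collars of these two boundary spheres, after which $\xi_0 = \xi_1$ on a neighborhood of $\partial(S \times [0,1])$. All later isotopies will be supported away from this collar, hence automatically \emph{relative to the boundary}.

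For the matching step, note that by hypothesis the multi-curve $\Gamma_t$ divides both $\xi_0 S_t$ and $\xi_1 S_t$, so by Proposition~\ref{prop:convex_is_divided} each $S_t$ is convex for both structures. I would take as target the family $\F_t := \xi_0 S_t$, which is divided by $\Gamma_t$ and, after the boundary normalization, already equals $\xi_1 S_t$ for $t$ near $0$ and $1$. Running the realization lemma (Lemma~\ref{lemma:realization}) on $\xi_1$ in a family over the parameter $t$ then produces an isotopy of $S \times [0,1]$, fixing the collar, after which $\xi_1 S_t = \F_t = \xi_0 S_t$ for all $t$. The mechanism is the one behind the reconstruction and realization lemmas: away from $\Gamma_t$ one divides the contact equation by $|u|$ to reach the form $\ker(\pm dt + \beta')$, for which the contact condition collapses to the \emph{convex}, $u$-independent condition $\pm d\beta' > 0$, while near $\Gamma_t$ one uses the rigid local model of Figure~\ref{fig:neighb_Gamma}. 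Convexity makes the space of admissible $\beta'$ contractible, so I can linearly interpolate from the $\beta'$ of $\xi_1$ to that of $\F_t$, obtaining a path of contact structures to which Gray's theorem (Theorem~\ref{thm:gray}) associates the required isotopy.

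After this second isotopy, $\xi_1$ and $\xi_0$ induce the same characteristic foliation on every $S_t$ and coincide near the boundary, so the global reconstruction lemma (Lemma~\ref{lemma:global_reconstruction}) applies directly and yields an isotopy carrying $\xi_1$ to $\xi_0$. Since the interpolating data are stationary on the collar, the generating vector field vanishes there, so this isotopy is again relative to the boundary. Composing the three isotopies proves the lemma.

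The main obstacle is the family version of the realization lemma: the single-surface construction must be carried out continuously in $t$, with the generating vector fields cut off (via Remark~\ref{rem:cutoff}) so that the isotopy stays supported near each $S_t$ and fixes the two boundary spheres. The only place where the clean ``divide by $|u|$'' picture fails is along $\Gamma_t$, and there one leans on the fact—recorded after the definition of dividing sets—that the foliation near $\Gamma_t$ is forced into the controlled model of Figure~\ref{fig:neighb_Gamma}; the assumed continuity of $t \mapsto \Gamma_t$ is precisely what allows these local normalizations to be patched over the parameter.
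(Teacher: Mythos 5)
Your three--stage architecture (normalize near the boundary via Lemma~\ref{lemma:local_reconstruction}, match the characteristic foliations on every slice, conclude with Lemma~\ref{lemma:global_reconstruction}) is reasonable, and stages 1 and 3 are essentially sound: in stage 3 the interpolation between $u_t$ and $u'_t/f_t$ is constant on the collar where the structures already agree, so Gray's vector field vanishes there and the isotopy is indeed relative to the boundary. The genuine gap is stage 2, which you rightly flag as ``the main obstacle'' but then dispose of incorrectly. The realization lemma (Lemma~\ref{lemma:realization}) is a statement about a \emph{single} convex surface: its isotopy is supported in a small neighborhood of that surface, and the cut-off via Remark~\ref{rem:cutoff} takes place precisely in the $t$--direction of the homogeneous neighborhood. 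But in $S \times [0,1]$ the slices $S_t$ foliate the entire manifold, so there is no direction transverse to the family in which to cut anything off: the supports of the slice-wise realization isotopies necessarily overlap, and a deformation realizing $\F_t$ on $S_t$ perturbs the characteristic foliations $\xi_1 S_{t'}$ of all nearby slices, destroying what has been (or is to be) arranged at $t'$. The continuity of $t \mapsto \Gamma_t$ does not repair this; ``running the realization lemma in a family over $t$'' is not a corollary of the single-surface statement plus patching, and in fact it is essentially equivalent to the lemma you are trying to prove. Your proposal thus assumes its hardest step.

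This is exactly what the paper is pointing at when it says the proof ``is similar to the ones of the previous chapter but the path of contact structures is less obvious'': the correct argument (Giroux's Lemma 2.7) does not match foliations slice by slice and then reconstruct; it constructs directly a single path of contact structures on all of $S \times [0,1]$ joining $\xi_0$ to $\xi_1$, and then applies Gray's theorem (Theorem~\ref{thm:gray}) once. The hypothesis of a \emph{common} dividing family $\Gamma_t$ is what makes such a path constructible: one normalizes both equations $u^i_t\, dt + \beta^i_t$ simultaneously in $t$, putting them in the controlled model of Figure~\ref{fig:neighb_Gamma} near $\bigcup_t \Gamma_t \times \{t\}$ and in the form $\pm dt + \beta'$ on the complement, where the contact condition collapses to the convex, $u$--independent condition $\pm d\beta' > 0$ \emph{for the whole family at once}; the interpolation is then performed on the defining forms globally in $t$, not surface by surface. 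If you want to salvage your outline, stage 2 must be replaced by this simultaneous-in-$t$ interpolation at the level of equations, at which point stages 1 and 3 are absorbed into the single application of Gray's theorem.
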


The proof of this lemma is similar to the ones of the previous chapter but the
path of contact structures is less obvious.

We now explain how to get the classification of tight contact structures on
$\S^2 \times \S^1$ without extra effort. Let $\xi$ be one of them and fix some
$S = \S^2 \times \{\theta_0\}$. Using genericity of $\xi$--convex surfaces, we
can perturb $\xi$ to make $S$ convex. Then the Giroux criterion tells us that
its dividing set is connected. Using the realisation lemma, we change $\xi$ by
isotopy until $\xi S$ is standard, ie as in Figure \ref{fig:xiS_spheres}. We can
then remove a homogeneous neighborhood of $S$ and we are back to a thickened
sphere where we can apply Proposition \ref{prop:thickened_sphere} and the
uniqueness lemma.

\section{Bifurcation lemmas}

We now consider a general closed surface $S$ and any contact structure $\xi$ on
$S \times I$ for some interval $I$. For each $t$ in $I$, one has the surface
$S_t := S \times \{t\}$ and its characteristic foliation $\xi S_t$. If some
$S_{t_0}$ is not $\xi$-convex then the characteristic foliations for $t$ close to
$t_0$ are not all $C^1$--conjugate to $\xi S_{t_0}$, otherwise the global
reconstruction lemma (Lemma \ref{lemma:global_reconstruction}) would give a
contradiction. We will now try to understand what really happens when this lack
of $\xi$--convexity is explained by the obstructions we discussed in the
previous chapter, ie it comes from a degenerate closed leaf or a retrograde
connection. We will see in particular that the bifurcation is much sharper than
expected: no foliation $\xi S_t$ is even $C^0$--conjuguate to $\xi S_{t_0}$ for
$t$ in a punctured neighborhood of $t_0$. Better, we will get a very precise
description of what happens.

\subsubsection*{The birth/death lemma}

Let $L$ be a degenerate closed leaf of the characteristic foliation
$\xi S_t$. This means that the Poincar\'e return map on any curve transverse
to $L$ is tangent to the identity. One says that $L$ is positive (resp
negative) if the second derivative of this map is positive (resp negative) at
the intersection point between $L$ and the transverse curve. If $L$ is either
positive or negative then one says that it is weakly degenerate.

\begin{lemma}[Birth/Death Lemma {\cite[Lemma 2.12]{Giroux_2000}}]
\label{lemma:birth_death}
A positive (resp negative) degenerate closed orbit indicates the birth (resp
death) of a pair of non-degenerate closed leaves when $t$ increases.
\end{lemma}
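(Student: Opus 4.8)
The plan is to reduce the statement to a fold (saddle--node) bifurcation for the family of first return maps and to read the direction of the fold off the contact condition \eqref{eqn:ccg}. Fix a curve $c$ transverse to $L$ in $S$ and a coordinate $s$ on $c$ vanishing on $L$; near $L$ a closed leaf of $\xi S_t$ is exactly a fixed point of the first return map $\pi_t$ of $\xi S_t$ along $c$. Packaging these into the displacement function $\Phi(t,s) := \pi_t(s) - s$, closed leaves near $L$ are the zeros of $\Phi$, and the hypotheses fix its $2$--jet at $(t_0,0)$: one has $\Phi(t_0,0) = 0$, the degeneracy $\pi_{t_0}'(0) = 1$ gives $\partial_s\Phi(t_0,0) = 0$, and weak degeneracy gives $\partial_s^2\Phi(t_0,0) \neq 0$, with sign positive precisely when $L$ is a positive degenerate leaf.

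The main obstacle, and the geometric heart of the lemma, is to prove that the contact condition forces the remaining transversality $\partial_t\Phi(t_0,0) \neq 0$ with a definite sign; this is the exact analogue, in a family, of the forced rotation $\partial z/\partial t > 0$ in the proof of Theorem~\ref{thm:Darboux}. Working in coordinates $(\theta,s)$ along $L$, write the foliation near $L$ as $\ker\beta_{t_0}$ with $\beta_{t_0} = A\,d\theta + B\,ds$, normalized so that $B \equiv 1$ on $L$; since $L$ is a leaf one has $A \equiv 0$ on $L$, and $L$ is moreover Legendrian. A first variation of the ODE defining $\pi_t$ expresses $\partial_t\Phi(t_0,0) = -\int_0^1 \nu\,\dot A\,d\theta$, where $\dot A = \dot\beta_{t_0}(\partial_\theta)$ is $\dot\beta_{t_0}$ evaluated on the Legendrian tangent to $L$, and the integrating factor $\nu = \exp\!\big(\int_0^\theta \partial_s A\big)$ satisfies $\nu' = (\partial_s A)\nu$ and $\nu(1) = \pi_{t_0}'(0) = 1$ by degeneracy. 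Evaluating \eqref{eqn:ccg} on the frame $(\partial_\theta,\partial_s)$ along $L$ gives the pointwise bound $\dot A > u_{t_0}\,\partial_s A + \partial_\theta u_{t_0}$; the two $u_{t_0}$--terms integrate to zero against $\nu$ by an integration by parts using periodicity along $L$ together with $\nu' = (\partial_s A)\nu$ and $\nu(0) = \nu(1) = 1$, leaving $\int_0^1 \nu\,\dot A > 0$ and hence $\partial_t\Phi(t_0,0) < 0$.

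With $\partial_s\Phi(t_0,0) = 0$, $\partial_s^2\Phi(t_0,0) \neq 0$ and $\partial_t\Phi(t_0,0) < 0$ in hand, the parametrized Morse lemma (equivalently the implicit function theorem after completing the square in $s$) puts $\Phi$ in the fold normal form $c\,s^2 + b\,(t-t_0)$ with $c = \tfrac12\partial_s^2\Phi(t_0,0)$ and $b = \partial_t\Phi(t_0,0) < 0$. The closed leaves near $L$ therefore fill the parabola $c\,s^2 = -b\,(t-t_0)$, i.e. $s^2 = -(b/c)(t-t_0)$, whose coefficient $-b/c$ has the sign of $c$. For a positive leaf ($c > 0$) the parabola opens toward $t > t_0$, so a pair of leaves is born as $t$ increases through $t_0$; for a negative leaf ($c < 0$) it opens toward $t < t_0$, so the pair dies as $t$ increases. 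On each branch $s \neq 0$ one has $\partial_s\Phi \neq 0$, so the two leaves are non-degenerate and of opposite stability, which is exactly the claim. The global sign conventions --- which direction of increasing $t$ is used and the pairing of ``positive'' with ``birth'' --- are fixed by the coorientation of $\xi$ and the product orientation of $S \times I$ entering \eqref{eqn:ccg}.
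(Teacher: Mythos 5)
Your proof is correct, and it takes a genuinely different route from the one in these notes. The notes only establish a \emph{weak} version of Lemma~\ref{lemma:birth_death}, and by softer means: one lifts $\partial_t$ to a Legendrian vector field to straighten the product structure, so that the contact condition becomes the pointwise rotation condition $\partial_t \theta > 0$; comparing slopes on a small annulus $A$ around $L$ then shows the slope is of one sign throughout $A$ for $t < t_0$ and changes sign across $L$ for $t > t_0$, and the Poincar\'e--Bendixson theorem produces \emph{at least} two closed leaves after the bifurcation (with no information on their number or non-degeneracy, and no proof at all that the full statement holds). You instead prove the full fold statement: reducing to the displacement function $\Phi(t,s) = \pi_t(s) - s$ of the family of return maps, you extract the missing transversality $\partial_t \Phi(t_0,0) \neq 0$ directly from \eqref{eqn:ccg} via the integrating-factor identity $\partial_t\Phi(t_0,0) = -\int_0^1 \nu\, \dot A\, d\theta$; the decisive point, which you use exactly where it is needed, is that degeneracy ($\nu(0) = \nu(1)$) makes the boundary term $\left[u\nu\right]_0^1$ vanish in the integration by parts, so the terms involving $u$ drop out and the contact inequality integrates to a definite sign --- for a non-degenerate leaf this mechanism fails, as it must. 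I checked your pointwise inequality: expanding \eqref{eqn:ccg} in the frame $(\partial_\theta, \partial_s)$ with $A \equiv 0$, $B \equiv 1$ along $L$ does give $\dot A > u\,\partial_s A + \partial_\theta u$ (the $\partial_\theta B$ term vanishes by your normalization), and your variational formula and Duhamel computation are right. Your approach buys strictly more than the notes' argument: exactly one pair of closed leaves, their non-degeneracy and opposite stabilities, and a quantitative transversality of the stratum $\Sigma^1_\text{dl}$ to the $t$-direction, which is precisely what is invoked later in the proof of Theorem~\ref{thm:bennequin_giroux}; it is also local near $L$ and does not use Poincar\'e--Bendixson, and it is in fact close in spirit to Giroux's original proof in \cite{Giroux_2000}. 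The notes' argument, by contrast, needs no normal-form theory and makes the geometry (clockwise rotation of the characteristic foliation) visible, which is why it is the version presented here. Two harmless soft spots in your write-up, both of which you flag: the sign conventions (orientation of $L$ by the foliation, of the transversal $c$, and of $d\theta \wedge ds$) must be pinned down once and for all, though any flip reverses both the sign of $\pi''(0)$ and the direction of the contact inequality simultaneously, so the birth/death conclusion is convention-independent; and the joint smoothness of $(t,s) \mapsto \pi_t(s)$, which is standard since the foliations are non-singular near $L$ for $t$ near $t_0$.
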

See Figure \ref{fig:rotative} for examples of these situations on a thickened
torus $T \times [0, 1]$.
\begin{figure}[htp]
	\begin{center}
		\includegraphics[width=10cm]{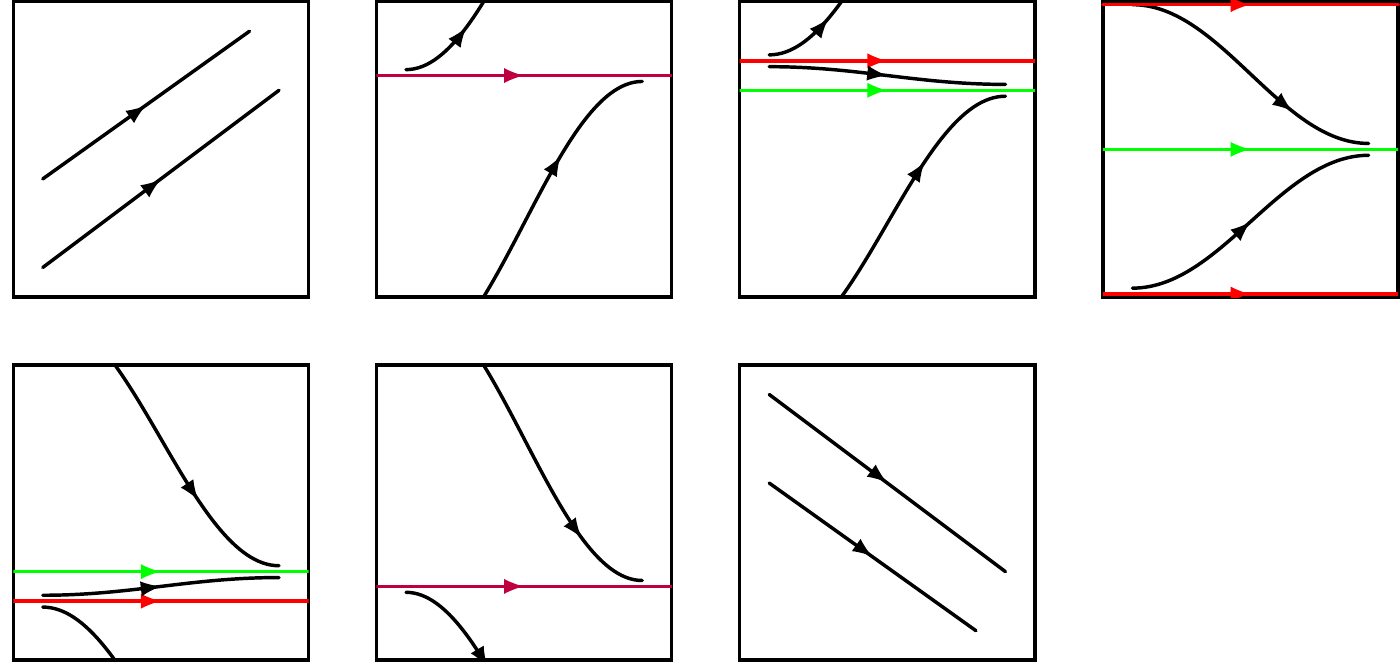}
	\end{center}
	\caption{Birth and death of closed leaves on a torus.}
	\label{fig:rotative}
\end{figure}
Looking at these pictures it is easy to prove a weak form of the birth-death
lemma which already shows how the contact condition enters. Since the contact
structure is transverse to all tori $T_t$, $t\in [0,1]$, one can lift
$\partial_t$ to a vector field tangent to $\xi$. The flow of this lift defines
a new product structure on $T \times [0,1]$ without changing the movie of
singular foliations $\xi T_t$ up to diffeomorphism.
So one can assume that all intervals $I_p = \{p\} \times [0, 1]$ are Legendrian.
If we think of foliations $\xi T_t$ as living all on $T$ then the contact
condition is equivalent to asking that, at each point $p$, $\xi T_t(p)$ rotates
clockwise as $t$ increases. Indeed, if $x$ and $y$ are coordinates on $T$,
there is a function $\theta$ such that
\[
\xi = \ker\big(\cos\theta(x,y,t)\, dx - \sin\theta(x,y,t)\, dy\big).
\]
The contact condition is then equivalent to $\partial_t \theta > 0$, compare
with the proof of the Darboux-Pfaff theorem (Theorem~\ref{thm:Darboux}).

Now the second picture in Figure \ref{fig:rotative}
shows a positive degenerate orbit $L$ in some $\xi T_{t_0}$. Let $A$ be a small
annulus around $L$.  Along $L$, the slope of $\xi T_{t_0}$ is zero and it is
positive in $A \setminus L$. So, for $t < t_0$ it was everywhere positive in $A$
and there were no closed leaf at all in $A$. For $t > t_0$,
the slope becomes negative along $L$ and stays positive along the
boundary of $A$. Then the complement of $L$ in $A$ is
made of two (half-open) annuli whose boundary are transverse to $\xi T$, see
Figure~\ref{fig:birth_death}. The Poincar\'e-Bendixson theorem guaranties that
each of these two sub-annuli contain at least one closed leaf for $t > t_0$
sufficiently close to $t_0$.
\begin{figure}[ht]
	\begin{center}
		\includegraphics[width=10cm]{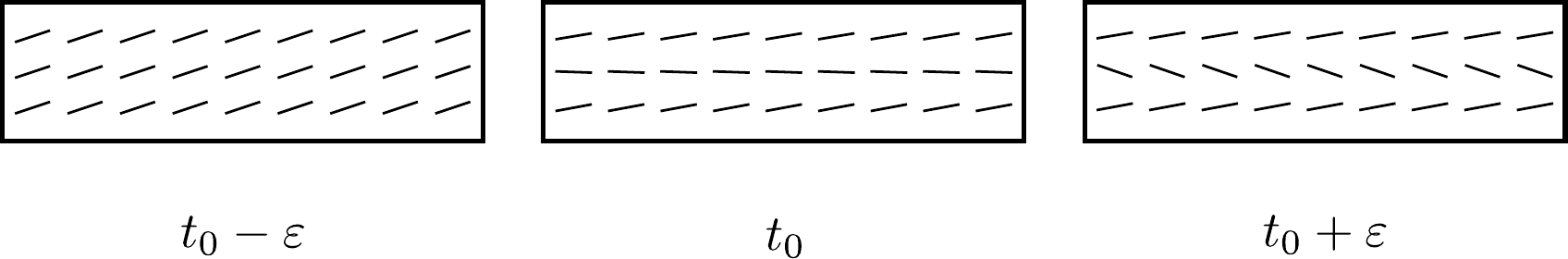}
	\end{center}
	\caption{Birth of at least a pair of periodic orbits. The annulus $A$ is
	obtained by gluing left and right. The circle $L$ is at mid-height of each
	annulus.}
	\label{fig:birth_death}
\end{figure}

So we proved the following weak version of the birth/death lemma which will be
sufficient for our purposes: if there is a positive degenerate closed orbit $L$
at time $t_0$ then there is an annulus $A$ around $L$ and some positive
$\varepsilon$ such that there is no closed leaves in $A$ for $t$ in
$(t_0-\varepsilon, t_0)$ and at least two for $t$ in 
$(t_0, t_0 + \varepsilon)$.  The death case on the bottom row of Figure
\ref{fig:rotative} is explained similarly. Note that nothing required $T$ to be
a torus in this explanation, one only has to work near $L$.

\subsubsection*{The crossing lemma}

\begin{lemma}[Crossing Lemma {\cite[Lemma 2.14]{Giroux_2000}}]
\label{lemma:croisement}
Assume that there is a retrograde connection at time $t_0$.  For $t$ close to
$t_0$, there is a negative singularity $b^-_t$, a positive one $b^+_t$, an
unstable separatrix $c^-_t$ of $b^-_t$ and a stable one $c^+_t$ of $b^+_t$ such
that $c^-_{t_0} = c^+_{t_0}$.

For $t$ close to $t_0$, one can track separatrices using their intersection with
an oriented curve positively transverse to $\xi S_t$. Then, for $t < t_0$ (resp
$t > t_0$), the separatrix $c^-_t$ is below (resp above) $c^+_t$.
\end{lemma}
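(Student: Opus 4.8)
The plan is to peel off the purely topological \emph{existence} of the two families from the quantitative statement about the \emph{direction} of the crossing, and to extract the latter from the contact condition in the same way monotone rotation was used in the birth/death discussion. A retrograde connection joins two singularities of opposite sign and cannot involve a node, so $b^-_{t_0}$ and $b^+_{t_0}$ are saddles; as singularities of a characteristic foliation they have non-zero divergence and are therefore hyperbolic. By the implicit function theorem they persist as smooth families $b^\pm_t$ for $t$ near $t_0$, with smoothly varying local stable and unstable manifolds, and this produces the unstable separatrix $c^-_t$ of $b^-_t$ and the stable separatrix $c^+_t$ of $b^+_t$, with $c^-_{t_0}=c^+_{t_0}$ equal to the connecting orbit $O$ by hypothesis. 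This already yields the first paragraph of the statement.

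For the direction statement I would first lift $\partial_t$ to a vector field tangent to $\xi$ and flow along it, exactly as in the weak birth/death lemma, so that all the $\xi S_t$ are regarded as a single path of foliations on $S$ for which the contact condition reads $\partial_t\theta>0$: the line field rotates monotonically at every point as $t$ increases. I then fix an oriented arc $\gamma$ positively transverse to the foliation and crossing $O$ at an interior point, and let $a(t)$ and $b(t)$ be the coordinates along $\gamma$ of the points $c^-_t\cap\gamma$ and $c^+_t\cap\gamma$. Since $a(t_0)=b(t_0)$, the whole second paragraph reduces to showing that $t\mapsto a(t)-b(t)$ vanishes transversally at $t_0$ with the sign that carries ``below'' to ``above'' as $t$ grows, that is $a'(t_0)-b'(t_0)>0$.

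The sign comes from a first variation along $O$. Writing a nearby leaf of $\xi S_t$ as a normal graph $O(x)+\eta(x,t)N(x)$ and differentiating the leaf condition $\beta_t=0$ on tangent vectors, one obtains to first order a linear equation $\partial_x\eta=A(x)\,\eta+(t-t_0)\,B(x)$ with forcing $B=-\dot\beta_{t_0}(O')/\beta_{t_0}(N)$. The separatrix $c^-_t$ is the solution pinned at the $x\to-\infty$ saddle and $c^+_t$ the one pinned at the $x\to+\infty$ saddle; hyperbolicity makes both first variations finite, so that $a'(t_0)-b'(t_0)=\int_{-\infty}^{+\infty}\Phi(0,s)\,B(s)\,ds$, where $\Phi$ is the positive fundamental solution of the homogeneous equation. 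The decisive point is that $B$ has a fixed sign: in the rotating model $\dot\beta_{t_0}(O')=-\partial_t\theta<0$, which is precisely the sign forced by the contact condition, while the orientation of $O$ from the negative to the positive saddle fixes the sign of $\beta_{t_0}(N)$. Hence the integrand keeps constant sign and the integral is non-zero with the predicted sign; this is also the geometric content of the remark that along a retrograde connection $\xi$ turns opposite to the way it turns along a Legendrian foliation.

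The hard part is concentrated in this last step. One must justify the linearization rigorously, namely that the separatrices depend differentiably on $t$ and that their first variations are exactly the two solutions selected by the boundary conditions at the hyperbolic ends; this is where smooth dependence of stable and unstable manifolds on parameters is used, and where the retrograde orientation is what guarantees that the weight $\Phi(0,\cdot)\,B$ is integrable near each saddle. One must also match the computed sign of $a'(t_0)-b'(t_0)$ with the declared meaning of ``below'' and ``above'' relative to the chosen coorientation and orientation of $\gamma$. Everything else is bookkeeping around the monotone rotation already established.
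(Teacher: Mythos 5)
Your proposal fails at exactly the point where the paper itself warns the difficulty lies (note the paper does not prove this lemma; it only explains how the contact condition enters and defers to Giroux for the ``full story''). Your plan to lift $\partial_t$ to a Legendrian vector field and work in a rotating model where the contact condition reads $\partial_t\theta>0$ at every point is impossible here: such a lift requires $\xi$ to be transverse to the surfaces $S_t$, and $\xi$ is tangent to $S_{t_0}$ at the saddles $b^\pm_{t_0}$, which sit at the two ends of the connection $c$. This is precisely the difference with the birth/death setting, where one works near a closed leaf away from all singularities. As a consequence, your decisive claim --- that the Melnikov-type forcing term $B=-\dot\beta_{t_0}(O')/\beta_{t_0}(N)$ keeps a constant sign along the connection, so that $a'(t_0)-b'(t_0)=\int\Phi(0,s)B(s)\,ds$ is non-zero with the predicted sign --- is unjustified and in general false. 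Writing the contact condition as $u_t\Div Y_t-du_t(Y_t)+\dot\beta_t(Y_t)>0$, one has $u<0$ near $b^-$ and $u>0$ near $b^+$ along $c$, and wherever $u\neq 0$ the terms $u\Div Y-du(Y)$ can dominate, so $\dot\beta(Y)$ may perfectly well change sign along the connection: the foliation is allowed to rotate the ``wrong'' way on portions of $c$. All the contact condition forces pointwise is that, since $u(b^-)<0<u(b^+)$, there exists at least one point $p$ on $c$ with $u(p)=0$ and $du(Y)\geq 0$, at which $\dot\beta(Y)>0$. That single-point observation is exactly what the paper establishes, with the explicit caveat that it ``is very far from proving the crossing lemma''; your argument adds the fixed-sign-integrand step on top of it, and that step does not hold.

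Two smaller points. First, in your opening paragraph, non-zero divergence does not imply hyperbolicity: a saddle-node has non-zero divergence (its linearization has non-zero trace but a zero eigenvalue), and the paper treats saddle-nodes as generic in one-parameter families. The persistence of $b^\pm_t$ and the smooth dependence of their separatrices require the genericity hypothesis that the saddles at time $t_0$ are nondegenerate, which must be assumed, not derived from the divergence condition. Second, you reduce the conclusion to transversal vanishing $a'(t_0)-b'(t_0)>0$; that is sufficient but stronger than the stated below/above conclusion, and in any case the real work --- showing the net splitting has the claimed sign despite the integrand having no pointwise sign --- is the delicate global content of Giroux's proof, which your proposal does not supply.
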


Figure \ref{fig:croisement} shows a retrograde saddle connection on a torus
obtained by gluing top/bottom and left/right. Singularities in the lower part
are negative while those in the upper part are positive.  The saddle connection
is marked by an arrow. The crossing Lemma tells us that the negative separatrix
has to turn to its right after the connection.
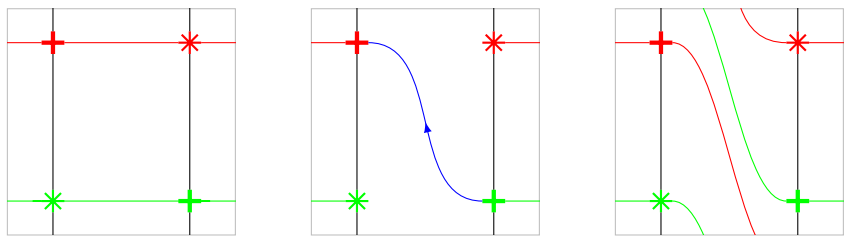
\begin{figure}[htp]
\centering

\beginpgfgraphicnamed{croisement}
\begin{tikzpicture}[x=\echelle, y=\echelle, color=gray!50]

\def\sing{
\sellep{h+ g}{.2, .85}; \sellep{h+ d}{1.2, .85}; \sellep{h+ h}{.2, 1.85};\sellep{h+ b}{.2, .85-1};

\sellen{h- g}{-.2, .15}; \sellen{h- d}{.8, .15}; \sellen{h- b}{.8, -.85};
\sellen{h- h}{.8, 1.85};
\noeudp{e+ g}{.8-1, .85}; \noeudp{e+}{.8, .85}; \noeudp{e+ b}{.8, -.15}
\noeudn{e-}{.2, .15}; \noeudn{e- d}{1.2, .15}; \noeudn{e- h}{.2, 1.15};
}

\def\grapheinvariable{
\sepp{e+ g}{droite}{h+ g}{gauche} ;
\sepp{e+}{droite}{h+ d}{gauche} ;

\sepn{h- g}{droite}{e-}{gauche} ;
\sepn{h- d}{droite}{e- d}{gauche} ;

\draw[black]  (e-.haut) -- (h+ g.bas);
\draw[black]  (e-.bas) -- (h+ b.haut);
\draw[black]  (h- d.haut) -- (e+.bas);
\draw[black]  (h- d.bas) -- (e+ b.haut);
\draw[black]  (h- h.bas) -- (e+.haut);
\draw[black]  (h+ g.haut) -- (e- h.bas);

}

\begin{image1}

 \sepp{h+ g}{droite}{e+}{gauche} ;
 \sepn{e-}{gauche}{h- d}{droite} ;

\end{image1}

\begin{image2}

 \sepc{h- d}{gauche}{h+ g}{droite};

\end{image2}

\begin{image3}
 
 \sepp[.5]{h+ g}{droite}{e+ b}{gauche} ;
 \sepp{h+ h}{droite}{e+}{gauche} ;

 \sepn[.5]{e- h}{droite}{h- d}{gauche} ;
 \sepn[.5]{e-}{droite}{h- b}{gauche} ;

\end{image3}

\end{tikzpicture}
\endpgfgraphicnamed
\caption{Retrograde saddle connection on a torus.}
\label{fig:croisement}
\end{figure}

The proof of the crossing lemma is rather delicate so we will only try to go as
far as explaining how the contact condition and the fact that the connection is
retrograde can enter the discussion. Each time we drop the $t$ subscript it
means $t = t_0$. Also we set $c = c^+ = c^-$.
Compared to the situation of the birth/death lemma, there is no hope to have a
neighborhood $S \times [0, 1]$ with $[0, 1]$ tangent to $\xi$ near $c$ since
$\xi$ is tangent to $S$ at $b^\pm$. However we will find at least one point on
$c$ where the characteristic foliation has to turn clockwise. If $Y_t$ is a
vector field defining $\xi S_t$, the contact condition \eqref{eqn:ccg} can be
expressed as: $u_t \Div Y_t - du_t(Y_t) + \dot\beta_t(Y_t) > 0$.  
The sign of singularities is the sign of $u_t$ so $u(b^-) < 0$ and $u(b^+) > 0$.
Hence there is some point $p$ on $c$ such that $u(p) = 0$ and $du(Y) \geq 0$.
Here we used that $c$, hence $Y$, is oriented from $b^-$ to $b^+$. At $p$, the
contact condition becomes $\dot\beta(Y) > du(Y)$ so $\dot\beta(Y) > 0$. This
is the announced rotation. Since $\beta(Y) = 0$, we have that, at $p$, $\xi S_t$
is positively transverse to $c$ for $t > t_0$ and negatively transverse for $t <
t_0$. Of course this observation is very far from proving the crossing lemma,
see \cite[Lemma 2.14]{Giroux_2000} for the full story.

\section{Bennequin's theorem}

The goal of this section is to prove that the standard contact structure on
$\R^3$ is tight. This was originally proved by Bennequin, without the word tight
which was introduced by Eliashberg. 

Suppose there is an overtwisted disk in the standard contact structure on
$\R^3$. Since it is compact, it is contained in some finite radius ball. We can
also assume it misses a small ball around the origin (for instance we can use
the contact vector field $\partial_z$ to push it upward until this is true).
Recall we saw in Example~\ref{ex:radial_field} there is a contact vector field
$X$ on $\R^3$ which is transverse to all Euclidean spheres around the origin. So
these spheres are all $\xi$--convex and divided by the equator $\{ z = 0\}$
where $X$ is tangent to $\xi$. 
The above discussion shows that Bennequin's theorem is a consequence of the following
statement.

\begin{theorem}[Bennequin seen by Giroux {\cite[Theorem 2.19]{Giroux_2000}}]
\label{thm:bennequin_giroux}
Let $\xi$ be a contact structure on a thickened sphere $S \times [-1, 1]$. 
If all spheres $S_t$ are $\xi$--convex with connected dividing set then $\xi$
is tight.
\end{theorem}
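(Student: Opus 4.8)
The plan is to argue by contradiction, using the bifurcation analysis of this section as a substitute for ``gluing tightness'' across the slices. Suppose $\xi$ were overtwisted, and fix an embedded overtwisted disk $\Delta \subset S \times [-1,1]$. The first thing I would do is record the strong rigidity that the hypothesis forces on the whole movie $(\xi S_t)_{t \in [-1,1]}$. Since convexity is an open condition and connectedness of the dividing set on a sphere is stable, I may perturb so that $(\xi S_t)$ is a generic one-parameter family, hence Morse--Smale away from finitely many times where a saddle--node or a saddle connection occurs. Now convexity of \emph{every} $S_t$ means, by Proposition~\ref{prop:critère_convexité}, that no $\xi S_t$ ever has a degenerate closed leaf or a retrograde connection. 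By the birth/death lemma (Lemma~\ref{lemma:birth_death}) the creation or destruction of a closed leaf would require a degenerate closed leaf, so no closed leaf is ever born; and by the crossing lemma (Lemma~\ref{lemma:croisement}) the only bifurcation able to change the isotopy class of the dividing set is a retrograde connection, which is likewise excluded. Thus the dividing set $\Gamma_t$ remains a single circle, isotopic for all $t$, and the population of closed leaves is constant throughout the family.

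Next I would exploit the defining feature of $\Delta$ against this rigid background. The point of an overtwisted disk is that it carries ``too much rotation'': its boundary is a degenerate closed leaf of the characteristic foliation $\xi\Delta$, and near it $\xi$ turns in the direction opposite to that of a Legendrian foliation, exactly as for a retrograde connection. Since each $S_t$ is a sphere with connected dividing set, the Giroux criterion (Theorem~\ref{thm:critère_Giroux}) says every $S_t$ has a \emph{tight} neighborhood, so no slice can itself contain a contractible closed leaf or a bigon of the dividing set; by the realization lemma (Lemma~\ref{lemma:realization}) the characteristic foliation of each convex slice is completely controlled by $\Gamma_t$ and in particular admits no such configuration. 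The strategy is therefore to transfer the overtwisted behaviour of $\Delta$ onto one of the spheres and invoke the Giroux criterion there for the contradiction.

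The genuine obstacle is precisely this transfer: turning the \emph{global} existence of $\Delta$ somewhere in the thickened sphere into a \emph{local} forbidden configuration on a single slice $S_{t_0}$. I expect this to require a sweeping argument following the intersections $\Delta \cap S_t$ as $t$ varies, combined with Poincar\'e--Bendixson on the sphere: as one sweeps across $\Delta$, the rotation it carries must be accounted for in the family $\xi S_t$, and I would argue that it cannot be absorbed without, at some intermediate time, either giving birth to a closed leaf or producing a retrograde connection. Both are forbidden by convexity of the corresponding $S_t$, so $\Delta$ cannot exist and $\xi$ is tight. The delicate bookkeeping --- tracking separatrices through the saddle--node and saddle-connection bifurcations and checking that each auxiliary perturbation preserves convexity of every slice --- is the technical heart of the argument and is the content of \cite[Theorem 2.19]{Giroux_2000}.
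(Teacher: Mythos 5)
Your first paragraph derives only a restatement of the hypothesis: with \emph{all} $S_t$ assumed $\xi$--convex there are no bifurcations to control at all, so the ``rigidity'' of the movie $(\xi S_t)$ gives you no purchase on an overtwisted disk $\Delta$ sitting in general position in the product. The genuine gap is exactly the step you flag as the obstacle and then leave to the reference: the transfer mechanism you sketch --- sweeping the curves $\Delta \cap S_t$ and arguing the ``rotation cannot be absorbed'' --- is not how the argument can go, and it faces a structural problem. Tightness is not a local property: the Giroux criterion (Theorem~\ref{thm:critère_Giroux}) gives a tight \emph{neighborhood} of each slice, and the product is covered by such neighborhoods, yet this does not preclude an overtwisted disk spread across many slices; the intersections $\Delta \cap S_t$ carry no direct characteristic-foliation meaning on the slices, and Poincar\'e--Bendixson on each $S_t$ says nothing about them. (A side slip: by the definition in these notes the boundary of $\Delta$ is a circle of \emph{singularities} of $\xi\Delta$, not a degenerate closed leaf; the degenerate leaf appears on nearby spheres as in Figure~\ref{fig:deg_leaf}.)

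The paper's proof introduces a \emph{second parameter} precisely to make the transfer possible. One isotopes $\Delta$ onto the middle sphere $S_0$, obtaining a path of contact structures $\xi_s$ with $\xi_0 = \xi$ and with $S_0$ made $\xi_1$--convex with \emph{disconnected} dividing set; along this path the slices $S_t$ are of course allowed to lose convexity, which is why your all-slices-convex framework cannot see the contradiction. One then studies the square of foliations $\xi_s S_t$, splits the convex locus into $\Omega_c$ and $\Omega_d$ according to connectedness of the dividing set, takes the minimal $s_0$ over the closure of $\Omega_d$ (positive by hypothesis, since $\Omega_d$ meets $\{s=1\}$ but not $\{s=0\}$), and rules out every stratum that $(s_0,t_0)$ could lie in. The bifurcation lemmas (Lemmas~\ref{lemma:birth_death} and~\ref{lemma:croisement}) dispose of the codimension-one strata and most codimension-two ones; the only delicate case, two crossing retrograde saddle connections as in Figure~\ref{fig:sigma11sc}, is killed by a purely combinatorial statement about trees (Proposition~\ref{prop:trees}), whose symmetry in the two bifurcation arcs relies on the directional content of the crossing lemma --- separatrices turn to their \emph{right} as $t$ increases --- together with Corollary~\ref{cor:giroux_graph} identifying connected dividing sets on the sphere with $\Gamma_+$ being a tree. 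None of this two-parameter machinery, which is the actual content of the theorem, is present or replaceable by the sweeping argument you propose.
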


\subsubsection*{Families of movies}

In order to prove Theorem \ref{thm:bennequin_giroux}, we first need some
preparations from dynamical systems.
Suppose that $\xi_0$ and $\xi_1$ are two contact structures which print generic
movies on $S \times [-1, 1]$. If they are isotopic, one gets a 2-parameters
family $\xi_s S_t$ of characteristic foliations. Thom transversality and a
little bit of normal form theory tells us that we can perturb the family until all
these foliations have finitely many singularities which are either nodes,
saddles or saddle-nodes. Further perturbations allow to make sure that all
closed leaves have a Poincar\'e return map which is at worse tangent to the
identity up to order $2$, the worse case happening only for isolated values of
$(s, t)$. 

Up to this point there was nothing specific to the sphere. The
first special property of $\S^2$ which is crucial in the following is the
Poincar\'e-Bendixson theorem which says that, since we have isolated
singularities for all our foliations, the Poincar\'e-Bendixson property
automatically holds. In particular we can apply the convexity criterion of Proposition
\ref{prop:critère_convexité}.
In the square $[0, 1] \times [-1, 1]$ the set $\Omega$ of points $(s, t)$ such
that $S_t$ is $\xi_s$--convex is a dense open set.
We denote by $\Sigma$ the complement of $\Omega$. It is a union of injectively
immersed submanifolds of $[0, 1] \times [-1, 1]$.
In codimension 1, one sees:
\begin{itemize}
	\item $\Sigma^1_\text{dl}$ where the characteristic foliation has a single
		weakly degenerate closed leaf and no retrograde saddle connection and no
		degenerate singularity, see Figure \ref{fig:deg_leaf}
	\item $\Sigma^1_\text{sc}$ where the characteristic foliation has a single
		retrograde saddle connection and no degenerate closed leaf or singularity,
		see Figure \ref{fig:croisement}.
\end{itemize}
The bifurcation lemmas imply that these two subsets are injectively immersed
submanifold of the square transverse to the $t$ direction. In addition,
the bifurcation lemmas imply that components of $\Sigma^1_\text{dl}$ can accumulate only on $\Sigma^1_\text{sc}$,
see Figure \ref{fig:csr_dl} for an example of accumulation.
We set $\Sigma^1 = \Sigma^1_\text{dl} \cup \Sigma^1_\text{sc}$.
\begin{figure}[htp]
	\begin{center}
		\includegraphics{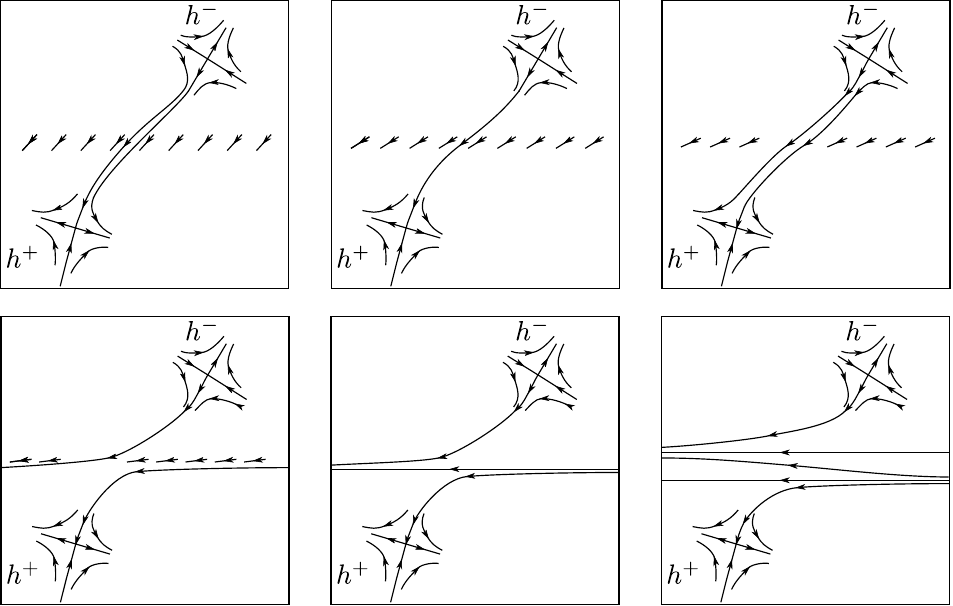}
	\end{center}
	\caption{Saddle connections accumulating a degenerate closed leaf. This is a
	movie of characteristic foliations on an annulus obtained by gluing the left
	and right sides of each square. A degenerate closed leaf is appearing in the
	middle. Leaves spiral more and more in this region, resulting in infinitely
	many retrograde saddle connections.}
	\label{fig:csr_dl}
\end{figure}

\begin{indented}
	The accumulation of retrograde saddle connections in Figure~\ref{fig:csr_dl} is
	not a phenomena which we can get rid of by perturbation: it is structurally
	stable in a 1-dimensional family, see~\cite{Sotomayor}. However, Giroux's
	discretization lemma \cite[lemma 15]{Giroux_transfo} states that any contact
	structure on the product $F \times I$ of a closed surface and an interval with convex
	boundary is isotopic relative to the boundary to a contact structure such
	that only finitely many $F_t$ are non-convex. This isotopy cannot be made
	arbitrarily small. It uses first the dynamics banalization lemma 
	\cite[Lemma 2.10]{Giroux_2000} which gets rid of non-trivial recurrence and
	then replaces degenerate leaves with retrograde saddle connexions. Both moves
	are non-perturbative.
\end{indented}

\noindent
In codimension 2, one sees:
\begin{itemize}
\item 
$\Sigma^{11}$ where two codimension one strata intersect transversely, see
Figure \ref{fig:sigma11sc} and also Figure \ref{fig:csr_texture} for a realistic
view of the central picture in the case of example
\ref{example:connection_selle}.
	\begin{figure}[p]
		\begin{center}
			\includegraphics{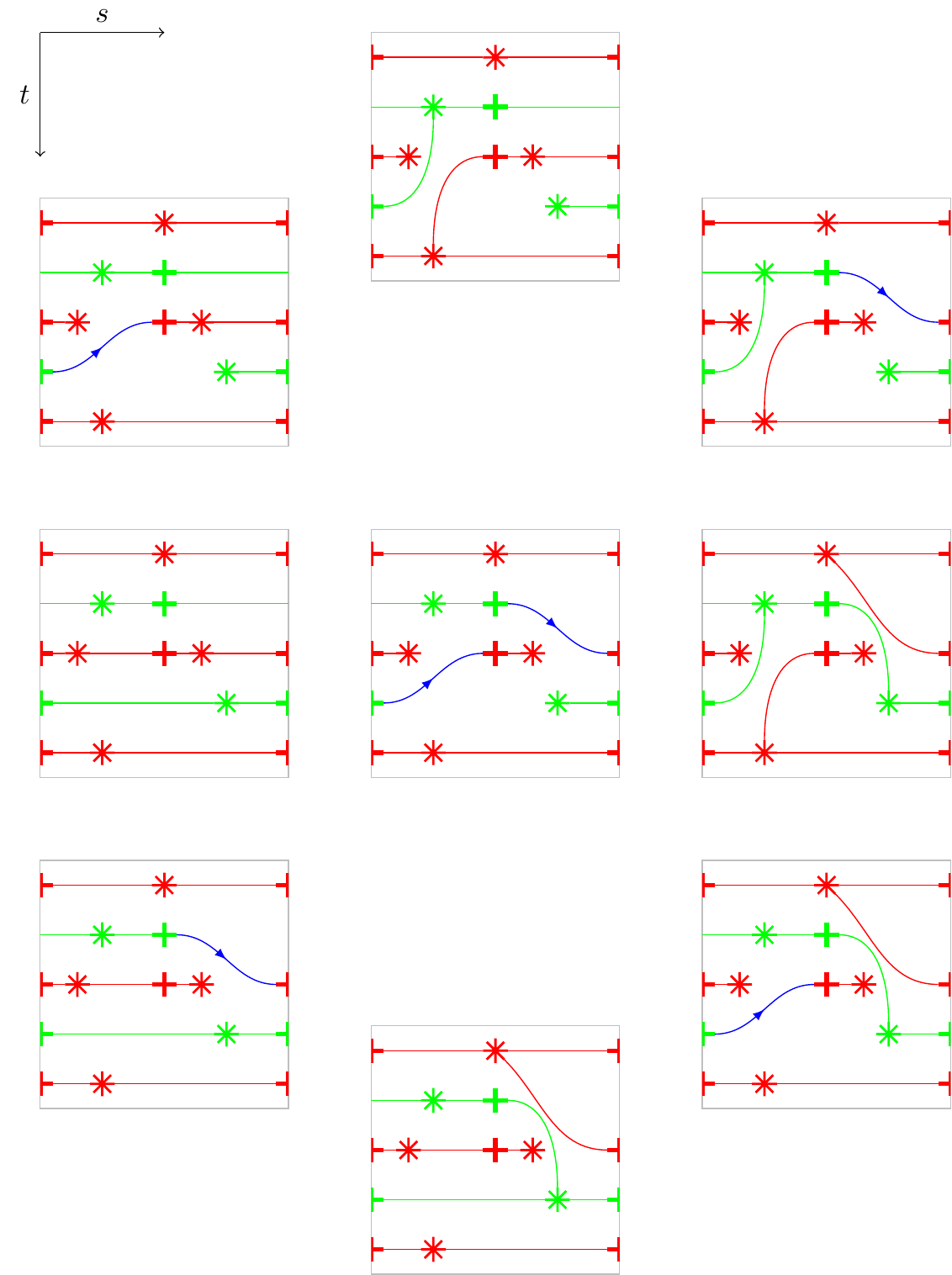}
		\end{center}
		\caption{Intersection of two strata of retrograde saddle connections on a
		torus. It is a good exercise to draw the Giroux graphs of all convex
		surfaces appearing to see the non-trivial effect of this codimension 2
		phenomenon on the dividing sets, contrasting with the discussion below.}
		\label{fig:sigma11sc}
	\end{figure}
\item
$\Sigma^2_\text{sc}$ where there is a retrograde connection between a saddle and
a saddle-node. These points adhere to exactly one stratum in 
$\Sigma^1_\text{sc}$, this typically happens in the proof of the classification
on $S^3$ as an intermediate step between Figures \ref{fig:s3before} and
\ref{fig:s3after}
\item
$\Sigma^2_\text{dl}$ where there is a degenerate orbit corresponding to the
fusion of two components of $\Sigma^1_\text{dl}$, see Figure \ref{fig:sigma2dl}
for the picture in the $(s, t)$ square and Figure \ref{fig:sigma2dlfoliations}
for the corresponding foliations.
\end{itemize}
\begin{figure}[htp]
	\begin{center}
		\includegraphics{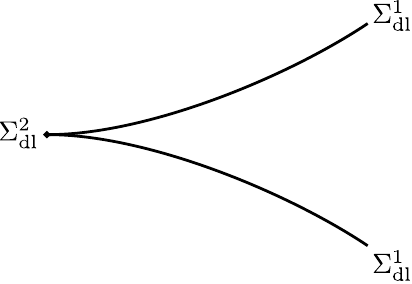}
	\end{center}
	\caption{The central point is in $\Sigma^2_\text{dl}$. It corresponds to a
	degenerate closed leaf with $\pi''(0) = 0$ but $\pi^{(3)}(0) < 0$, see Figure
	\ref{fig:sigma2dlfoliations} for the corresponding foliations.}
	\label{fig:sigma2dl}
\end{figure}
\begin{figure}[htp]
	\begin{center}
		\includegraphics{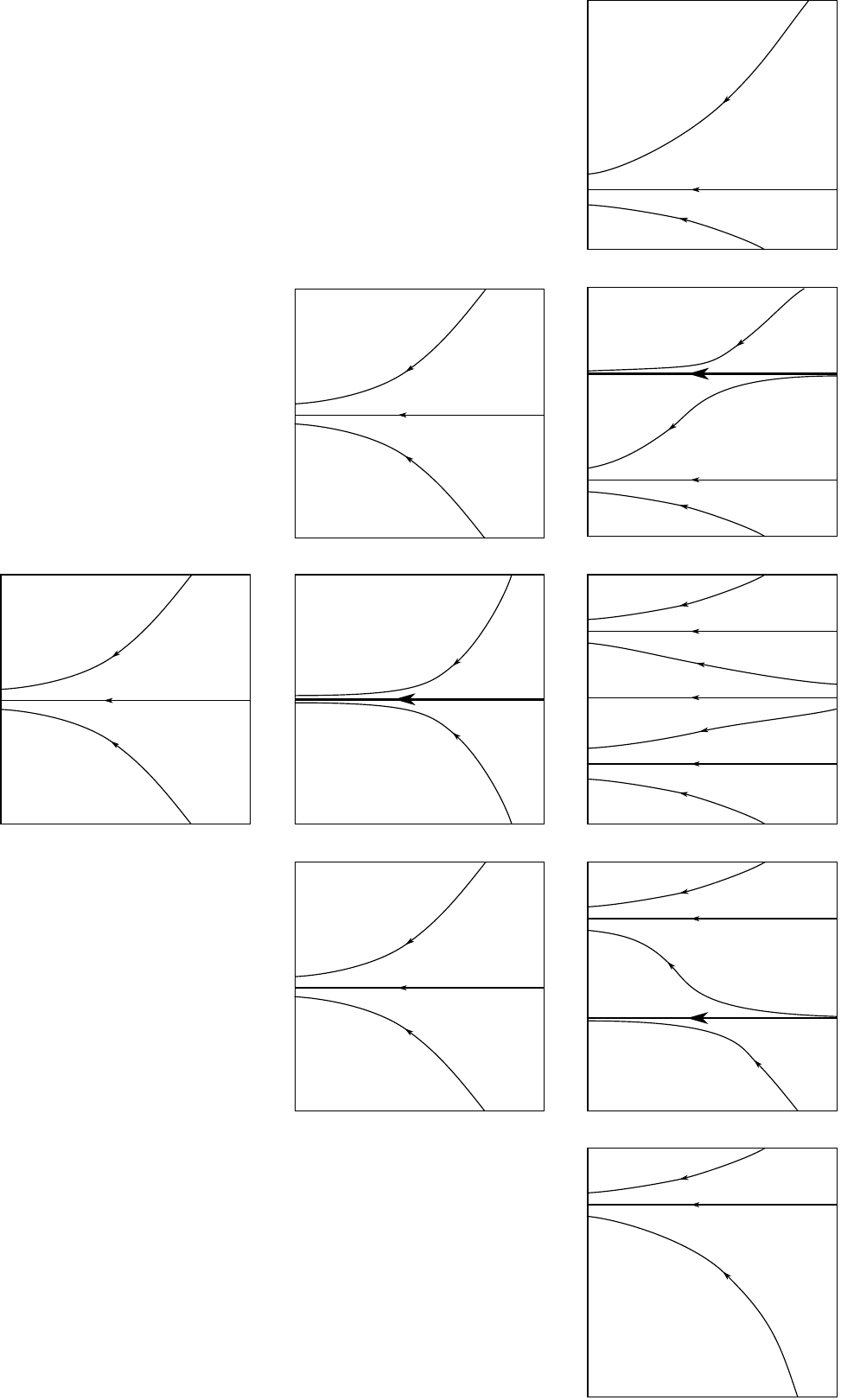}
	\end{center}
	\caption{Foliations corresponding to the strata of Figure \ref{fig:sigma2dl}. 
	Left and right of each square are glued to get an annulus. Thick closed leaves
	are the degenerate ones. The central picture corresponds to the annihilation
	of a birth and a death of non-degenerate closed leaves.}
	\label{fig:sigma2dlfoliations}
\end{figure}

\subsubsection*{Proof core}

We now prove Theorem \ref{thm:bennequin_giroux}.
Suppose there is some overtwisted disk in $(S \times [-1, 1], \xi)$.
Then there is some isotopy relative to the boundary bringing this disk onto the middle sphere
$S_0$. 
So this isotopy sends $\xi_0 = \xi$ to a contact structure $\xi_1$ such that
$S_0$ contains an overtwisted disk. Then it can be modified
in the same way genericity of convex surfaces is proved until $S_0$ is
$\xi_1$--convex and divided by a disconnected curve (use Corollary
\ref{cor:giroux_graph} to understand dividing sets here).
We can perturb $\xi_1$ to make sure it also prints a generic movie of
characteristic foliations and perturb the isotopy to be in the situation of the
preceding discussion on families of movies.

The set $\Omega$ of $(s, t)$ such that $S_t$ is $\xi_s$--convex 
is the disjoint union of $\Omega_c$ corresponding to connected dividing sets
and $\Omega_d$ corresponding to disconnected ones.

In addition, we know by construction that $\Omega_d$ intersects the
right vertical edge $\{s = 1\}$ so it is not empty. But it does not intersect
the left edge $\{s = 0\}$ by hypothesis of the theorem. 
More precisely, we can assume the closure of $\Omega_d$ does not meet $\{s =
0\}$ so the minimum $s_0$ of its projection to $[0, 1]$ is positive. Choose
$t_0$ such that $(s_0, t_0)$ is in the closure of $\Omega_d$.

The point $(s_0, t_0)$ cannot be in:
\begin{itemize}
\item 
$\Sigma^1$ because the later is transverse to the $t$ direction so components of
$\Omega$ adjacent to a point $(s,  t)$ in $\Sigma^1$ project to neighborhoods of
$s$
\item
$\Sigma^2_\text{sc}$ because each point $(s, t)$ in $\Sigma^2_\text{sc}$ adheres
to only one component of $\Sigma^1_\text{sc}$ so the intersection between
$\Omega$ and a small disc around $(s, t)$ is connected and projects to a
neighborhood of $s$.
\item
$\Sigma^2_\text{dl}$ because all components of $\Omega$ touching
$\Sigma^2_\text{dl}$ are in $\Omega_d$ because the corresponding foliations have
closed leaves.
\item
any point $\Sigma^{11}$ involving degenerate closed leaves, again because 
strata in $\Sigma^1_\text{dl}$ are transverse to the $t$-direction and indicate
birth or death of stable closed leaves giving disconnected dividing sets.
\end{itemize}

The only configuration which really needs to be carefully ruled out is that of
points in $\Sigma^{11}$ involving only $\Sigma^1_\text{sc}$ like in figure
\ref{fig:bif_crossing_bennequin}
\begin{figure}[htp]
	\begin{center}
		\includegraphics{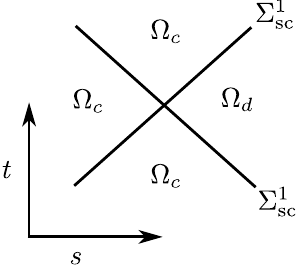}
	\end{center}
	\caption{The situation we must rule out for Bennequin's theorem}
	\label{fig:bif_crossing_bennequin}
\end{figure}
In this situation $\xi_{s_0} S_{t_0}$ has two retrograde saddle connections
which happen on different surfaces $S_t$ for $s$ in a punctured neighborhood of
$s_0$ and get swapped when $s$ goes through $s_0$, as in Figure
\ref{fig:sigma11sc}.  Note that characteristic foliations around $(s_0, t_0)$
have no closed leaf and we can also assume they do not have other saddle
connections that the ones we explicitly study.

To $\xi_s S_t$ we associate the oriented graph $\Gamma_+(s, t)$ (resp.
$\Gamma^-(s, t)$) whose vertices are
positive nodes and edges are the stable separatrices of positive
saddles (resp. negative saddles).  Since we do not have any closed leaf or
degenerate singularities near $(s_0, t_0)$, $\Gamma_+$ coincides as a set with
$G_+$ from definition \ref{def:giroux_graph} and $\Gamma^-$ is somehow dual to
$G_-$.  So, according to Corollary \ref{cor:giroux_graph}, when $S_t$ is
$\xi_s$--convex, there is a regular neighborhood of $\Gamma_+(s, t)$ whose
boundary divides $\xi_s S_t$.
Because $S$ is a sphere, we then get that $(s, t)$ is in $\Omega_c$ if and only
if $\Gamma_+(s, t)$ is a tree (ie a closed connected and simply connected
graph). 
We want to use the crossing lemma to understand how the graph changes when a
retrograde saddle connection happens, see Figure~\ref{fig:csr_arc}. 
\begin{figure}[htp]
	\begin{center}
		\includegraphics{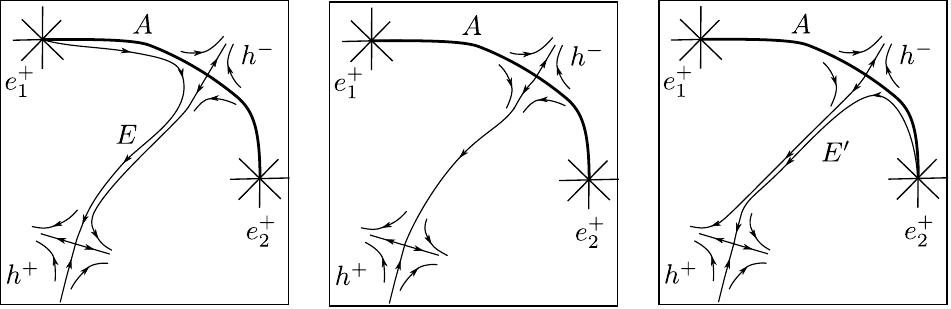}
	\end{center}
	\caption{Anatomy of a retrograde saddle connection}
	\label{fig:csr_arc}
\end{figure}

First we remark that, if we focus on a sufficiently small neighborhood of
$(s_0, t_0)$ in parameter space, the graph $\Gamma^-(s,t)$ deforms by isotopy
so we can assume it does not depend on $s$ and $t$. The same is true for
$\Gamma_+(s, t)$ as long as we stay in the complement of $\Sigma$.  Suppose now there
is a saddle connection involving a negative saddle $h^-$.  Let $A$ be the
closure of the union of its stable separatrices. The unstable separatrix of
$h^-$ entering the saddle connection coorients $A$ and, together with the
orientation of $S$, this orients $A$. We denote by $o(A)$ and $d(A)$ the origin
and destination of $A$.

During a bifurcation, exactly one edge $E$ of $\Gamma_+$ changes.
After the bifurcation, the edge $E$ is replaced by an edge $A(E)$ which is
obtained from the concatenation of $E$ and $A$ by a small push towards the
right which makes it avoid $o(A)$, see Figure \ref{fig:csr_arc_schematic} which also
explains how these things will be drawn schematically in the following.
\begin{figure}[htp]
	\begin{center}
		\includegraphics{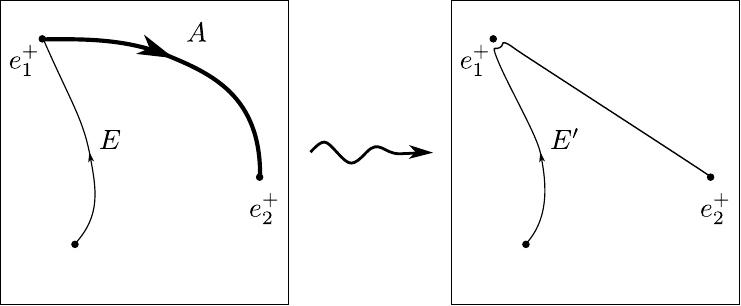}
	\end{center}
	\caption{A schematic view of the same retrograde saddle connection as in
	Figure \ref{fig:csr_arc}}
	\label{fig:csr_arc_schematic}
\end{figure}
Note that the edge $E$ is the edge which is immediately to the right of $A$ 
at $o(A)$ with respect to the cyclic ordering of edges of
$\Gamma_+ \cup \Gamma^-$ incident to $o(A)$.
So the oriented arc $A$ completely describes the bifurcation. 
We will denote by $A(\Gamma_+)$ the graph obtained from
$\Gamma_+$ after a bifurcation described by $A$ (up to isotopy). 

Returning to the codimension 2 bifurcation at $(s_0, t_0)$ we have two distinct
strata $\Sigma^1_\text{sc}(A_1)$ and $\Sigma^1_\text{sc}(A_2)$
corresponding to distinct (oriented) bifurcation arcs $A_1$ and $A_2$, see
Figure~\ref{fig:regions}.
\begin{figure}[ht]
	\begin{center}
		\includegraphics{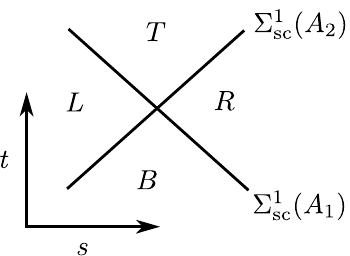}
	\end{center}
	\caption{Regions in the parameter space.}
	\label{fig:regions}
\end{figure}
We take the graph $\Gamma_+$ of the Bottom region as a reference and apply to it
the following proposition. Note that, on a tree, any ordered pair of vertices
determines a unique oriented segment.

\begin{proposition}
\label{prop:trees}
Suppose $\Gamma$ is a tree and $A_1$ and $A_2$ are bifurcation arcs for
$\Gamma$. The following properties are equivalent.
\begin{enumerate}
\item 
$A_1(\Gamma)$ is not a tree but $A_2(A_1(\Gamma))$ is a tree.

\item
On $\Gamma$, the oriented segment $S$ from $d(A_2)$ to $d(A_1)$ contains, in
that order: $d(A_2) \leq o(A_1) < o(A_2) \leq d(A_1)$ and, furthermore,
$S$ is immediately to the right of $A_1$ at $o(A_1)$ and $A_2$ at
$o(A_2)$.
\end{enumerate}
\end{proposition}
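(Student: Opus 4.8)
The plan is to forget the contact geometry entirely and prove Proposition~\ref{prop:trees} as a statement about edge slides on a tree. Recall from the discussion preceding the statement that a bifurcation arc $A$ acts on a graph by a single \emph{edge slide}: writing $E(A)$ for the edge incident to $o(A)$ that sits immediately to the right of $A$, and $w(A)$ for its other endpoint, the operation $A$ deletes $E(A)$ and inserts the edge $A(E)$ joining $w(A)$ to $d(A)$. The first step is to record the elementary criterion governing one slide. Since $A(\Gamma)$ has the same number of vertices and of edges as the tree $\Gamma$, it is a tree if and only if it is connected. Deleting $E(A)$ splits $\Gamma$ into the component $T_{o}$ of $o(A)$ and the component $T_{w}$ of $w(A)$; reinserting the edge $w(A)\,d(A)$ reconnects them precisely when $d(A)\in T_{o}$. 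Hence $A(\Gamma)$ \emph{fails} to be a tree exactly when $d(A)\in T_{w}$, i.e.\ when $E(A)$ is the first edge of the $\Gamma$-geodesic from $o(A)$ to $d(A)$; in that case $A(\Gamma)$ is the disjoint union of the subtree $T_{o}$ and a unicyclic graph on $T_{w}$ whose unique cycle $\mathcal{C}$ is the geodesic from $w(A)$ to $d(A)$ closed up by $A(E)$.

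For the implication $(2)\Rightarrow(1)$ I would run this criterion twice. Assuming the positions of $(2)$, the clause that $S$ leaves $o(A_{1})$ immediately to the right of $A_{1}$ says that $E(A_{1})$ is the initial edge of the geodesic $o(A_{1})\to d(A_{1})$ (the relevant sub-segment of $S$), so by the criterion $A_{1}(\Gamma)$ is not a tree and its cycle $\mathcal{C}$ is the part of $S$ from $w(A_{1})$ to $d(A_{1})$ together with the new edge; the ordering $d(A_{2})\le o(A_{1})<o(A_{2})\le d(A_{1})$ then places $o(A_{2})$ on this cycle and $d(A_{2})$ inside the cut-off subtree $T_{o}$. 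Applying $A_{2}$ to $A_{1}(\Gamma)$, the edge $E(A_{2})$ lies on $\mathcal{C}$, so deleting it breaks the cycle without disconnecting the unicyclic part, while the new edge $w(A_{2})\,d(A_{2})$ joins that part to $T_{o}$; the result is connected and acyclic, hence a tree, which is exactly $(1)$.

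The substance is the converse $(1)\Rightarrow(2)$. Starting from $(1)$, the first clause already forces, by the single-slide criterion, that $E(A_{1})$ begins the geodesic $o(A_{1})\to d(A_{1})$ and that $A_{1}(\Gamma)$ is the union of $T_{o}$ and a unicyclic graph with cycle $\mathcal{C}$. The key point is to show that the only way a single further slide $A_{2}$ can restore a tree is the one above: I would argue by cases on the location of the edge removed by $A_{2}$ in $A_{1}(\Gamma)$, noting that adding one edge can never destroy a cycle, so $A_{2}$ must delete an edge \emph{on} $\mathcal{C}$ (otherwise the cycle survives, or one creates three components that a single edge cannot reunite), and its new edge must bridge $T_{o}$ to the rest. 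Reading these two requirements back through the description of $\mathcal{C}$ yields $o(A_{1})<o(A_{2})\le d(A_{1})$ (because the removed edge sits at $o(A_{2})$ on the cycle) and $d(A_{2})\le o(A_{1})$ (because $d(A_{2})$ must lie in $T_{o}$), together with the two ``immediately to the right'' clauses. I expect the main obstacle to be exactly this bookkeeping of the cyclic order of edges around $o(A_{1})$ and $o(A_{2})$: one must check that the edge $A_{2}$ slides in $A_{1}(\Gamma)$ agrees with $E(A_{2})$ computed in $\Gamma$ except in the degenerate positions $d(A_{2})=o(A_{1})$ and $o(A_{2})=d(A_{1})$ permitted by the non-strict inequalities, handle those boundary cases by hand, and confirm that $o(A_{1})$ and $o(A_{2})$ are genuinely distinct so that the middle inequality is strict.
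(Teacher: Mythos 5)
Your reduction to edge slides is exactly the route the paper takes: the single-slide criterion, the splitting of $A_1(\Gamma)$ into a tree $\Gamma_1$ (your $T_o$) and a unicyclic graph whose cycle $C$ is the closed-up geodesic from $v=w(A_1)$ to $d(A_1)$, and the connectivity count forcing the edge slid by $A_2$ onto $C$ and $d(A_2)$ into $\Gamma_1$, all appear there almost verbatim. Your direction $(2)\Rightarrow(1)$ is essentially complete; the boundary cases you worry about are handled in the paper by noting that in $A_1(\Gamma)$ the edge moved by $A_2$ is either an edge of $S$ or the new edge $A_1(E)$, and \emph{both} lie on $C$, so the cycle is destroyed and the new edge bridges to $\Gamma_1$ in every case.

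In $(1)\Rightarrow(2)$, however, the step you defer as ``bookkeeping of the cyclic order'' is a genuine gap, and it cannot be closed by local bookkeeping at $o(A_1)$ and $o(A_2)$: you must rule out that the edge slid by $A_2$ in $A_1(\Gamma)$ is the new edge $A_1(E)$ itself, and that possibility is perfectly consistent with condition 1 at the combinatorial level. Indeed, take $o(A_2)=d(A_1)$ (allowed by the non-strict inequality) and suppose $A_2$ arrives wedged so that $A_1(E)$ sits immediately to its right: deleting $A_1(E)$ leaves the two trees $\Gamma_1\sqcup\Gamma_2$, and the new edge from $v$ to $d(A_2)\in\Gamma_1$ reconnects them, so $A_2(A_1(\Gamma))$ is again a tree while the clause ``$S$ is immediately to the right of $A_2$ at $o(A_2)$'' fails. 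What excludes this is global planarity, not cyclic order at the two vertices: if the edge immediately to the right of $A_2$ at $o(A_2)$ were $A_1(E)$, the arc $A_2$ would start into the disk bounded by the Jordan curve $C$ that does not contain $\Gamma_1$, and since bifurcation arcs are leaves of the characteristic foliation, hence disjoint from the separatrix graph, $A_2$ could never escape that disk to reach $d(A_2)\in\Gamma_1$. This Schönflies argument (which the paper flags explicitly) is the one place where the ambient sphere, rather than the abstract tree with cyclic orders, enters; your proof needs it spelled out, together with the easy observation that $o(A_2)$ lies on $C$ while $o(A_1)$ does not, which settles the strictness of the middle inequality you left open.
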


Note that condition 1 above holds if $\Gamma$ is the tree $\Gamma_+$ coming from
the Bottom region $B$ since we assume $T$ and $B$ are in $\Omega_c$ while $R$ is
in $\Omega_d$.  This proposition concludes the proof of
Theorem~\ref{thm:bennequin_giroux} because condition 2 above is symmetric in
$A_1$ and $A_2$ (here one should not forget that exchanging $A_1$ and $A_2$ will
reverse the orientation on $S$). So the graph $A_2(\Gamma)$ corresponding to the
left region $L$ is not a tree and $L$ is also in $\Omega_d$.

\begin{proof}
We first prove that property 1 implies property 2.
Let $E$ be the edge of $\Gamma$ modified by $A_1$. In particular 
$E$ has vertices $o(A_1)$ and some other vertex $v$ and $E$ is immediately to
the right of $A_1$ at $o(A_1)$. Because $\Gamma$ is a tree, $v$ can't be the
same as $o(A_1)$ and (the closure of) $\Gamma \setminus E$ is the disjoint
union of two trees $\Gamma_1$ containing $o(A_1)$ and $\Gamma_2$ containing
$v$, see Figure~\ref{fig:trees}.

\begin{figure}[htp]
	\begin{center}
		\includegraphics[width=12cm]{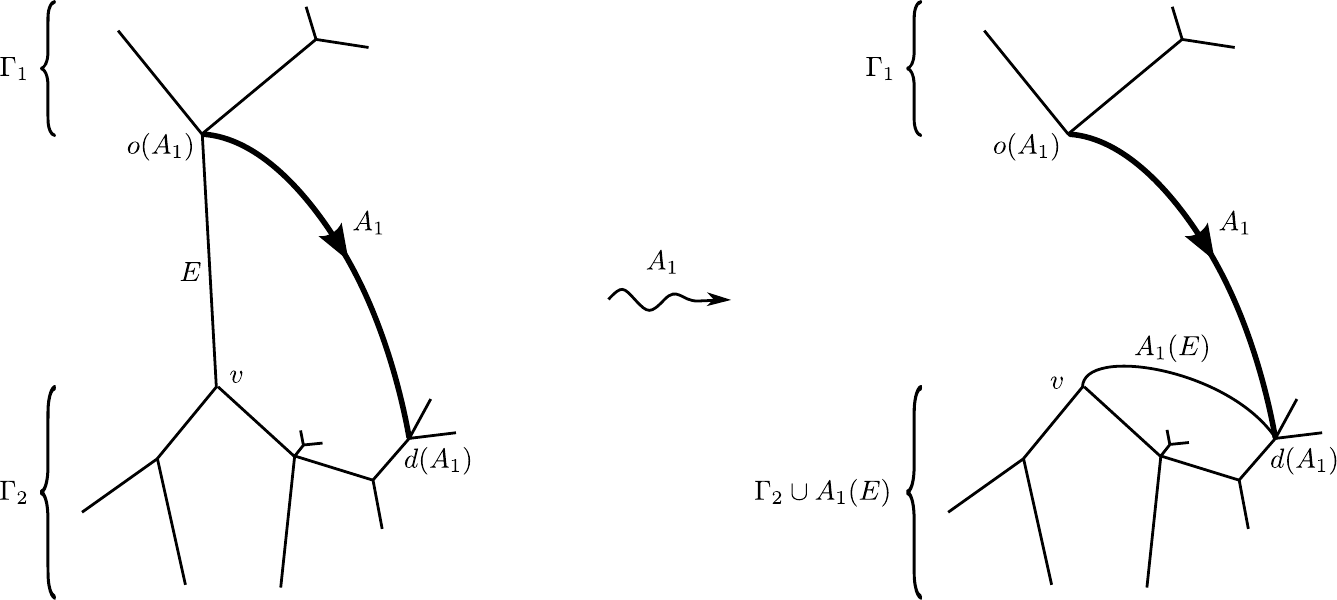}
	\end{center}
	\caption{Trees and graphs in the proof of Propostion~\ref{prop:trees}.}
	\label{fig:trees}
\end{figure}

Note that $d(A_1)$ cannot be in $\Gamma_1$ since otherwise $A_1(E)$ would go
from $\Gamma_1$ to $\Gamma_2$ and $A_1(\Gamma)$ would be a tree.

So $d(A_1)$ is in $\Gamma_2$ and this implies that $v$ in the segment 
$[o(A_1), d(A_1)] \subset \Gamma$. Also we learn that $A_1(\Gamma)$ is the
disjoint union of the tree $\Gamma_1$ and the graph $\Gamma_2 \cup A_1(E)$
which contains exactly one cycle $C$. This cycle contains $A_1(E)$ and its
vertices are all in $[v, d(A_1)] \subset \Gamma$, see Figure~\ref{fig:trees}
again.

Since $A_2(A_1(\Gamma))$ is a tree, the edge $E'$ modified by $A_2$ in $A_1(\Gamma)$
belongs to $C$ otherwise $C$ would persist in $A_2(A_1(\Gamma))$.
So we get that $o(A_2)$ is in $C$ (in particular it can't be the same as
$o(A_1)$). In addition $d(A_2)$ is in $\Gamma_1$ otherwise
$A_2(A_1(\Gamma))$ would stay disconnected. The last thing to check is that
$E'$ is part of the segment $[d(A_2), d(A_1)] \subset \Gamma$. The only edge of
$C$ which is not in this segment is $A_1(E)$. Remember $E'$ is immediately to
the right of $A_2$ at $o(A_2)$ so it cannot be $A_1(E)$ because that would
force $A_2$ to go into the disk bounded by $C$ which does not
contain $\Gamma_1$ (surreptitiously using Schönflies theorem again).

We now prove the converse implication. Since $S$ is immediately to the right of
$A_1$ at $o(A_1)$, it contains the edge $E$ of
$\Gamma$ moved by $A_1$. More precisely, $E$ is in the segment 
$[o(A_1), d(A_1)] \subset \Gamma$. So $A_1(\Gamma)$ is the disjoint union of a
tree $\Gamma_1$ and a graph $\Gamma_2$ containing a unique cycle $C$.
Since $S$ is immediately to the right of $A_2$ at
$o(A_2)$ and $o(A_1) \neq o(A_2)$, the edge $E'$ in $A_1(\Gamma)$ moved by
$A_2$ is either an edge in $S$ or $A_1(E)$. In both cases, it is contained in
$C$. So the cycle $C$ does not persist in $A_2(A_1(\Gamma))$ and $A_2(E_1)$
connects $\Gamma_2 \setminus E'$ to $\Gamma_1$. Hence $A_2(A_1(\Gamma))$ is a
tree.
\end{proof}

Now this proof is finished let us see where we used the contact condition and
not only properties of generic families of foliations with two parameters. The
first thing is that $\Sigma^1$ is transverse to the $t$ direction because of the
bifurcation lemmas. A second more subtle point is that the crossing lemma says
more: it tells the direction of the bifurcations: separatrices turn to their
right when $t$ increases. Figure \ref{fig:contre_ex} show how the above proof
would fail if $A_1$ and $A_2$ were allowed to act as switches in opposite
direction. In that figure one sees an example of the bad situation of Figure
\ref{fig:bif_crossing_bennequin}.
\begin{figure}[htp]
	\begin{center}
		\includegraphics[width=11.5cm]{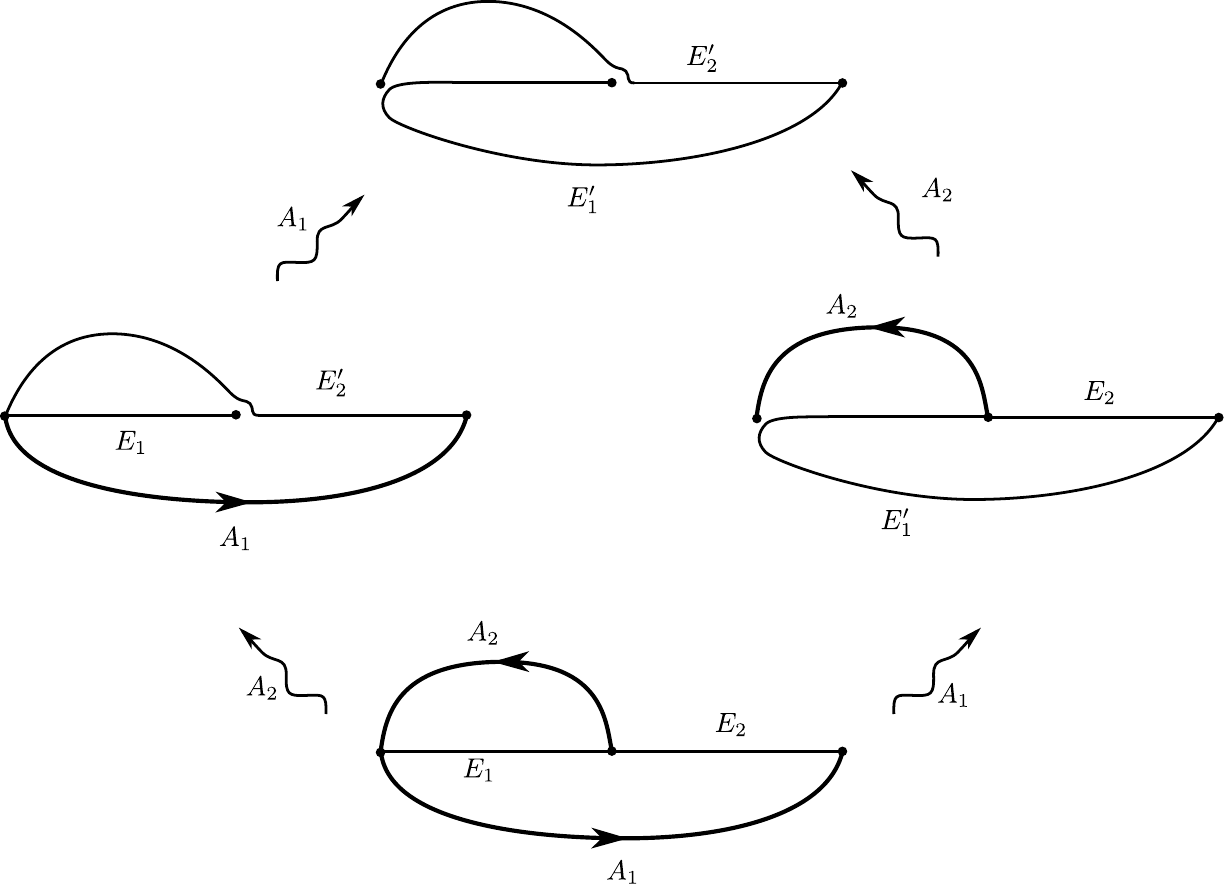}
	\end{center}
	\caption{How the discussion would fail if $A_1$ were reversed. In this example
	the reference graph has three vertices and two edges. Regions $L$, $T$ and $B$
	are tight whereas $R$ is overtwisted.}
	\label{fig:contre_ex}
\end{figure}
The explanation is that, if we assume that the bifurcation corresponding to 
$A_1$ acts in the wrong direction then, in Proposition~\ref{prop:trees}, we
must replace ``to the right of $A_1$'' by  ``to the left of $A_1$''
and we loose symmetry between $A_1$ of $A_2$. Of course if both $A_1$ and $A_2$
act in the wrong direction then we do not have any difference, this simply 
corresponds to considering negative tight contact structures on $\S^3$.

\begin{bibsection}
\begin{biblist}[\normalsize]
\bibselect{cours}
\end{biblist}
\end{bibsection}

\vspace{2cm}

\textsc{Université Paris Sud, 91405 Orsay, France}

\textit{Email adress: } \verb+patrick.massot@math.u-psud.fr+

\textit{URL: } \verb+www.math.u-psud.fr/~pmassot/+

\end{document}